\documentclass[preprint]{imsart}

\RequirePackage{amsthm,amsmath}
\RequirePackage[round]{natbib}
\RequirePackage[colorlinks,citecolor=blue,urlcolor=blue]{hyperref}
\usepackage{amssymb}
\usepackage{graphicx}
\usepackage{amsfonts}
\usepackage{amsbsy}
\usepackage{mathrsfs}
\usepackage{lscape}
\usepackage{float}
\theoremstyle{plain}
\DeclareMathOperator{\sign}{sign}
\usepackage{multirow}
\usepackage{tablefootnote}
\usepackage{threeparttable}
\usepackage{pdflscape}
\usepackage{enumerate}
\usepackage{pdfpages}
\usepackage{soul}
\usepackage{verbatim}

% will be filled by editor:
%\doi{10.1214/154957804100000000}
%\pubyear{0000}
%\volume{0}
%\firstpage{1}
%\lastpage{0}
%\arxiv{}

% put your definitions there:
\startlocaldefs
\numberwithin{equation}{section}
\theoremstyle{plain}
\newtheorem{theorem}{Theorem}
\usepackage[english]{babel}
\newtheorem{lemma}{Lemma}

\endlocaldefs

%\graphicspath{{.}{./Figure/}}

\newcommand{\bx}{\mathbf{x}}
\newcommand{\bX}{\mathbf{X}}
\newcommand{\bY}{\mathbf{Y}}
\newcommand{\by}{\mathbf{y}}
\newcommand{\bZ}{\mathbf{z}}
\newcommand{\bS}{\mathbf{s}}
\newcommand{\bt}{\mathbf{t}}
\newcommand{\bd}{\mathbf{d}}
\newcommand{\bH}{\mathbf{H}}
\newcommand{\bV}{\mathbf{V}}
\newcommand{\bbeta}{\boldsymbol{\beta}}
\newcommand{\bpsi}{\boldsymbol{\psi}}
\newcommand{\btheta}{\boldsymbol{\theta}}
\newcommand{\bTheta}{\boldsymbol{\Theta}}

\newcommand{\bSigma}{\boldsymbol{\Sigma}}

\newcommand{\EE}{\mathbb{E}}
\newcommand{\Var}{\mathrm{Var}}

\graphicspath{{../Figures/}}

\usepackage{xcolor}

\begin{document}

\begin{frontmatter}

%\title{High-dimensional inference for inhomogeneous Gibbs point processes}% with a diverging number of covariates}
\title{Inference for possibly high-dimensional inhomogeneous Gibbs point processes}% with a diverging number of covariates}
\runtitle{High-dimensional inference for Gibbs models}

\begin{aug}
%\author[A]{\fnms{Isma\"ila} \snm{Ba}\thanksref{t1}\ead[label=e1]{ba.ismaila@courrier.uqam.ca}} 
\author[A]{\fnms{Isma\"ila} \snm{Ba}\thanksref{t1}\ead[label=e1]{ba.ismaila@courrier.uqam.ca}}
\and
%\author{\fnms{Jean-Fran\c cois} \snm{Coeurjolly}\thanksref{t1}\ead[label=e2]{coeurjolly.jean-francois@uqam.ca}}
\author[B,C]{\fnms{Jean-Fran\c cois} \snm{Coeurjolly}\thanksref{t1}}
\ead[label=e2]{jean-francois.coeurjolly@univ-grenoble-alpes.fr}
\ead[label=e3]{coeurjolly.jean-francois@uqam.ca}
%\affiliation{UQAM, Canada}

%\address[label=a1]{Department of Mathematics \\ Universit\'e du Qu\'ebec \`a Montr\'eal (UQAM) \\ Canada\\ 
 %\printead{e1} \\
%\printead{e2}}

\address[A]{Department of Mathematics,
Universit\'e du Qu\'ebec \`a Montr\'eal (UQAM), \\ Canada,
\printead{e1}}

\address[B]{Laboratory Jean Kuntzmann, Department DATA,
Université Grenoble Alpes, \\ France,
\printead{e2}}

\address[C]{Department of Mathematics,
Universit\'e du Qu\'ebec \`a Montr\'eal (UQAM), \\ Canada,
\printead{e3}}

\thankstext{t1}{Natural Sciences and Engineering Research Council of Canada}
\runauthor{Ba and Coeurjolly}

\end{aug}

\begin{abstract}
\; Gibbs point processes (GPPs) constitute a large and flexible class of spatial point processes with explicit dependence between the points. They can model attractive as well as repulsive point patterns. Feature selection procedures are an important topic in high-dimensional statistical modeling. In this paper, composite likelihood approach regularized with convex and non-convex penalty functions is proposed to handle statistical inference for possibly high-dimensional inhomogeneous GPPs. The composite likelihood incorporates both the pseudo-likelihood and the logistic composite likelihood. We particularly investigate the setting where the number of covariates diverges as the domain of observation increases. Under some conditions provided on the spatial GPP and on the penalty functions, we show that the oracle property, the consistency and the asymptotic normality hold. Our results also cover the low-dimensional case which fills a large gap in the literature. Through simulation experiments, we validate our theoretical results and finally, an application to a tropical forestry dataset illustrates the use of the proposed approach.
\end{abstract}

\begin{keyword}[class=MSC]
\kwd[Primary]{\,62H11, 60G55}
\kwd{}
\kwd[secondary]{\,62J07, 65C60, 97K80}
\end{keyword}

\begin{keyword}
\kwd{Gibbs point process}
\kwd{high dimensional regression}
\kwd{composite likelihood}
%\kwd{logistic regression likelihood}
\kwd{regularization method}
\kwd{feature selection}
\end{keyword}

% history:
%\received{\smonth{1} \syear{0000}}

%\tableofcontents

\end{frontmatter}

\section{Introduction}
Spatial point patterns are datasets containing the random locations of events or objects within a spatial domain. These datasets arise in a broad range of applications, for instance in modeling locations of trees in a forest, locations of disease cases in a region, etc \cite[see e.g.][]{moller2003statistical,illian2008statistical,baddeley2015spatial}. The stochastic models generating such datasets are called spatial point processes, with the spatial Poisson point process as the reference model. This model is the natural candidate for modeling independent random structures, i.e. independent locations of points in space with no interaction. In practice, we may observe random patterns where the points interact between them. There exists several models which are suitable for modeling these dependent random structures. Gibbs point processes (GPPs) are for instance one of them. 
In a nutshell, GPPs are defined, in a bounded domain, by a density with respect to the Poisson process. 
This makes this class particularly large, flexible and attractive. It can model homogeneous or inhomogeneous, clustered  or regular  patterns~\citep{jensen:stougaard:2001,dereudre2017consistency,dereudre2019introduction}.

One way of characterizing spatial point processes is through intensity functions. The $k$th order intensity function at $u_1,\dots,u_k$ can be interpreted as the local probability to observe a point at pairwise distinct points $u_1,\dots,u_k$ \citep{coeurjolly2017tutorial}. Given a Gibbs model, such intensity functions are (in general) not available in a closed form, essentially due to the normalizing constant involved in its density expression (see Section~\ref{sec:background} for more details). Thus, even for simple models, the expected number of points in a bounded domain has no explicit expression.

A reasonable way to overcome this issue is to define a quantity which does not involve the normalizing constant and which characterizes in some ways the GPP. A suitable candidate which presents these two features is the Papangelou conditional intensity \citep{dereudre2019introduction}, denoted here by $\lambda$. For a GPP $\bX$ and a location $u \in \mathbb{R}^d$, $\lambda(u,\bX) \mathrm{d}u$ can be interpreted as the conditional probability to observe a point in a ball with volume $\mathrm{d}u$ around $u$ given the rest of the configuration agrees with $\bX$ \citep{coeurjolly2017tutorial}. From a practical point of view,  we may suspect that the Papangelou conditional intensity depends on spatial covariates, which can be environmental conditions, topological attributes, soil characteristics, etc. In the present study, we assume that the Papangelou conditional intensity is a loglinear form of the parameters  \cite[see][]{jensen:stougaard:2001,daniel2018penalized}:
\begin{align}
\label{intensity function}
\lambda_{\btheta}(u,\bX)&=\exp(\bbeta^\top \bZ(u) + \bpsi^\top \bS(u,\bX)), \quad u \in W \subseteq \mathbb{R}^d
\end{align}
where $\btheta=(\bpsi^\top, \bbeta^\top)^\top \in \mathbb{R}^{p} $ is a parameter vector to be estimated, $d$ represents the state space of the spatial GPP $\bX$ (usually $d=2,3$) and $W$ is the observation window. We assume that $\bpsi$ is a real $l$-dimensional parameter and $\bbeta$ a real $q$-dimensional parameter, so that $l+q=p$. In this setting, the $q$ spatial covariates measured at coordinate $u$, $\bZ(u) = \{ z_1(u),\cdots,z_{q}(u) \}^\top$, describe the spatial inhomogeneity and the covariates effects; and $\bS(u,\bX) = \{ s_1(u,\bX),\cdots,s_l(u,\bX) \}^\top$ correspond to interaction terms (see Section~\ref{sec:examples} for specific examples). 
% Note that the conditional intensity defined in (\ref{intensity function}) belongs to the exponential models. This family of models makes the inference procedures much faster and it's a common assumption made on the conditional intensity or the density of the GPP models when investigating the asymptotic properties of the estimation methods. \newline

Likelihood inference is a standard method in parametric estimation. Due to the normalizing constant defining the density of a GPP, the likelihood function is intractable, which makes difficult, from a computational point of view, to fit Gibbs models via maximum likelihood. 
From a theoretical point of view, similar complexities appear. Even for simple stationary models, \citet{dereudre2017consistency} show that very little is known. Consistency of the maximum likelihood can be established only for some stationary models and no central limit theorem is available. 

Alternatives to the likelihood method include the pseudo-likelihood \citep{besag1978some,jensen1991pseudolikelihood,jensen1994asymptotic} and the logistic composite likelihood~\citep{baddeley2014logistic}, two methods which can be implemented very efficiently~\citep{baddeley2015spatial} when the number of parameters is moderate, i.e. when $p$ is small. Asymptotic results for parameters estimates from these methods are also well-known in the homogeneous case, that is for models which cannot include spatial covariates (see Section~\ref{sec:pseudo-likelihood}). No asymptotic result is available when $q \geq 1$ i.e. when the model is inhomogeneous.
%%% pas sûr que ce soit intéressant pour l'intro.
%The implementation of the latter methods are simple, it can be done using standard software for generalized linear models. More precisely, they are implemented by using respectively a weighted quasi-poisson regression and a logistic regression. These facts are exploited in the $\textsf{ppm}$ function in the \textsf{spatstat R}~\citep{baddeley2015spatial} package with option \textsf{method=``mpl"} for the pseudo-likelihod and \textsf{method=``logi"} for the logistic composite likelihood.  \newline

With the large number of covariates that spatial pattern data may contain, an important question concerns the spatial features to include in the final model for estimating the parameters. It then becomes inevitable to develop a technique that can perform covariate selection and parameter estimation simultaneously which can be done in adding a penalty function to the objective function i.e., the pseudo-likelihood or the logistic composite likelihood in our setting. Regularization technique is a recent topic in the context of spatial point processes \cite[see e.g.][]{thurman2014variable,thurman2015regularized,yue2015variable,choiruddin2018convex} and has not much been investigated for spatial GPPs in particular. \cite{rajala2018detecting} and  \cite{daniel2018penalized} consider this problem. However, only convex penalties are considered and no theoretical result is provided to guarantee these procedures. 

The aim of this paper is to address regularization method for inhomogeneous GPPs via penalized composite likelihood (and in particular for the pseudo-likelihood) in a more complete fashion.
%%% pas vraiment nous cela:
%From a theoretical point of view, we provide general conditions on the spatial GPP to ensure its existence. These conditions yield a framework, which accommodates standard inhomogeneous Gibbs models, e.g. pairwise-interaction models, area-interaction models, etc. 
We provide conditions on the inhomogeneous Gibbs model (assumptions on covariates, form of the interaction terms, etc) on the penalty function to obtain sparsity, consistency and asymptotic normality for the regularized pseudo-likelihood estimator. The results are established in the increasing domain setting.
When $p$ is small and no regularization is used, consistency and asymptotic normality also hold, which fills an important gap in the literature see Theorem~\ref{thm:the4} in Section~\ref{sec:result}. \cite{billiot2008maximum,dereudre2009campbell,baddeley2014logistic,dereudre2017consistency} constitute the main references in the unregularized setting (see also  Section~\ref{subsec2} for more details) but all results from these references  are restricted to stationary/homogeneous Gibbs models. 
The present paper can be seen as an extension of all the previous references in the stationary case, of~\cite{choiruddin2018convex} to spatial GPPs, that is to the estimation of conditional intensities, of~\cite{rajala2018detecting} and \cite{daniel2018penalized} to non-convex penalties. Our results are in the same vein as the ones obtained by~\citet{fan2001variable} and~\citet{fan2004nonconcave}. We consider an increasing domain asymptotic framework~\cite[e.g.][]{book:981816} and we also consider the setting when the number of parameters, and in particular the number of spatial covariates grows with the sample size, which is in our situation the size of the observation domain.

GPPs are complex dependent processes. For instance even for very simple examples, we do not know if these processes are $\alpha$-mixing or not. Concentration inequalities for functional of GPP are therefore not deeply studied in the literature: \citet{reynaud,picard} propose such inequalities
 for inhomogeneous Poisson point processes while \citet{coeurjolly2015almost} considers stationary Gibbs models. Deriving concentration inequalities for inhomogeneous Gibbs point processes is, to our opinion, an open challenging probabilistic question. Since such inequalities are the starting point to derive finite-sample properties for the regularized pseudo-likelihood estimator, we have decided to focus in this paper on asymptotic properties instead of finite-sample ones.
%%% je propose de supprimer Tout cela a déjà plus ou moins déjà été fait, il vaut peut etre mieux ne pas parler de ça en intro
% In our setting, the implementation of the regularization method is quite simple; it consists to implement a penalized generalized linear models. More precisely, we need to combine the \textsf{spatstat R} package with the two \textsf{R} packages implementing fitting for generalized linear models regularized by convex and non-convex penalty functions respectively, \textsf{glmnet} \citep{friedman2010regularization} and \textsf{ncvreg} \citep{breheny2011coordinate}. 

The remainder of the paper is structured as follows. In Section~\ref{sec:gpps}, we present a brief formalism to define and characterize spatial GPPs. We also present a few examples. Methodologies to infer parametric GPPs are presented in Section~\ref{subsec2} where we, in particular, review unregularized and regularized pseudo-likelihood. Asymptotic properties are given in Section~\ref{sec:asy}. Section~\ref{sec:num} details the methodology while providing its numerical aspects and Section~\ref{sec:sim} presents a simulation study while Section~\ref{sec:data} is devoted to the application of the proposed method to a tropical forestry dataset. Discussion and conclusion follow in Section~\ref{sec:discussion}. Finally, proofs are postponed to Appendices~\ref{sec:auxLemma}-\ref{proof3}.

\section{Gibbs point processes (GPPs)}
\label{sec:gpps}

\subsection{Background and definitions} \label{sec:background}

We consider spatial point processes $\bX$ on $S \subseteq \mathbb R^d$ in this paper. We view $\bX$ as a locally finite subset of $S$. Thus two points cannot occur at the same location. We denote by $\bX_B= \bX\cap B$ the restriction of $\bX$ to a set $B \subseteq S$ and by $|B|$ the volume of a bounded Borel set of $\mathbb R^d$. Local finiteness of $\bX$ means that $\bX_B$ is finite almost surely, that is the number of points $N(B)=|\bX_B|$ is finite almost surely.
Let ${N}_{lf}$  be the space of locally finite configurations of  $\mathbb{R}^{d}$, that is
\[
{N}_{lf}=\{ \bx, |\bx_B| < \infty \; \mbox{for any bounded domain} \; B \subset \mathbb{R}^{d} \}.
\]

We now briefly remind the definition of GPPs. For a comprehensive presentation of GPPs as well as a detailed list of references, we refer to~\citet{dereudre2019introduction}. In this paper, we are interested in increasing domain asymptotic properties for some statistical inference to be detailed in Section~\ref{subsec2}. Thus, the point process must be defined on $\mathbb R^d$ and we assume to observe it on a sequence of observation domains that grow to $\mathbb R^d$. It is therefore important to properly define GPPs on $S=\mathbb R^d$. 

However, let us first consider the case $|S|<\infty$, as it is easier to understand and interpret. 
GPPs are characterized by an energy function $H$ (or Hamiltonian) that maps any finite point configuration to $\mathbb R \cup \{\infty\}$. Specifically, if $|S|<\infty$, a GPP on $S$ associated to $H$ and with activity $z>0$ admits the following density with respect to the unit rate Poisson process:
\begin{equation}\label{densityGibbs}
 f(\bx)  \propto  z^{|\bx|} e^{-H(\bx)}
\end{equation} 
where $\propto$ means ``proportional to''. By this, we highlight that, even for simple models, the constant is intractable. This definition makes sense under some regularity conditions on $H$, typically non degeneracy ($H(\emptyset)<\infty$) and stability (there exists $A\in\mathbb R$ such that $H(\bx)\geq A |\bx|$ for any $\bx \in N_{lf}$). Consequently, configurations $\bx$ having a small energy $H(\bx)$ are more likely to be generated from a  GPP than from a Poisson point process, and conversely for configurations having  a high energy. In the extreme case where $H(\bx)=\infty$, then $\bx$ cannot, almost surely, be the realization of a GPP associated to $H$.

Suppose that $f$ is hereditary, that is, $f(\bx) > 0 \Rightarrow f(\by) > 0$ for $\by \subset \bx$. In other words, the hereditary property means that an authorized configuration $\bx$ (in the sense that $f(\bx) > 0$) remains allowed if we take away one point. Then, one can define the Papangelou conditional intensity $\lambda(u,\bx)$ at any location $u \in S$ as follows:
\begin{equation}
\label{condint}
\lambda(u,\bx)=
\left\lbrace
\begin{array}{ccc}
f(\bx \cup u)/f(\bx)  & \mbox{for } & u \notin \bx \\
f(\bx)/f(\bx \setminus u) & \mbox{for} & u \in \bx 
\end{array}\right.
\end{equation}
with $a/0:=0$ for $a \geq 0$. Heuristically, the quantity $\lambda(u,\bx) \mathrm{d}u$ may be interpreted as the probability that has the process $\bX$ to insert a point in a region  $\mathrm{d}u$ around $u$ given the rest outside this infinitesimal set is $\bx$.

When $|S|=\infty$, the above definition \eqref{densityGibbs} of the density does not make sense in general since $H(\bx)$ can be infinite or even undefined if $|\bx|=\infty$. In this case a GPP is defined through its local specifications, which are the conditional densities on any bounded set $\Delta$, given the outside configuration on $\Delta^c$, with respect to the unit rate Poisson process on $\Delta$. These conditional densities take a similar form as in \eqref{densityGibbs}, where now the Hamiltonian $H$ becomes a family of Hamiltonian functions $H_\Delta$ that quantify the energy of $\bx_{\Delta}$ given the outside configuration $\bx_{\Delta^c}$. 
% Some supplementary regularity assumptions on the family of $H_\Delta$'s are necessary to ensure the existence of a point process  satisfying these local specifications. Again, we refer to~\citet{dereudre2019introduction} for more details. 
The interpretation nonetheless remains similar: a (infinite) GPP associated to $H_\Delta$ tends to favor configurations $\bx_\Delta$ on $\Delta$ having a small value $H_\Delta(\bx)$.
The interest of the Papangelou conditional intensity concept is that it still makes sense when $S=\mathbb{R}^{d}$. For more details, we refer the reader to \citet{daley2007introduction} and \citet{moller2003statistical}. 

Campbell theorem is a fundamental tool for general point processes which in particular is used to define intensity functions. The analog for GPPs is called the GNZ (for Georgii, Nguyen and Zessin) formula~\citep{xanh1979integral,georgii1979canonical,georgii2011gibbs}. GNZ equation is a way of characterizing the GPP and also highlights the interest of the Papangelou conditional intensity. It states that, for any measurable function $h : \mathbb R^d \times {N}_{lf} \to \mathbb{R}^{+}$
\begin{equation}
\label{gnz}
\EE \sum_{u \in \bX} h(u, \bX \setminus \{u\})  =  
 \EE \int    h(u ,\bX) \lambda(u, \bX)  \mathrm{d}u.
\end{equation}
Iterated versions GNZ formula are available (see e.g. \citet{dereudre2019introduction}).

We end this section with two definitions related to the Papangelou conditional intensity which often correspond to important expected properties. We say that a GPP has a finite range (FR) for some $R<\infty$, if for any $u\in S$, $\bx \in N_{lf}$
\begin{equation}
\label{fr}
 \lambda(u,\bx)=\lambda(u,\bx \cap B(u,R)) \tag{FR}.
\end{equation}
In other words, the probability to insert a point $u$ in $\bx$ only depends on the $R$-neighbors of $u$ to $\bx$. Finally, a GPP is said to be locally stable if the GPP is stochastically dominated by a Poisson point process, that is if there exists $\bar \lambda<\infty$ such that for any $u\in S$ and $\bx \in N_{lf}$
\begin{equation}
\label{ls}
 \lambda(u,\bx) \leq \bar \lambda \tag{LS}.
\end{equation}

\subsection{Examples of homogeneous models} \label{sec:examples}

As specified earlier, the interaction between points is encoded, when $|S|<\infty$, in the density or when $|S|\le \infty$ in the Papangelou conditional intensity. Parametric models can therefore be easily defined. We consider first parametric models with density of the form
\begin{equation}
	\label{eq:densityhomogtheta}
	f_{\boldsymbol \theta}(\bx) \propto \exp\left( \beta |\bx| + \boldsymbol\psi^\top \bS(\bx) \right)
\end{equation}
where $\btheta=(\boldsymbol\psi^\top,\beta)^\top$, $\beta\in \mathbb R$, $\boldsymbol\psi \in \mathbb R^l$ and $\bS: N_{lf}^l\to\mathbb R$ is a vector of interaction terms. The Papangelou conditional intensity writes
\begin{equation}
	\label{eq:papangelouhomogtheta}
\lambda_{\btheta}(u,\bx) = \exp\left(\beta + \boldsymbol\psi^\top \bS(u,\bx) \right)
\end{equation}
with $\bS(u,\bx)=\bS(\bx \cup u) -\bS(\bx)$.
Note that when $\boldsymbol\psi=0$, the model reduces to a homogeneous Poisson point process with intensity $\exp(\beta)$.
The models below, which extend to $\mathbb R^d$, are stationary, i.e. the distribution of $\bX$ is invariant under translations. In particular,  using the GNZ formula, it means that $\EE N(A) = \int_A \EE(\lambda_{\btheta}(u,\bx)) \mathrm d u = |A| \EE \lambda_{\btheta}(0,\bX)$. The average number of points in a bounded domain with volume $|A|$ does not depend on the location of this set $A$ (nor its shape). 

\begin{itemize}
\item Strauss model: this pairwise interaction model is defined by $l=1$, $\gamma=\exp \psi \in [0,1]$ and $s(\bx)=\sum_{u,v\in \bx}^{\neq} \mathbf 1(\|v-u\|\le R)$ for some $R<\infty$ or using the Papangelou conditional intensity $s(u,\bx)= \sum_{v\in \bx} \mathbf 1(\|v-u\|\le R)$ which represents the number of $R$-neighbors of $u$ in $\bx$.
When $\gamma=0$ the model is well-defined and corresponds to the Hard-core model with hard-core distance $R$ (two points at distance smaller than $R$ are forbidden). Strauss model is exclusively able to model repulsive patterns.
\item Geyer saturation model: it generalizes the Strauss model in the sense that it is defined for any value of the interaction parameter $\psi$ and  can model both clustering or inhibition. It is defined by $l=1$, $\psi\in \mathbb R$,  
$s(u,\bx)=s(\bx\cup u)-s(\bx)$ for any $u\in \mathbb R^d$ and $\bx\in N_{lf}$ with
% \begin{equation}
% \label{eq:interactiongeyer}	
$s(\bx) = \sum_{ u \in \bx} \mbox{min} \left (\sigma,\tau(u,R,\bx \setminus u) \right ) $
% \end{equation}
where $\sigma$ is a threshold parameter, $\tau(u,R,\bx \setminus u):= \sum_{ v \in \bx}^{u \neq v} \mathbf{1}(\Vert u - v \Vert \leq R)$ is the number of other points $v$ of $\bx$ lying within a distance $R<\infty$ of the point $u$. 
\end{itemize}
It is to be noticed that both models satisfy \eqref{fr} (with respective finite range parameters $R$ and 
$2R$) and~\eqref{ls}. Many other examples are available in the literature (see e.g. \citet{moller2003statistical,illian2008statistical}): piecewise Strauss models, pairwise interaction models with infinite range such as the Lennard-Jones model, higher order interaction models such as the area interaction model, etc.

\subsection{Examples and existence of inhomogeneous Gibbs models}

The originality of this paper is to focus on inhomogeneous Gibbs models. There are several ways of introducing inhomogeneity, non stationarity, anisotropy, etc (see \citet{jensen:stougaard:2001} for a review).
We consider, here, inhomogeneous models that can be explained using spatial covariates. Namely, we focus on models with Papangelou condition intensities of the loglinear form
\begin{equation}
\label{intensity function}
\lambda_{\btheta}(u,\bx) =\exp \Big \{ \bbeta^\top \bZ(u) + \bpsi^\top \bS(u,\bx) \Big \}
\end{equation}
for $u\in \mathbb R^d$ and $\bx \in N_{lf}$, where now $\btheta=(\bpsi^\top, \bbeta^\top)^\top \in \mathbb{R}^{p} $. We still assume that $\bpsi$ is a real $l$-dimensional parameter and $\bbeta$ a real $q$-dimensional parameter, so that $l+q=p$. In this setting, the $q$ spatial covariates measured at coordinate $u$, $\bZ(u) = \{ z_1(u),\cdots,z_{q}(u) \}^\top$, describe the spatial inhomogeneity, covariates effects and $\bS(u,\bx) = \{ s_1(u,\bx),\cdots,s_l(u,\bx) \}^\top$ remain interaction terms. Inhomogeneous Strauss  or Geyer models for instance can thus be straightforwardly proposed. From the previous section, if the covariates are assumed to be bounded (more formalized later by condition ($\mathcal C$.\ref{C:cov})), then inhomogeneous versions of Strauss and Geyer models for instance also satisfy \eqref{fr}-\eqref{ls}. 
Figure~\ref{fig:models} depicts simulated realizations of homogeneous and inhomogeneous  Strauss and Geyer saturation models. 
% The first column of Figure \ref{fig:models} concerns the Strauss process with $\gamma=0.2$ while the Geyer saturation process with $\sigma=1$ and $\gamma=1.5$ is reported in the second column. For both models, the interaction distance is $R=10$ and the observation domain is $W=[0,1000] \times [0,500]$. Note that the inhomogeneous models correspond to the true models (for $\gamma=0.2$ and $\gamma=1.5$) of our simulation study (see section \ref{sim:setup} for more details).}

\begin{figure}[!ht]
\begin{center}
\setlength{\tabcolsep}{0pt}
\renewcommand{\arraystretch}{0}
\hspace*{-1cm}\begin{tabular}{c c}
\includegraphics[scale=.45]{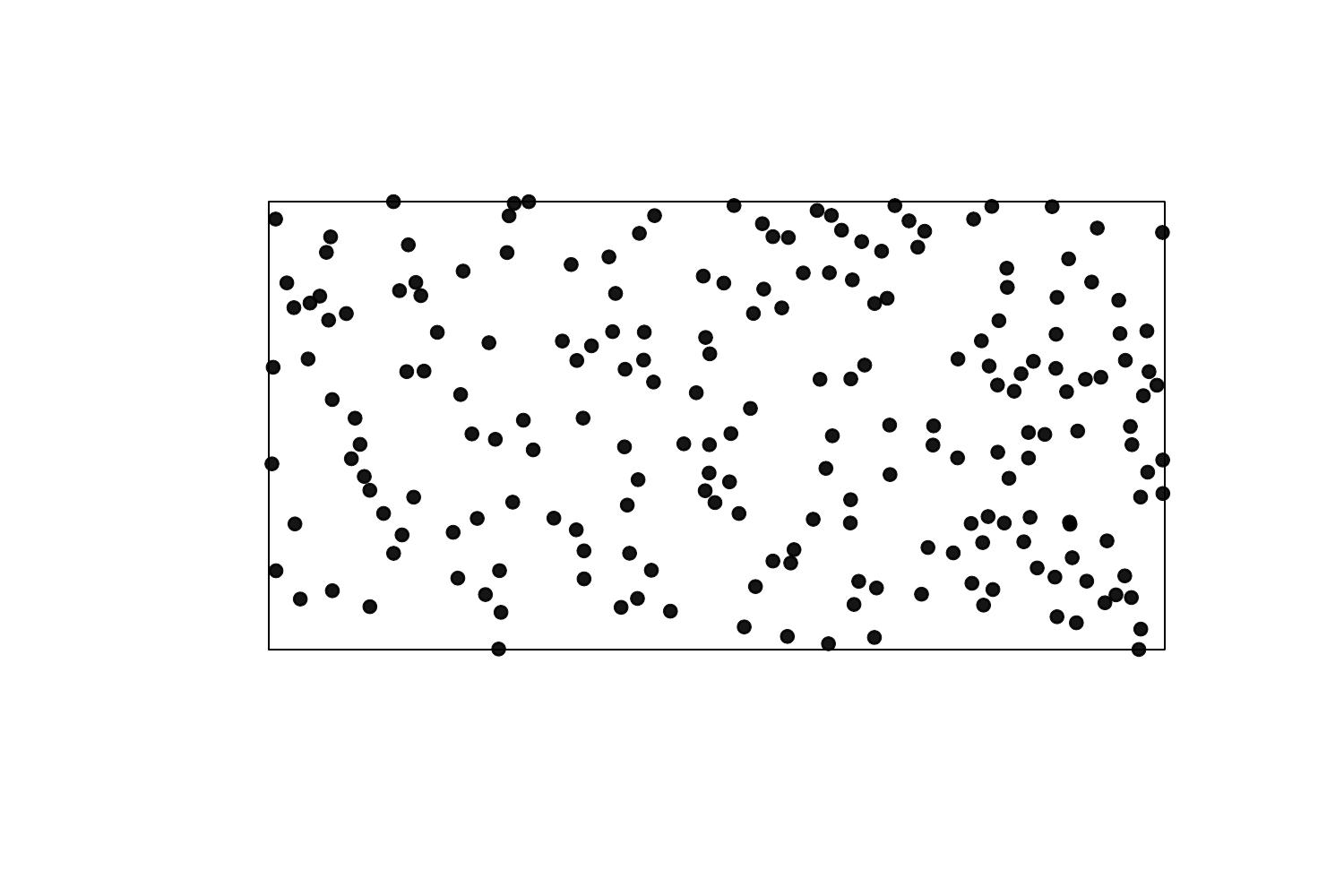} & \includegraphics[scale=.45]{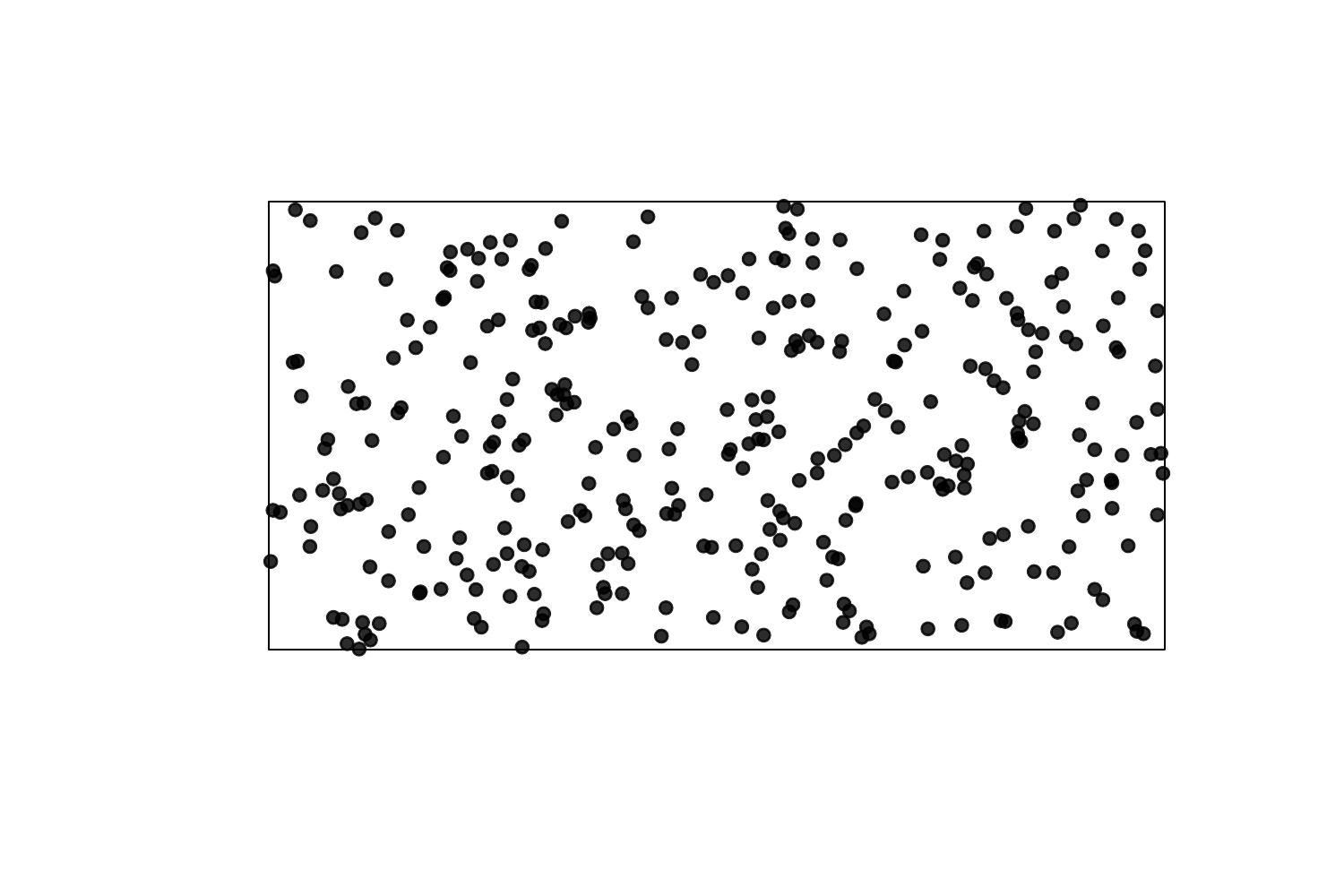}\\[-.9cm]
\includegraphics[scale=.45]{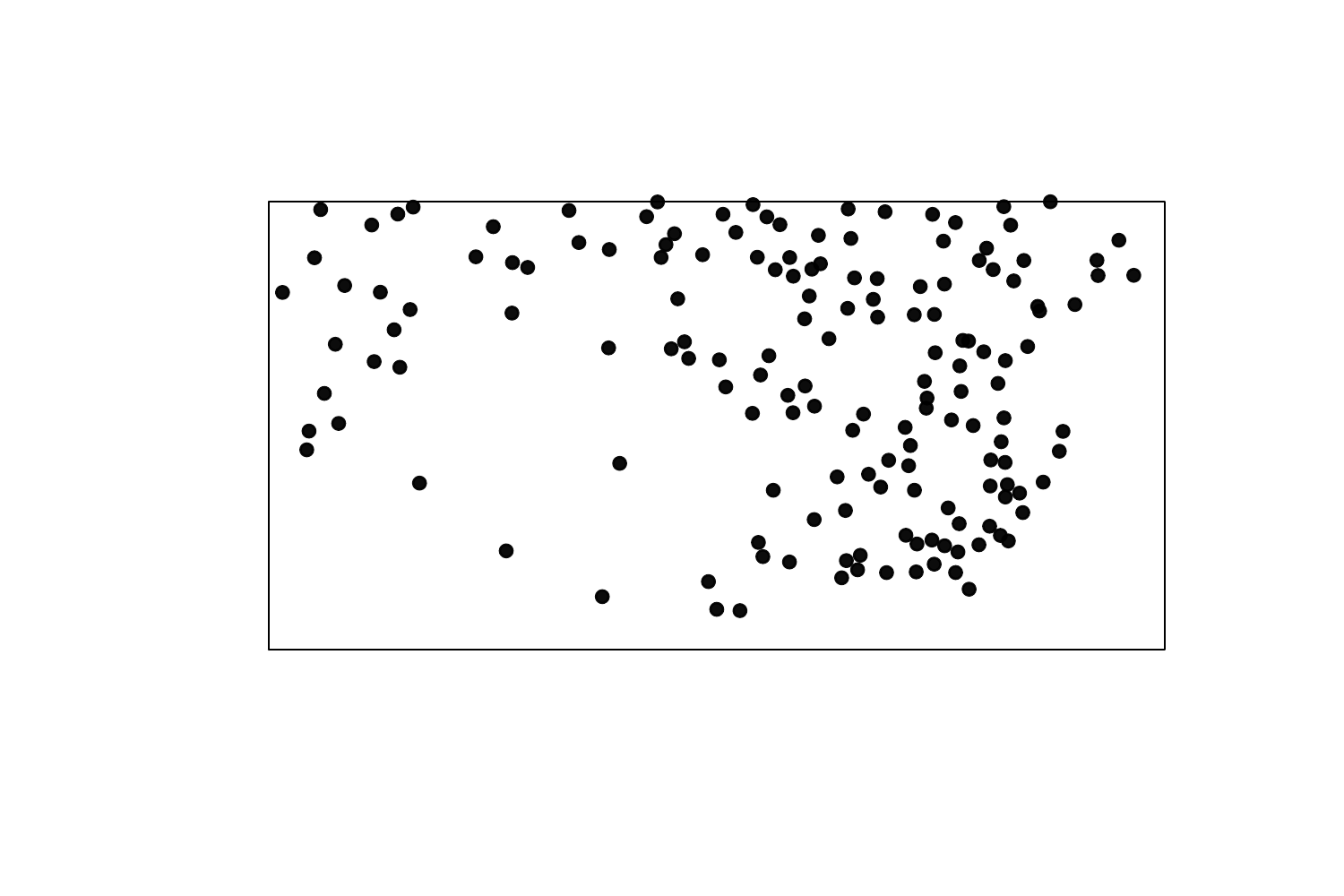} & \includegraphics[scale=.45]{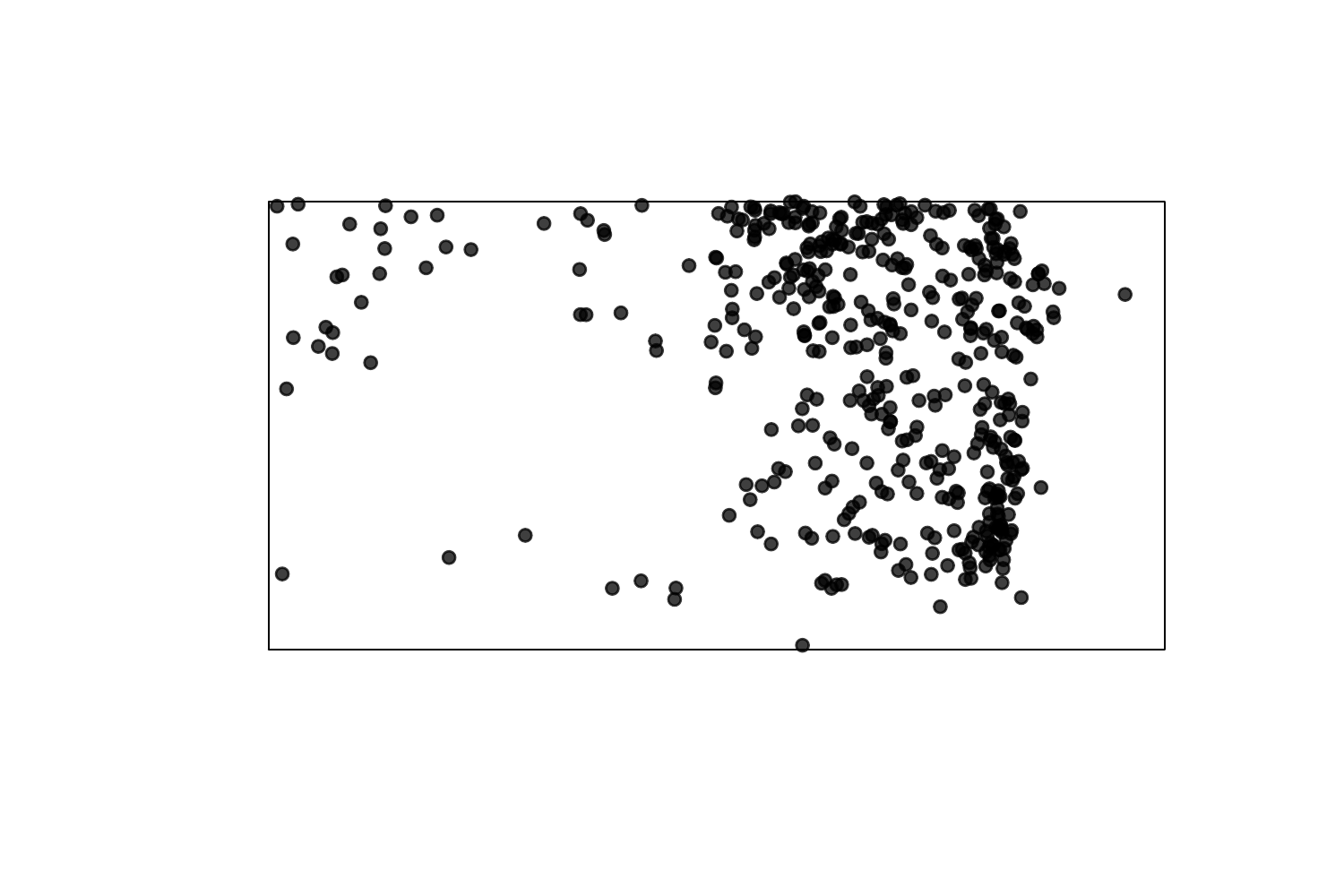}
\end{tabular}
\caption{Realizations of Strauss process with parameters $\gamma=0.1$ and $R=15$ (column 1), 
Geyer saturation process with $\sigma=1$, $\gamma=1.5$ and $R=15$ (column 2),  in the spatial domain $W=[0,1000] \times [0,500]$. Homogeneous models with parameter $\beta_0$ are considered in the top row while bottom row is concerned with inhomogeneous models with $\bbeta^top=(\beta_0,2,.75)$. The two spatial covariates correspond to elevation and slope of elevation (see Section~\ref{sec:sim}); $\beta_0$ is set such that the average number of points under the Poisson case (i.e. when $\gamma=1$) in this domain equals $400$.}
\label{fig:models}
\end{center}
\end{figure}

As mentioned in Section~\ref{sec:background}, our statistical objective raises the probabilistic question: given a function $\lambda:\mathbb R^d\times N_{lf}\to \mathbb R^+$ (eventually parametric), does there exist a Gibbs model with Papangelou conditional intensity $\lambda$? Such a question has generated an important literature in the homogeneous case. The most recent result can be found in~\citet{dereudre2012existence} and proves the existence for many interaction models including homogeneous Strauss  and Geyer models. Surprisingly, this fundamental probabilistic problem has never been considered in the literature in the non-stationary case. A recent draft by~\citet{vasseur:dereudre:20} establishes the following useful result, which provides sufficient conditions to ensure the existence.

 \begin{theorem}{\citet[Theorem~1]{vasseur:dereudre:20}}
\label{existence}
Let $\lambda : \mathbb{R}^{d} \times {N}_{lf} \to  \mathbb{R}^{+}$, assume the assumptions \eqref{fr}-\eqref{ls} hold, then there exists at least one infinite volume Gibbs measure, that is there exists at least one Gibbs model $\bX$, with Papangelou conditional intensity $\lambda$ which in particular satisfies the GNZ equation~\eqref{gnz}.
\end{theorem}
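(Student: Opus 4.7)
The plan is to build an infinite-volume measure as a weak limit of finite-volume Gibbs measures with empty boundary conditions and then check that the limiting measure satisfies the GNZ equation \eqref{gnz} with the prescribed $\lambda$. This is the classical compactness strategy, but it is made workable here by the two assumptions \eqref{fr} and \eqref{ls}: local stability supplies the needed uniform control through stochastic domination, while the finite range property allows the GNZ identity to be localized so that passage to the limit is meaningful.

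First, on each bounded box $\Lambda_n=[-n,n]^d$ I would define a finite-volume Gibbs measure $P_n$ on $N_{lf}$, having density (with respect to the unit-rate Poisson process on $\Lambda_n$, and with empty configuration outside $\Lambda_n$) proportional to $\exp(-H_{\Lambda_n}(\bx))$, where $H_{\Lambda_n}$ is the Hamiltonian reconstructed from $\lambda$ via the cocycle/telescoping identity that $\lambda$ satisfies as a valid Papangelou conditional intensity. The iterated GNZ formula together with \eqref{ls} then shows, by the standard Georgii--Kuneth type argument, that $P_n$ is stochastically dominated by a homogeneous Poisson point process of intensity $\bar\lambda$ on $\mathbb{R}^d$, uniformly in $n$. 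This domination gives uniform moment bounds on $N(B)$ for every bounded $B$, and hence tightness of $(P_n)_{n\geq 1}$ for the topology of local weak convergence on $N_{lf}$. Extract a subsequence $P_{n_k}\rightharpoonup P$.

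Next I would verify that $P$ satisfies the GNZ equation for $\lambda$. Fix a non-negative measurable test function $h$ of the form $h(u,\bx)=\varphi(u)\psi(\bx\cap B(u,R'))$ with $\varphi$ bounded and compactly supported in some set $B$, and $\psi$ bounded and continuous for the local topology. By \eqref{fr}, the integrand $h(u,\bX)\lambda(u,\bX)$ depends only on $\bX \cap (B\oplus B(0,R+R'))$, so for $n_k$ large enough that this enlarged set lies inside $\Lambda_{n_k}$, the measure $P_{n_k}$ is a genuine Gibbs specification for $\lambda$ on that window and therefore
\begin{equation*}
\EE_{P_{n_k}} \sum_{u\in \bX} h(u,\bX\setminus u) = \EE_{P_{n_k}} \int_{\mathbb{R}^d} h(u,\bX)\lambda(u,\bX)\,\mathrm d u.
\end{equation*}
The right-hand side is a bounded continuous local functional of $\bx$, using \eqref{ls} to bound $\lambda$ by $\bar\lambda$, so it converges to $\EE_{P}\int h(u,\bX)\lambda(u,\bX)\,\mathrm d u$ by weak convergence. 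The left-hand side is a sum over points of a local continuous functional multiplied by an indicator of compact support in $u$, and convergence again follows from weak convergence combined with the uniform integrability supplied by the Poisson domination. A monotone-class argument then extends the identity to all non-negative measurable $h$, proving that $P$ satisfies the GNZ equation with Papangelou conditional intensity $\lambda$.

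The delicate step is the passage to the limit in the Campbell-type sum on the left-hand side, because this sum is not continuous for the topology of local weak convergence. The key is to combine the truncation allowed by the compact support of $\varphi$, the local-boundedness of the counts provided by stochastic Poisson domination, and the continuity of $\psi$ on local neighborhoods; this is where both \eqref{fr} and \eqref{ls} are used simultaneously. Everything else---tightness, construction of $P_n$, and the final monotone-class extension---is routine once these two structural conditions are in force.
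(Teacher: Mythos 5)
First, a point of order: the paper does not prove this statement. Theorem~\ref{existence} is imported verbatim from \citet{vasseur:dereudre:20}, so there is no internal proof to compare yours against; I can only assess your sketch on its own terms.

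Your overall strategy --- finite-volume Gibbs measures with empty boundary condition, uniform stochastic domination by a Poisson process via \eqref{ls}, tightness, extraction of a limit point, and verification of a localized GNZ identity via \eqref{fr} --- is the classical and essentially correct route, and you identify correctly where each hypothesis enters. A minor remark is that, as stated, the theorem omits the cocycle condition on $\lambda$ needed to reconstruct the Hamiltonians $H_{\Lambda_n}$; you silently assume it, which is the right reading, but it should be made explicit. The genuine gap is in the limit passage. You assert that $\bx\mapsto\int h(u,\bx)\lambda(u,\bx)\,\mathrm{d}u$ is a bounded \emph{continuous} local functional, but \eqref{ls} only delivers boundedness: $\lambda(u,\cdot)$ is merely measurable, and for the very models this theorem is meant to cover (Strauss, Geyer) it is genuinely discontinuous for the local/vague topology on configurations, since indicators $\mathbf 1(\|v-u\|\le R)$ jump when a point crosses the sphere of radius $R$. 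Local weak convergence therefore does not yield convergence of $\EE_{P_{n_k}}\int h\,\lambda\,\mathrm{d}u$, and the a.e.-continuity mapping theorem cannot be invoked without prior knowledge of the limit $P$. The same difficulty affects the left-hand side beyond the unboundedness you do address (discontinuity at configurations with points on the boundary of the support of $\varphi$). The standard repair is to abandon Prokhorov-type tightness altogether and obtain compactness in the topology of local convergence --- which tests against bounded \emph{measurable} local functions --- from a uniform bound on specific entropy; that machinery is the real content of Dereudre-type existence proofs and is delicate precisely in the non-stationary setting this theorem addresses, so it cannot be dismissed as routine.
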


In the rest of the paper, we assume that $\bX$ is a GPP with parametric Papangelou conditional intensity function~\eqref{intensity function} satisfying~\eqref{fr}-\eqref{ls}. By Theorem~\ref{existence}, the model is well-defined and the notation $\mathbb E$ or $\Var$ are expectation and variance with respect to this Gibbs measure.

\section{Parametric estimation of the Papangelou conditional intensity}
\label{subsec2}

In this section, we present methodologies to estimate parametric Gibbs models given by a Papangelou conditional intensity $\lambda_{\btheta}(\cdot,\cdot)$.
We present the pseudo-likelihood, known as the most standard alternative  to the maximum likelihood method. Then, we present regularized or penalized versions which are able to handle high-dimensional problems, that is situations when $p$ is large. We assume to observe a single realization of a point process $\bX$ defined on $\mathbb R^d$ and observed on a bounded domain $W$.

\subsection{Pseudo-likelihood} \label{sec:pseudo-likelihood}

The maximum likelihood method is rarely used in the literature. The density is known upto a constant which is not explicit and which must be estimated at each step of the optimization procedure. This makes such a method very computationally expensive even if $p$ is very small.

The standard alternative is to maximize the pseudo-likelihood function~\citep{besag1978some,jensen1991pseudolikelihood}, which is a ``modified" version of the likelihood function and does not involve the normalizing constant. We may encounter boundary effects problem when computing the conditional intensity at some points. Indeed, $\lambda_{\btheta}(u,\bx)$  may depend on unobserved points of $\bx$ lying outside the observation window $W$ for points $u$ close to the edge of $W$. In such a situation, we need some edge corrections. To handle this problem, 
we take advantage of the finite range property~\eqref{fr} and
use the minus sampling $D=W\ominus R$, i.e. the observation window $W$ eroded by the finite interaction range $R$ of the GPP. Now, for any $u \in D \cup (\bX\cap D)$, $\lambda_{\btheta}(u,\bx)$ can be observed without error using the observation of $\bX$ on $W$.
For a Gibbs model with Papangelou conditional intensity $\lambda_{\btheta}(\cdot,\cdot)$, the log-pseudo-likelihood function in $D$ is defined by (see \citet{jensen1991pseudolikelihood})
\begin{align}
\mbox{LPL}( \bX;\btheta) &=  {\sum_{u \in \bX \cap D} \log\lambda_{\btheta}(u, \bX)} - {\int_D \lambda_{\btheta}(u,\bX)\mathrm{d}u}. \label{ch2:Pois}    
\end{align}
Other edge effect corrections are known in the literature (e.g. isotropic correction, translation correction, etc.) but are not considered in this paper for the sake of simplicity~\cite[see e.g.][]{ripley1991statistical}. \\ 
The first and second derivatives of the log-pseudo-likelihood function, for models given by~\eqref{intensity function}, are respectively
\begin{align*}
\mathbf{LPL}^{(1)}( \bX;\btheta) = & {\sum_{u \in \bX \cap D} \mathbf{t}(u,\bX \setminus u)} - {\int_D \bt(u,\bX)\lambda_{\btheta}(u,\bX)\mathrm{d}u}, \\
\mathbf{LPL}^{(2)}( \bX;\btheta) = & - {\int_D \bt(u,\bX)\bt(u,\bX)^\top \lambda_{\btheta}(u,\bX)\mathrm{d}u}
\end{align*}
where $ \bt(u,\bx)=(\bS(u,\bx)^\top,\bZ(u)^\top)^\top $ for $u \in \mathbb{R}^d$ and $\bx \in \boldsymbol{N}_{lf}$.
The score of the log-pseudo-likelihood function (\ref{ch2:Pois}), $\mathbf{LPL}^{(1)}( \bX;\btheta)$, is an unbiased estimating equation (this ensues from GNZ equation, see e.g. (\ref{gnz})). 
The estimator obtained from the maximization of (\ref{ch2:Pois}) is called the maximum pseudo-likelihood estimator. Well-desired asymptotic properties of this estimator have been shown  theoretically for stationary Gibbs models. In the case of stationary exponential models, \citet{jensen1991pseudolikelihood,mase1995consistency,jensen1994asymptotic,billiot2008maximum} show the consistency and asymptotic normality of the maximum pseudo-likelihood estimator. \citet{dereudre2009campbell} extend the results to non-hereditary GPPs while \citet{coeurjolly2010asymptotic,coeurjolly2017parametric} to finite range stationary non-exponential models and infinite range stationary exponential models. 

A particular case of our main result developed in Section~\ref{sec:asy} is that we obtain asymptotic results for inhomogeneous Gibbs models.

\subsection{Regularization via penalized pseudo-likelihood}
\label{sec4} 

In an attempt to simultaneously select and estimate the components of the parameter vector $\btheta=(\bpsi^\top, \bbeta^\top)^\top$, we regularize the log-pseudo-likelihood function defined in (\ref{ch2:Pois}). That is, we maximize the penalized log-pseudo-likelihood function
\begin{align}
\label{eq:qwee}
	\mbox{Q}(\bX;\btheta)=\mbox{LPL}( \bX;\btheta) - |D|{\sum_{j=1}^{p} p_{\lambda_{j}}(|\theta_{j}|)},
\end{align}
where $|D|$ is the volume of the eroded observation domain, $\lambda_{j}$ is a nonnegative tuning parameter corresponding to $\theta_j$ for $j=1,\ldots,p$ and $p_{\lambda_j}$ is a penalty function which we now describe. For any $\lambda \geq 0$, we say that $p_\lambda(\cdot): \mathbb{R}^+ \to \mathbb{R}$ is a penalty function if $p_\lambda$ is a nonnegative function with $p_\lambda(0)=0$ and $p_{0}(.)=0$. In our study, we do not make penalizations on the parameter $\psi$, i.e. we set $\lambda_1=\cdots=\lambda_l=0$. Therefore, (\ref{eq:qwee}) becomes
\begin{align}
%\label{eq:qwee}
	\mbox{Q}(\bX;\btheta)=\mbox{LPL}( \bX;\btheta) - |D|{\sum_{j=l+1}^{p} p_{\lambda_{j}}(|\theta_{j}|)}. \nonumber
\end{align}
In the present study, we consider convex and non-convex penalty functions. The $\ell_1$ and $\ell_2$ norms are convex penalty functions while the Smoothly Clipped Absolute Deviation (SCAD)~\citep{fan2001variable} and the Minimax Concave (MC+)~\citep{zhang2010nearly} penalty functions are non-convex. Note that ``+" in MC+ means penalized linear unbiased selection (PLUS). From the two convex functions, other penalty functions can be constructed. For example, the Elastic net penalty function, which is a convex combination between the $\ell_1$ and $\ell_2$ norms. Below are the expressions of these penalty functions:
\begin{itemize}
\item $\ell_1$ norm:  $p_{\lambda}(\theta)= \lambda \theta$,
\item $\ell_2$ norm:  $p_{\lambda}(\theta)=\frac12 \lambda \theta^2$,
\item Elastic net: for $0 < \gamma < 1$, $p_{\lambda}(\theta)=\lambda \left \{ \gamma \theta + \frac12 (1-\gamma) \theta^2 \right \}$, 
\item SCAD: for any $\gamma > 2$, $p_{\lambda}(\theta)= \left\lbrace
\begin{array}{ccc}
\lambda \theta  & \mbox{if} & \theta \leq \lambda \\
\frac{\gamma \lambda \theta - \frac12(\theta^2+\lambda^2)}{\gamma - 1} & \mbox{if} & \lambda \leq \theta \leq \gamma \lambda \\
\frac{\lambda^2(\gamma^2-1)}{2(\gamma-1)} & \mbox{if} & \theta \geq \gamma \lambda,
\end{array}\right.$
\item MC+: for any $\gamma > 1$, $p_{\lambda}(\theta)= \left\lbrace
\begin{array}{ccc}
\lambda \theta - \frac{\theta^2}{2 \gamma} & \mbox{if} & \theta \leq \gamma \lambda \\
\frac12 \gamma \lambda^2& \mbox{if} & \lambda \leq \theta \leq \gamma \lambda.
\end{array}\right.$
\end{itemize}
These penalty functions give rise to well-known regularization methods. For example, the Lasso method ~\citep{tibshirani1996regression} is given by the $\ell_1$ norm penalty function. Furthermore, an adaptive version of these penalty functions might be useful in situations where we allow each direction to have a different regularization parameter. For more details on these adaptive techniques, we refer the reader to~\citet{zou2006adaptive} and~\citet{zou2009adaptive} for convex penalty functions.

\section{Asymptotic properties}
\label{sec:asy} 

Asymptotic properties of the regularized pseudo-likelihood estimator are presented in this section. We consider a sequence of observation windows $W=W_n$, $n=1,2,...$ which expands to $\mathbb{R}^{d}$ and define the sequence $D_n:=W_n \ominus R$, $n=1,2,...$ where $R$ is the interaction range of the inhomogeneous GPP. The parameter $\btheta=(\bpsi^\top, \bbeta^\top)^\top$ is now of dimension $p=p_n$ which may diverge as $n \to \infty$. Note that the parameter $\bpsi$ is still of dimension $l$ but the dimension of $\bbeta$ is now considered to be $q=q_n$, that is $p_n=l+q_n$.  In our setting, no selection is done on the parameter $\bpsi$. The results we propose are also valid for the unregularized pseudo-likelihood.  We denote by $\btheta_0=(\btheta_{01}^\top,\btheta_{02}^\top)^\top=(\btheta_{01}^\top,\mathbf{0}^\top)^\top$ the $p_n$-dimensional vector of true coefficients, where $\btheta_{01}\hspace{-.1cm} = \hspace{-.1cm} (\psi_{01},\cdots,\psi_{0l},\beta_{01},\cdots,\beta_{0s})^\top \hspace{-.15cm}= \hspace{-.12cm} (\theta_{01},\cdots,\theta_{0(l+s)})^\top$ is the $(l+s)$-dimensional vector of non zero coefficients and  $\btheta_{02}=(\beta_{0(s+1)},\cdots,\beta_{0q_n})^\top=(\theta_{0(l+s+1)},\cdots,\theta_{0p_n})^\top$ is the $(p_n-l-s)$-dimensional vector of zero coefficients. The number of non zero coefficents $m=l+s$ is assumed to be independent of $n$. Generally for any $\btheta \in \bTheta \subset \mathbb R^{p_n}$, we let $\btheta=(\btheta_{1}^\top,\btheta_{2}^\top)^\top$ where $\btheta_{1}$ and $\btheta_{2}$
are respectively of dimension $m$ and $p_n-m$. In this setting, we let 
\begin{align}
\label{lpln}
\mbox{LPL}_n( \bX;\btheta) &=  {\sum_{u \in \bX \cap D_n} \log\lambda_{\btheta}(u, \bX)} - {\int_{D_n} \lambda_{\btheta}(u,\bX)\mathrm{d}u},  
\end{align}
\begin{align}
\label{qn}
	\mbox{Q}_n(\bX;\btheta)=\mbox{LPL}_n( \bX;\btheta) - |D_n|{\sum_{j=l+1}^{p_n} p_{\lambda_{n,j}}(|\theta_{j}|)}
\end{align}
be respectively the log-pseudo-likelihood function and its penalized version, where now the regularization parameters $\lambda_{n,j}$, $j=l+1,\cdots,p_n$ depend on $n$.

\subsection{Notation and conditions} \label{sec:not}

We define the $p_n \times p_n$ stochastic matrices $\mathbf{A}_n(\bX;\btheta_{0})$ and $\mathbf{B}_n(\bX;\btheta_{0})$ 
by
\begin{align*}
\mathbf{A}_n(\bX;\btheta_{0})&={\int_{D_n} \bt(u,\bX)\bt(u,\bX)^\top \lambda_{\btheta_{0}}(u,\bX) \mathrm{d}u}, \\
\mathbf{B}_n(\bX;\btheta_{0})&= \hspace{-.12cm} {\int_{D_n} \hspace{-.12cm}  \int_{D_n} \hspace{-.25cm}  \bt(u,\bX)\bt(v,\bX)^\top (\lambda_{\btheta_{0}}(u,\bX)\lambda_{\btheta_{0}}(v,\bX)-\lambda_{\btheta_{0}}(\{u,v\},\bX))  \mathrm{d}v \mathrm{d}u} \\
& + {\int_{D_n} \int_{D_n} \Delta_v \bt(u,\bX) \Delta_u \bt(v,\bX)^\top \lambda_{\btheta_{0}}(\{u,v\},\bX) \mathrm{d}v \mathrm{d}u} 
\end{align*}
where the second-order Papangelou conditional intensity $\lambda_{\btheta_{0}}(\{u,v\},\bX)$ and the difference operator $\Delta_v$ are defined, for any $u,v \in \mathbb{R}^{d}$ and $\bx \in \boldsymbol{N}_{lf}$
\begin{align}
\lambda_{\btheta_{0}}(\{u,v\},\bx)&=\lambda_{\btheta_{0}}(u,\bx \cup v)\lambda_{\btheta_{0}}(v,\bx)=\lambda_{\btheta_{0}}(v,\bx \cup u)\lambda_{\btheta_{0}}(u,\bx),  \label{eq:papan2} \\
\Delta_v \bt(u,\bx)&:= \bt(u,\bx \cup v) - \bt(u,\bx).  \label{eq:diffop}
\end{align}
For a $p_n \times p_n$ stochastic matrix $\mathbf{H}_n(\bX;\btheta_{0})$, we let $\mathbf{H}_n(\btheta_{0})=\EE[\mathbf{H}_n(\bX;\btheta_{0})]$ and $\mathbf{H}_{n,11}(\bX;\btheta_{0})$ $(\mbox{resp. } \mathbf{H}_{n,11}(\btheta_{0}))$ be the $m \times m$ top-left corner of $\mathbf{H}_{n}(\bX;\btheta_{0})$ $(\mbox{resp. } \mathbf{H}_{n}(\btheta_{0}))$. The matrices $\mathbf{A}_n(\btheta_{0})$ and $\mathbf{A}_n(\btheta_{0}) + \mathbf{B}_n(\btheta_{0})$ are actually related to the sensitivity matrix and the variance of the score function, that is
\begin{align*}
\mathbf{A}_n(\btheta_{0})&= \EE \left[  -\frac{d}{d \btheta^\top} \; \mathbf{LPL}_n^{(1)}( \bX;\btheta_0)  \right],  \\
\mathbf{A}_n(\btheta_{0}) + \mathbf{B}_n(\btheta_{0})&=   \Var \left[ \mathbf{LPL}_n^{(1)}( \bX;\btheta_0)  \right].
\end{align*}
In what follows, for a squared symmetric matrix $\mathbf{M}_n$, $\nu_{\min}(\mathbf M_n)$ 
denotes  the smallest  eigenvalue of $\mathbf M_n$. Consider the following conditions ($\mathcal C$.\ref{C:Dn})-($\mathcal C$.\ref{C:plambda}) which are required to derive our asymptotic results:

\begin{enumerate}[($\mathcal C$.1)]
\item  $(D_n)_{n \geq 1}$ is an increasing sequence of convex compact sets, such that  $D_n \to \mathbb{R}^d$ as $n \to \infty$. \label{C:Dn}
\item We assume that the Papangelou conditional intensity function has the log-linear specification given by~\eqref{intensity function} where $\btheta \in \bTheta$ and $\bTheta$ is an open convex bounded set of $\mathbb R^{p_n}$, and where the statistics $s_j(u,\bx)$ $j=1,\cdots,l$ are such that for any $u \in \mathbb R^{d}$,  $\bx \in \boldsymbol{N}_{lf}$, there exists $R>0$ such that   $s_j(u,\bx)=s_j(u,\bx \cap B(u,R))$. \label{C:Theta}
\item  The covariates $\bZ$ and the interaction function $\bS$ satisfy
\[
	\sup_{n \geq 1}  \sup_{i=1,\cdots,q_n} \sup_{u \in \mathbb{R}^d} |z_{i}(u)|<\infty 
 	\quad \mbox{ and } \quad 
 	\mathbb{E}[ |s_j(u,\bX)|^{4}]<\infty
\]
for any $u \in D_n$ and $j=1,\ldots,l$. \label{C:cov}
\item  There exists $\bar{\lambda}_n: D_n \to \mathbb{R}^+$  such that  for any $u \in D_n$, $\bx \in \boldsymbol{N}_{lf}$ and $\btheta \in \bTheta$ $\lambda_{\btheta}(u,\bx) \leq \bar{\lambda}_n(u)$  with   $\sup_{n \geq 1}  \sup_{u \in \mathbb{R}^d} \bar{\lambda}_n(u)<\infty$. \label{C:locsta}
\item ${\displaystyle \liminf_{n\to \infty} \;  \nu_{\min}\big(|D_n|^{-1}\{\mathbf{A}_{n,11}(\btheta_0)+\mathbf{B}_{n,11}(\btheta_0)\} \big)>0}$. \label{C:BnCn}
%\item  ${\forall \, \bx \in \boldsymbol{N}_{lf}, \displaystyle\liminf_{n\to\infty}\; \nu_{\min}\big(|D_n|^{-1}\mathbf{A}_{n}(\bx;\btheta_0)\big)> 0}$. \label{C:An}
\item $\exists \, \mathscr{N}(\btheta_0)$ a neighborhood of $\btheta_0$ such that ${\forall \, \bx \in \boldsymbol{N}_{lf}, \, \btheta \in  \mathscr{N}(\btheta_0)}$ 
\[
\displaystyle\liminf_{n\to\infty}\; \nu_{\min}\big(|D_n|^{-1}\mathbf{A}_{n}(\bx;\btheta)\big)> 0.
\] \label{C:An}
\item The penalty function $p_{\lambda}(.)$ is nonnegative on $\mathbb R^{+}$, satisfies $p_\lambda(0)=0$, $p_0(.)=0$, and is continuously differentiable on $\mathbb R^+ \setminus\{0\}$ with derivative ${p}_\lambda'$ assumed to be a Lipschitz function on $\mathbb R^+\setminus\{0\}$.
Furthermore, given $(\lambda_{n,j})_{n \geq 1}, \mbox{ for } j=l+1, \ldots, l+s,$ we assume that there exists $(\tilde r_{n,j})_{n \geq 1}$, where $\tilde r_{n,j}\sqrt{|D_n|/p_n} \to \infty$ as $n \to \infty$, such that, for $n$ sufficiently large, $p_{\lambda_{n,j}}$ is thrice continuously differentiable in the ball centered at $|\theta_{0j}|$ with radius $\tilde r_{n,j}$ and we assume that the third derivative is uniformly bounded. \label{C:plambda}
%\item $p_n^3/|D_n|\to 0$ as $n\to\infty$. \label{C:pnDn}

\end{enumerate}

%Under the conditions ($\mathcal C$.\ref{C:plambda})-($\mathcal C$.\ref{C:pnDn}), we define the %sequences $a_n$, $b_n$ and $c_n$  by
Under condition ($\mathcal C$.\ref{C:plambda}) and $p_n/|D_n|\to 0$ as $n\to\infty$, we define the sequences $a_n$, $b_n$ and $c_n$  by
\begin{align}	
a_n &=\max_{j=l+1,\dots,l+s} | p'_{\lambda_{n,j}}(|\theta_{0j}|)| , \label{eq:an} \\
b_n &=\inf_{j=l+s+1,\ldots,p_n} \inf_{\substack{|\theta| \leq \epsilon_n \\ \theta \neq 0}} p'_{\lambda_{n,j}}(\theta) \label{eq:bn}, \mbox{ for } \epsilon_n=K_1\sqrt{\frac{p_n}{|D_n|}},	\\
c_n &=  \max_{j=l+1,\dots,l+s} |p^{\prime\prime}_{\lambda_{n,j}}(|\theta_{0j}|) |  \label{eq:cn}
\end{align}
where $K_1$ is any positive constant. 

\subsection{Main results} \label{sec:result}
We state our main results here. Proofs are relegated to Appendices~\ref{sec:auxLemma}-\ref{proof3}. We first show in Theorem~\ref{THM:ROOT} that the regularized pseudo-likelihood estimator converges in probability and exhibits its rate of convergence.

%\begin{theorem}
%\label{THM:ROOT}
%Assume the conditions ($\mathcal C$.\ref{C:Dn})-($\mathcal C$.\ref{C:locsta}) and ($\mathcal C$.\ref{C:An})-($\mathcal C$.\ref{C:pnDn}) hold. Let $a_n$ and $c_n$ be given by (\ref{eq:an}) and~\eqref{eq:cn}. If $a_n=O(|D_n|^{-1/2})$ and $c_n=o(1)$, then there exists a local maximizer $\hat \btheta$ of $Q_n(\bX;\btheta)$  such that  \linebreak ${\bf \| \hat \btheta -\btheta_0\|}=O_\mathrm{P}(\sqrt{p_n}(|D_n|^{-1/2}+a_n))$.
%\end{theorem}
\begin{theorem}
\label{THM:ROOT}
Assume the conditions ($\mathcal C$.\ref{C:Dn})-($\mathcal C$.\ref{C:locsta}) and ($\mathcal C$.\ref{C:An})-($\mathcal C$.\ref{C:plambda}) hold. Let $a_n$ and $c_n$ be given by (\ref{eq:an}) and~\eqref{eq:cn}. If $a_n=O(|D_n|^{-1/2})$, $c_n=o(1)$ and $p_n=o(|D_n|)$, then there exists a local maximizer $\hat \btheta$ of $Q_n(\bX;\btheta)$  such that    ${\bf \| \hat \btheta -\btheta_0\|}=O_\mathrm{P}(\sqrt{p_n}(|D_n|^{-1/2}+a_n))$.
\end{theorem}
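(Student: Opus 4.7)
Following the Fan--Li / Fan--Peng framework, my plan is to set $\alpha_n=\sqrt{p_n}\,(|D_n|^{-1/2}+a_n)$ and to show that for every $\epsilon>0$ there exists $C>0$ such that, for $n$ sufficiently large, the event
\begin{equation*}
\sup_{\|\mathbf u\|=C} Q_n(\bX;\btheta_0+\alpha_n\mathbf u) < Q_n(\bX;\btheta_0)
\end{equation*}
has probability at least $1-\epsilon$. Continuity of $\btheta\mapsto Q_n(\bX;\btheta)$ then forces a local maximizer inside the open ball of radius $C\alpha_n$ around $\btheta_0$, which yields the $O_\mathrm{P}(\alpha_n)$ rate.

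\textbf{Taylor expansion of $Q_n$.} I would Taylor-expand $Q_n(\btheta_0+\alpha_n\mathbf u)-Q_n(\btheta_0)$ and treat the log-pseudo-likelihood and penalty parts separately. For the former,
\begin{align*}
\mathrm{LPL}_n(\bX;\btheta_0+\alpha_n\mathbf u)-\mathrm{LPL}_n(\bX;\btheta_0)
= \alpha_n\mathbf u^\top\mathbf{LPL}_n^{(1)}(\bX;\btheta_0) -\tfrac{\alpha_n^2}{2}\mathbf u^\top\mathbf A_n(\bX;\btheta^*)\mathbf u,
\end{align*}
with $\btheta^*$ on the segment between $\btheta_0$ and $\btheta_0+\alpha_n\mathbf u$. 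The score is mean-zero by the GNZ formula~\eqref{gnz} with covariance $\mathbf A_n(\btheta_0)+\mathbf B_n(\btheta_0)$. Under ($\mathcal C$.\ref{C:Theta})--($\mathcal C$.\ref{C:locsta}) each diagonal entry of this covariance is $O(|D_n|)$: local stability controls $\mathbf A_n$, while the finite range kills the integrand of $\mathbf B_n$ as soon as $\|u-v\|>R$ and thus reduces the double integral to an $O(|D_n|)$ quantity. Markov's inequality then gives $\|\mathbf{LPL}_n^{(1)}(\bX;\btheta_0)\|=O_\mathrm{P}(\sqrt{p_n|D_n|})$, and since $\alpha_n\geq\sqrt{p_n/|D_n|}$, the linear term is $O_\mathrm{P}(\alpha_n^2|D_n|\|\mathbf u\|)$. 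For the quadratic term, $\alpha_n\|\mathbf u\|\to 0$ places $\btheta^*\in\mathscr N(\btheta_0)$ for $n$ large, and an $L^2$-concentration of $|D_n|^{-1}\mathbf A_n(\bX;\btheta^*)$ around its expectation, together with ($\mathcal C$.\ref{C:An}), yields
\begin{equation*}
\tfrac{\alpha_n^2}{2}\mathbf u^\top\mathbf A_n(\bX;\btheta^*)\mathbf u \geq c\,|D_n|\alpha_n^2\|\mathbf u\|^2
\end{equation*}
with probability tending to one, for some $c>0$ independent of $\mathbf u$.

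\textbf{Penalty contribution and conclusion.} The coordinates where $\theta_{0j}=0$ contribute $-|D_n|\sum_{j\geq l+s+1} p_{\lambda_{n,j}}(|\alpha_n u_j|)\leq 0$ and can simply be discarded. For the $s$ coordinates with $\theta_{0j}\neq 0$, a second-order Taylor expansion of $p_{\lambda_{n,j}}(|\theta_{0j}+\alpha_n u_j|)$ around $|\theta_{0j}|$, justified by the smoothness in ($\mathcal C$.\ref{C:plambda}), gives a contribution of absolute value at most
\begin{equation*}
|D_n|\bigl(\sqrt{s}\,a_n\alpha_n\|\mathbf u\|+c_n\alpha_n^2\|\mathbf u\|^2+o(\alpha_n^2)\bigr).
\end{equation*}
Since $a_n\alpha_n\leq \alpha_n^2/\sqrt{p_n}$ and $c_n=o(1)$, both summands are asymptotically dominated by the quadratic LPL term, so on $\|\mathbf u\|=C$
\begin{equation*}
Q_n(\btheta_0+\alpha_n\mathbf u)-Q_n(\btheta_0)\leq |D_n|\alpha_n^2\bigl[-cC^2+O_\mathrm{P}(C)+o(C^2)\bigr],
\end{equation*}
which is negative with probability at least $1-\epsilon$ once $C$ is chosen large enough. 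The main obstacle will be the $L^2$-concentration of $|D_n|^{-1}\mathbf A_n(\bX;\btheta)$ needed in the quadratic term: as the authors emphasize in the Introduction, Gibbs processes are not known to be $\alpha$-mixing, so the argument has to go through the finite-range property~\eqref{fr}, local stability~\eqref{ls}, and a careful iterated-GNZ bound on the variance of $\mathbf u^\top\mathbf A_n(\bX;\btheta)\mathbf u/|D_n|$, uniformly in $\btheta\in\mathscr N(\btheta_0)$ and in the diverging dimension $p_n$. I expect this concentration, together with the trace bound on $\mathbf A_n+\mathbf B_n$, to form the backbone of the auxiliary lemmas in Appendix~\ref{sec:auxLemma}.
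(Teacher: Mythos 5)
Your overall architecture --- the ball of radius $\alpha_n=\sqrt{p_n}\,(|D_n|^{-1/2}+a_n)$, the sign argument on the sphere, the exact second-order Taylor expansion of $\mathrm{LPL}_n$ with Hessian $-\mathbf A_n(\bX;\btheta^*)$, the $O_\mathrm{P}(\sqrt{p_n|D_n|})$ bound on the score via the GNZ-centred variance $\mathbf A_n(\btheta_0)+\mathbf B_n(\btheta_0)=O(p_n|D_n|)$, and the splitting of the penalty into the discarded zero coordinates plus a Taylor expansion at the $s$ nonzero ones --- is exactly the paper's proof (the score bound is its Lemma~\ref{lemma1}; the paper uses a third-order rather than second-order expansion of $p_{\lambda_{n,j}}$, invoking the uniformly bounded third derivative in ($\mathcal C$.\ref{C:plambda}) to control the remainder, but that difference is cosmetic).

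The one substantive divergence is the step you yourself single out as the main obstacle: the $L^2$-concentration of $|D_n|^{-1}\mathbf A_n(\bX;\btheta^*)$ around its expectation. No such concentration is needed, and the paper does not prove one. Condition ($\mathcal C$.\ref{C:An}) is not a statement about the expected matrix $\mathbf A_n(\btheta)$: it is quantified over \emph{all} configurations $\bx\in N_{lf}$ and all $\btheta\in\mathscr N(\btheta_0)$, i.e.\ it is a deterministic, pathwise lower bound $\liminf_n\nu_{\min}\big(|D_n|^{-1}\mathbf A_n(\bx;\btheta)\big)>0$. The paper therefore applies it directly to the realized Hessian $\mathbf A_n(\bX;\btheta_0+td_n\mathbf k)$, almost surely, once $n$ is large enough that $\btheta_0+td_n\mathbf k\in\mathscr N(\btheta_0)$. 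Pursuing your route instead would require a concentration inequality for functionals of an inhomogeneous Gibbs process --- precisely what the authors describe in the Introduction as an open problem and their stated reason for restricting to asymptotic rather than finite-sample results --- so as written your proof rests on an auxiliary lemma that is neither in the paper nor currently available. The fix is simply to read ($\mathcal C$.\ref{C:An}) as a pathwise condition and drop the concentration step; everything else in your argument then goes through as in the paper.
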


This implies that, if $a_n=O(|D_n|^{-1/2})$, $c_n=o(1)$ and $p_n=o(|D_n|)$, the regularized pseudo-likelihood estimator is root-$(|D_n|/p_n)$ consistent. Furthermore, we demonstrate in Theorem~\ref{THM:SPARSITYCLT} that such a root-$(|D_n|/p_n)$ consistent estimator ensures the sparsity of $\boldsymbol{\hat \theta}$; that is, the estimate will correctly set $\btheta_2$ to zero with probability tending to 1 as $n \to \infty$, and $\hat \btheta_1$ is asymptotically normal.
%\begin{theorem}
%\label{THM:SPARSITYCLT}
%Assume the conditions ($\mathcal C$.\ref{C:Dn})-($\mathcal C$.\ref{C:pnDn}) hold and set $m=l+s$. If $a_n\sqrt{|D_n|}\to 0$, $b_n\sqrt{|D_n|/p_n^2} \to \infty$ and $c_n\sqrt{p_n}\to 0$ as $n\to\infty$, the root-($|D_n|/p_n$) consistent local maximizer ${ \boldsymbol {\hat { \theta}}}=(\boldsymbol{\hat \theta}_1^\top, \boldsymbol{\hat \theta}_2^\top)^\top $ in Theorem 2 satisfies the two following properties:
%\begin{enumerate}[(i)]
%\item Sparsity: $\mathrm{P}(\boldsymbol{\hat \theta}_2=0) \to 1$ as $n \to \infty$,
%\item Asymptotic Normality: $|D_n|^{1/2} \boldsymbol \Sigma_n(\bX;\btheta_{0})^{-1/2}(\boldsymbol{\hat \theta}_1- \boldsymbol{\theta}_{01})\xrightarrow{d} \mathcal{N}(0, \mathbf{I}_{m})$,
%\end{enumerate}
\begin{theorem}
\label{THM:SPARSITYCLT}
Assume the conditions ($\mathcal C$.\ref{C:Dn})-($\mathcal C$.\ref{C:plambda}) hold and set $m=l+s$. If $a_n\sqrt{|D_n|}\to 0$, $b_n\sqrt{|D_n|/p_n^2} \to \infty$, $c_n\sqrt{p_n}\to 0$ and $p_n^2/|D_n|\to 0$ as $n\to\infty$, the root-($|D_n|/p_n$) consistent local maximizer ${ \boldsymbol {\hat { \theta}}}=(\boldsymbol{\hat \theta}_1^\top, \boldsymbol{\hat \theta}_2^\top)^\top $ in Theorem 2 satisfies the two following properties:
\begin{enumerate}[(i)]
\item Sparsity: $\mathrm{P}(\boldsymbol{\hat \theta}_2=0) \to 1$ as $n \to \infty$,
\item Asymptotic Normality: $|D_n|^{1/2} \boldsymbol \Sigma_n(\bX;\btheta_{0})^{-1/2}(\boldsymbol{\hat \theta}_1- \boldsymbol{\theta}_{01})\xrightarrow{d} \mathcal{N}(0, \mathbf{I}_{m})$,
\end{enumerate}
where
\begin{align}
\boldsymbol \Sigma_n(\bX;\boldsymbol{\theta}_{0})= & |D_n|\{\mathbf{A}_{n,11}(\bX;\boldsymbol{\theta}_{0})+|D_n| \boldsymbol \Pi_n \}^{-1}\{\mathbf{A}_{n,11}(\bX;\boldsymbol{\theta}_{0})+\mathbf{B}_{n,11}(\bX;\boldsymbol{\theta}_{0})\}\nonumber \\
& \{\mathbf{A}_{n,11}(\bX;\boldsymbol{\theta}_{0})+|D_n| \boldsymbol \Pi_n \}^{-1}, \label{eq:Sigman} \\
\boldsymbol \Pi_n = & \mathrm{diag}\{p''_{\lambda_{n,1}}(|\theta_{01}|),\ldots,p''_{\lambda_{n,l}}(|\theta_{0l}|),p''_{\lambda_{n,l+1}}(|\theta_{0(l+1)}|),\ldots,p''_{\lambda_{n,m}}(|\theta_{0m}|)\}. \label{eq:pi}
\end{align}
\end{theorem}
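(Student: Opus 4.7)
The plan is to follow the Fan--Li oracle strategy of \citet{fan2001variable}, in its diverging-dimension variant of \citet{fan2004nonconcave}, while replacing i.i.d.\ moments everywhere by quantities computed through the GNZ equation~\eqref{gnz}. The assumptions $a_n\sqrt{|D_n|}\to 0$ and $c_n\sqrt{p_n}\to 0$ entail $a_n=o(|D_n|^{-1/2})$ and $c_n=o(1)$, so Theorem~\ref{THM:ROOT} applies and provides a local maximiser $\hat\btheta$ satisfying $\|\hat\btheta-\btheta_0\|=O_\mathrm{P}(\sqrt{p_n/|D_n|})$; in particular $|\hat\theta_j|\le\epsilon_n=K_1\sqrt{p_n/|D_n|}$ for every $j>m$ on an event of probability tending to one.

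\textbf{Sparsity.} Restricted to this event, I would show that no index $j>m$ can satisfy the first-order condition with $\hat\theta_j\neq 0$. Writing
\[
\mathrm{sgn}(\hat\theta_j)\,\frac{\partial Q_n}{\partial\theta_j}(\bX;\hat\btheta)=\mathrm{sgn}(\hat\theta_j)\,\frac{\partial\mathrm{LPL}_n}{\partial\theta_j}(\bX;\hat\btheta)-|D_n|\,p'_{\lambda_{n,j}}(|\hat\theta_j|),
\]
I Taylor-expand the score about $\btheta_0$ and use ($\mathcal C$.\ref{C:cov})--($\mathcal C$.\ref{C:locsta}) together with GNZ to bound the zero-order piece by $O_\mathrm{P}(\sqrt{|D_n|})$ and the Hessian-times-deviation piece by $O_\mathrm{P}(p_n\sqrt{|D_n|})$, via Cauchy--Schwarz and the coordinate bound $|\partial^2_{jk}\mathrm{LPL}_n|\le C|D_n|$ coming from local stability. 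By definition of $b_n$ the penalty contribution is at least $|D_n|b_n$, and the assumption $b_n\sqrt{|D_n|/p_n^2}\to\infty$ forces it to strictly dominate, so that $\mathrm{sgn}(\hat\theta_j)\partial Q_n/\partial\theta_j<0$, which contradicts $\hat\btheta$ being a local maximum unless $\hat\theta_j=0$.

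\textbf{Asymptotic normality.} On the sparsity event, $\hat\btheta_1$ satisfies the reduced first-order condition $\mathbf{LPL}_{n,1}^{(1)}(\bX;\hat\btheta_1,\mathbf{0})=|D_n|\mathbf{r}_n(\hat\btheta_1)$, where $\mathbf{r}_n(\btheta_1)$ collects the signed penalty derivatives (its first $l$ entries being zero). Taylor-expanding the score at $\btheta_0$ using $\mathbf{LPL}_{n,11}^{(2)}(\bX;\cdot)=-\mathbf{A}_{n,11}(\bX;\cdot)$ and each penalty derivative at $|\theta_{0j}|$ via the thrice differentiability provided by ($\mathcal C$.\ref{C:plambda}), I obtain, after rearrangement,
\[
\bigl\{\mathbf{A}_{n,11}(\bX;\btheta_0)+|D_n|\boldsymbol\Pi_n\bigr\}(\hat\btheta_1-\btheta_{01})=\mathbf{LPL}_{n,1}^{(1)}(\bX;\btheta_0)-|D_n|\mathbf{b}_n+\mathbf{R}_n,
\]
with $\mathbf{b}_n=(0,\dots,0,p'_{\lambda_{n,l+1}}(|\theta_{0(l+1)}|)\mathrm{sgn}(\theta_{0(l+1)}),\dots,p'_{\lambda_{n,m}}(|\theta_{0m}|)\mathrm{sgn}(\theta_{0m}))^\top$. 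The remainder $\mathbf{R}_n$, which gathers the linearisation error of the Hessian and the quadratic penalty term, is controlled by $p_n^2/|D_n|\to 0$ and $c_n\sqrt{p_n}\to 0$, together with the Lipschitz smoothness of $\mathbf{A}_{n,11}(\bX;\cdot)$ under ($\mathcal C$.\ref{C:An}), to be $o_\mathrm{P}(\sqrt{|D_n|})$. The drift $|D_n|\mathbf{b}_n$ has norm $O(|D_n|a_n)$, which, after pre-multiplication by $|D_n|^{-1/2}\boldsymbol\Sigma_n(\bX;\btheta_0)^{-1/2}$, becomes $o(1)$ because $a_n\sqrt{|D_n|}\to 0$ and ($\mathcal C$.\ref{C:BnCn}) bounds eigenvalues of $\boldsymbol\Sigma_n^{-1}$ from above. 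The conclusion then reduces to
\[
|D_n|^{-1/2}\{\mathbf{A}_{n,11}(\btheta_0)+\mathbf{B}_{n,11}(\btheta_0)\}^{-1/2}\mathbf{LPL}_{n,1}^{(1)}(\bX;\btheta_0)\xrightarrow{d}\mathcal{N}(0,\mathbf{I}_m),
\]
combined with the Slutsky replacement of the deterministic matrices $\mathbf{A}_{n,11}(\btheta_0)$, $\mathbf{B}_{n,11}(\btheta_0)$ by their stochastic counterparts, which follows from second-moment computations using GNZ and local stability.

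\textbf{Main obstacle.} The genuinely new probabilistic input is the multivariate CLT for the pseudo-likelihood score of an \emph{inhomogeneous} finite-range GPP. Since Gibbs processes are not known to be $\alpha$-mixing, off-the-shelf CLTs for mixing sequences do not apply. I would exploit~\eqref{fr} and the regular growth of $(D_n)$ from ($\mathcal C$.\ref{C:Dn}) to partition $D_n$ into congruent cubes separated by buffer strips of width $R$, so that statistics supported on well-separated blocks are conditionally independent given the exterior configuration, and then close a Bernstein-type blocking argument by verifying Lindeberg's condition from the fourth-moment bound in ($\mathcal C$.\ref{C:cov}), local stability, and the non-degeneracy ($\mathcal C$.\ref{C:BnCn}). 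This is the inhomogeneous analogue of the CLT used by \citet{billiot2008maximum} in the stationary case, and it is, in my view, the main technical hurdle of the proof.
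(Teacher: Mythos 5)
Your proposal is correct and follows essentially the same route as the paper: your sparsity argument is the paper's Lemma~\ref{lemma2} (the score derivative in any zero direction is $O_\mathrm{P}(\sqrt{|D_n|p_n^2})$ while the penalty derivative is at least $|D_n|b_n$, so $b_n\sqrt{|D_n|/p_n^2}\to\infty$ forces the sign), and your normality argument is the same Taylor expansion, rearrangement and remainder control that leads to \eqref{eq:MnLn}. The one point where you genuinely depart is the score CLT: the paper does not rebuild a blocking CLT but instead verifies the four conditions of the general CLT for nonstationary \emph{conditionally centered} random fields of \citet[Theorem~A.1]{coeurjolly2017parametric} — fourth moments of the cell innovations via iterated GNZ and ($\mathcal C$.\ref{C:cov})--($\mathcal C$.\ref{C:locsta}), covariance summability and vanishing of $\EE(\bY_{n,j}\bY_{n,k}^\top)$ for $|k-j|>R$ via \eqref{fr}, non-degeneracy via ($\mathcal C$.\ref{C:BnCn}), and the conditional centering $\EE(\bY_{n,j}\mid \bX_{n,k},\,k\neq j)=0$. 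If you insist on re-deriving the CLT, be aware that buffer strips of width $R$ do \emph{not} make well-separated blocks unconditionally independent (the exterior configuration is itself random), so a plain Bernstein blocking argument does not close; the property that actually does the work is the conditional centering of each cell innovation given the configuration outside its cell (a consequence of GNZ plus finite range), which turns the sum into a martingale-difference-type array — this is also the mechanism behind the stationary CLT of \citet{billiot2008maximum} that you cite as the analogue.
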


As a consequence, $\boldsymbol \Sigma_n(\bX;\boldsymbol{\theta}_{0})$ is the asymptotic covariance matrix of $\boldsymbol{\hat \theta}_1$. Note that $\boldsymbol \Sigma_n(\bX;\boldsymbol{\theta}_{0})^{-1/2}$ is the inverse of $\boldsymbol \Sigma_n(\bX;\boldsymbol{\theta}_{0})^{1/2}$, where $\boldsymbol \Sigma_n(\bX;\boldsymbol{\theta}_{0})^{1/2}$ is any square matrix with $\boldsymbol \Sigma_n(\bX;\boldsymbol{\theta}_{0})^{1/2}\big(\boldsymbol \Sigma_n(\bX;\boldsymbol{\theta}_{0})^{1/2}\big)^\top=\boldsymbol \Sigma_n(\bX;\boldsymbol{\theta}_{0})$.

It is worth pointing out that these asymptotic results do not exist in the literature for inhomogeneous Gibbs models in an unregularized setting. The following result fills this gap.
\begin{theorem}
\label{thm:the4}
 Suppose that the conditions ($\mathcal C$.\ref{C:Dn})-($\mathcal C$.\ref{C:locsta}) and ($\mathcal C$.\ref{C:An}) are satisfied. Then, there exists a local maximizer $\hat \btheta$ of $\mbox{LPL}_n(\bX;\btheta)$  such that    ${\bf \| \hat \btheta -\btheta_0\|}=O_\mathrm{P}(|D_n|^{-1/2})$. If in addition, condition ($\mathcal C$.\ref{C:BnCn}) holds, then such a root-$|D_n|$ consistent local maximizer ${ \boldsymbol {\hat { \theta}}}$ satisfies the following property:
\begin{align*}
\{ \mathbf{A}_{n}(\bX;\boldsymbol \theta_{0})+\mathbf{B}_{n}(\bX;\boldsymbol \theta_{0})\}^{-1/2}
\mathbf{A}_{n}(\bX;\boldsymbol \theta_{0})(\boldsymbol{\hat \theta}-\boldsymbol \theta_{0})&\xrightarrow{d} \mathcal{N}(0,\mathbf{I}).
\end{align*} 
\end{theorem}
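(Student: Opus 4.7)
The plan is to follow the classical two-step M-estimator recipe, adapted to the spatial Gibbs setting, taking advantage of the fact that Theorem~\ref{thm:the4} is the ``unpenalized shadow'' of Theorems~\ref{THM:ROOT}-\ref{THM:SPARSITYCLT}: most of the work is already encoded in the same auxiliary results. The core identities, both consequences of the GNZ formula~\eqref{gnz} and its second-order version, are $\EE[\mathbf{LPL}_n^{(1)}(\bX;\btheta_0)]=\mathbf{0}$ and $\Var[\mathbf{LPL}_n^{(1)}(\bX;\btheta_0)]=\mathbf{A}_n(\btheta_0)+\mathbf{B}_n(\btheta_0)$, together with $-\EE[\mathbf{LPL}_n^{(2)}(\bX;\btheta_0)]=\mathbf{A}_n(\btheta_0)$.

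For the rate statement, I would use the standard device of Fan and Li. Set $\mathbf{u}\in\mathbb{R}^{p_n}$ with $\|\mathbf{u}\|=C$ and study
\[
g_n(\mathbf{u}) := \mbox{LPL}_n(\bX;\btheta_0+|D_n|^{-1/2}\mathbf{u}) - \mbox{LPL}_n(\bX;\btheta_0).
\]
A second-order Taylor expansion combined with the explicit form of $\mathbf{LPL}_n^{(2)}$ yields
\[
g_n(\mathbf{u}) = |D_n|^{-1/2}\mathbf{u}^\top\mathbf{LPL}_n^{(1)}(\bX;\btheta_0) - \tfrac{1}{2}\,\mathbf{u}^\top\bigl(|D_n|^{-1}\mathbf{A}_n(\bX;\tilde\btheta)\bigr)\mathbf{u}
\]
for some $\tilde\btheta$ on the segment between $\btheta_0$ and $\btheta_0+|D_n|^{-1/2}\mathbf{u}$. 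Conditions ($\mathcal{C}$.\ref{C:cov})-($\mathcal{C}$.\ref{C:locsta}) keep $\Var[|D_n|^{-1/2}\mathbf{LPL}_n^{(1)}(\bX;\btheta_0)]$ bounded, so the linear term is $O_\mathrm{P}(\|\mathbf{u}\|)$, while condition ($\mathcal{C}$.\ref{C:An}) forces the quadratic term to be at least of order $\|\mathbf{u}\|^2$. For $C$ sufficiently large, the quadratic term dominates uniformly on $\|\mathbf{u}\|=C$ with probability tending to one, so by continuity $\mbox{LPL}_n$ has a local maximum inside the ball, which gives $\|\hat\btheta-\btheta_0\|=O_\mathrm{P}(|D_n|^{-1/2})$.

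For the normality part, I would Taylor expand the score equation $\mathbf{LPL}_n^{(1)}(\bX;\hat\btheta)=\mathbf{0}$ at $\btheta_0$ to get $\hat\btheta-\btheta_0 = \mathbf{A}_n(\bX;\tilde\btheta)^{-1}\mathbf{LPL}_n^{(1)}(\bX;\btheta_0)$. Two ingredients finish the proof: (i) a central limit theorem
\[
\bigl\{\mathbf{A}_n(\btheta_0)+\mathbf{B}_n(\btheta_0)\bigr\}^{-1/2}\mathbf{LPL}_n^{(1)}(\bX;\btheta_0)\xrightarrow{d}\mathcal{N}(0,\mathbf{I}),
\]
and (ii) the negligibility $|D_n|^{-1}\{\mathbf{A}_n(\bX;\tilde\btheta)-\mathbf{A}_n(\bX;\btheta_0)\}\to\mathbf{0}$ in probability, together with $|D_n|^{-1}\mathbf{A}_n(\bX;\btheta_0)$ concentrating around $|D_n|^{-1}\mathbf{A}_n(\btheta_0)$. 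Ingredient (ii) follows from the Lipschitz dependence of $\lambda_\btheta$ on $\btheta$, conditions ($\mathcal{C}$.\ref{C:cov})-($\mathcal{C}$.\ref{C:locsta}), the rate from the first part, and a Campbell/GNZ variance bound controlling the fluctuation of $\mathbf{A}_n(\bX;\btheta_0)$ around its mean. Inserting everything, Slutsky-style, into the linearization, and using condition ($\mathcal{C}$.\ref{C:BnCn}) to guarantee the normalizer is well-posed, yields the announced convergence.

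The main obstacle is ingredient (i): since the model is inhomogeneous, the standard stationary spatial CLTs (Bolthausen, Jensen-K\"unsch) do not apply out of the box. The natural strategy is a big-block/small-block decomposition of $D_n$ into cubes of side length growing slowly with $n$, separated by guard zones of width at least $R$. The finite range property ($\mathcal{C}$.\ref{C:Theta}) together with local stability ($\mathcal{C}$.\ref{C:locsta}) then ensure that contributions of the score associated with non-neighboring big blocks are \emph{conditionally} independent given the configuration in the guard zones, and a Lindeberg/Bernstein argument on Cram\'er-Wold projections $\mathbf{c}^\top\mathbf{LPL}_n^{(1)}(\bX;\btheta_0)$ closes the CLT; the fourth-moment assumption in ($\mathcal{C}$.\ref{C:cov}) provides the required uniform integrability and the lower-bound in ($\mathcal{C}$.\ref{C:BnCn}) pins down the variance rate. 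Because exactly this CLT is already needed in the proofs of Theorems~\ref{THM:ROOT}-\ref{THM:SPARSITYCLT}, Theorem~\ref{thm:the4} will be obtained essentially as a corollary by switching off the penalty, i.e.\ by running the same argument from Appendices~\ref{sec:auxLemma}-\ref{proof3} with $a_n=b_n=c_n=0$ and $p_n$ fixed (or at most slowly growing).
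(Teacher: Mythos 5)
Your overall architecture coincides with the paper's: Theorem~\ref{thm:the4} is obtained by running the proofs of Theorems~\ref{THM:ROOT}--\ref{THM:SPARSITYCLT} with the penalty switched off, the rate via the Fan--Li local-maximizer device together with the quadratic lower bound from ($\mathcal C$.\ref{C:An}), and the normality via linearization of the score plus a CLT for $\mathbf{LPL}_n^{(1)}(\bX;\btheta_0)$. The one place where you genuinely depart from the paper is the CLT itself. The paper does not use a big-block/small-block decomposition: it partitions $D_n$ into \emph{unit} cubes $\Delta_{n,j}$, writes the score as $\sum_j \bY_{n,j}$ with $\bY_{n,j}$ the $\bt$-innovation on $\Delta_{n,j}$, and invokes Theorem~A.1 of \citet{coeurjolly2017parametric}, a CLT for nonstationary \emph{conditionally centered} random fields. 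The property that makes this work without any mixing assumption is $\EE(\bY_{n,j}\mid \bX_{n,k},\,k\neq j)=0$, a direct consequence of the GNZ formula \eqref{gnz}; combined with the finite range ($\mathcal C$.\ref{C:Theta}), which makes $\EE(\bY_{n,j}\bY_{n,k}^\top)$ vanish for $|k-j|>R$, and the moment bounds from ($\mathcal C$.\ref{C:cov})--($\mathcal C$.\ref{C:locsta}), this verifies all the hypotheses of the cited theorem. Your blocking sketch is riskier on two counts. First, conditional independence of well-separated blocks given a guard-zone configuration is a consequence of the finite range (the spatial Markov property), not of local stability, which plays no role there. Second, and more importantly, turning that conditional independence into a Lindeberg CLT is delicate because the conditioning configuration is random and the conditional block variances are not controlled by the stated assumptions; this is precisely the kind of argument the authors avoid, since (as they note) even $\alpha$-mixing is unknown for these models. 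The conditional-centering observation is the missing ingredient that would make your ingredient (i) rigorous; with it, the rest of your proof goes through essentially as written.
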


%\ism{\begin{corollary}
%\label{cor:the2}
%Under conditions ($\mathcal C$.\ref{C:Dn})-($\mathcal C$.\ref{C:BnCn}), the root-($|D_n|$) consistent local maximizer ${ \boldsymbol {\hat { \theta}}}$ in Corollary 1 satisfies the following property:
%\begin{align*}
%\{ \mathbf{A}_{n}(\bX;\boldsymbol \theta_{0})+\mathbf{B}_{n}(\bX;\boldsymbol \theta_{0})\}^{-1/2}
%\mathbf{A}_{n}(\bX;\boldsymbol \theta_{0})(\boldsymbol{\hat \theta}-\boldsymbol \theta_{0})&\xrightarrow{d} \mathcal{N}(0,\mathbf{I}_{p}).
%\end{align*} 
%\end{corollary}}
We claim that Theorem~\ref{thm:the4} ensures from Theorems~\ref{THM:ROOT}-\ref{THM:SPARSITYCLT}. It suffices to set the penalty part to zero and to consider only the pseudo-likelihood function $\mbox{LPL}_n( \bX;.)$ as the objective function in the proofs of Theorems~\ref{THM:ROOT}-\ref{THM:SPARSITYCLT}. Proof of Theorem~\ref{thm:the4} is therefore omitted. 

\subsection{Discussion of the conditions} \label{sec:conditions}

%In this section, we provide comments that can be made on conditions ($\mathcal C$.\ref{C:Dn})-($\mathcal C$.\ref{C:plambda}). 
Condition ($\mathcal C$.\ref{C:Theta}) ensures that the spatial GPP has a finite interaction range while condition ($\mathcal C$.\ref{C:locsta}) ensures the local stability property. These two conditions are necessary to guarantee the existence of non-stationary exponential Gibbs models defined on $\mathbb{R}^d$ as mentioned in Theorem \ref{existence}. Note that ($\mathcal C$.\ref{C:Dn}) is needed in order to define the spatial GPP on  $\mathbb{R}^d$, which is in agreement with the investigation of its asymptotic properties. Condition ($\mathcal C$.\ref{C:cov}) combined with ($\mathcal C$.\ref{C:Theta}) and ($\mathcal C$.\ref{C:locsta}) allows us to bound the matrices $\mathbf{A}_n(\btheta_{0})$ and $\mathbf{B}_n(\btheta_{0})$ by $p_n \vert D_n \vert$, which in turn induces the boundedness of the variance of the score function by the same bound. From conditions  ($\mathcal C$.\ref{C:BnCn})-($\mathcal C$.\ref{C:An}), the matrix $\boldsymbol \Sigma_n(\bX;\boldsymbol{\theta}_{0})$ is invertible for sufficiently large $n$. 
%It is worth noticing that a central limit theorem is established for the first $m$ components of the score function denoted by $\mathbf{LPL}_{n,1}^{(1)}( \bX;\btheta_0)$ from the use of conditions ($\mathcal C$.\ref{C:Dn})-($\mathcal C$.\ref{C:BnCn}), which is based on a general central limit theorem for nonstationary conditionally centered random fields obtained by \citet{coeurjolly2017parametric}. 
The asymptotic normality of $\boldsymbol{\hat \theta}_1$ mainly ensues from a central limit theorem (CLT) for $\mathbf{LPL}_{n,1}^{(1)}( \bX;\btheta_0)$. The latter is based on a general CLT for nonstationary conditionally centered random fields obtained by \citet{coeurjolly2017parametric}. Conditions ($\mathcal C$.\ref{C:Dn})-($\mathcal C$.\ref{C:BnCn}) are necessary to apply such a result. 
%\ism{Note that Condition ($\mathcal C$.\ref{C:An}) holds for any neighborhood of $\btheta_0$ and the latter can be obtained or weakened by assuming that the Papangelou conditional intensity is bounded, which is not difficult to verify.} 
Condition ($\mathcal C$.\ref{C:plambda}) concerns only the penalty function and is similar to the one proposed by \citet{choiruddin2018convex}. Such an assumption is fulfilled for $\ell_1, \ell_2$, SCAD and MC+ penalty functions for example. 
%Condition ($\mathcal C$.\ref{C:pnDn}) states that the number of covariates $p_n$ does not grow too fast with respect to the volume of the observation domain $D_n$. This condition is similar to the one required by \citet{fan2004nonconcave} when $|D_n|$ is replaced by a sample size $n$.   

\section{Numerical aspects} \label{sec:num}

We describe in this section the numerical aspects of our optimization problem. The main aspect is to approximate numerically the integral part in the expression of the log-pseudo-likelihood function defined in  (\ref{ch2:Pois}). \cite{baddeley2000practical} define the Bermann-Turner approach which consists in using a finite sum approximation   
\[
\int_{D} \lambda_{\btheta}(u,\bx) \mathrm{d}u \approx \sum_{i=1}^{n+m} \nu_i \lambda_{\btheta}(u_i,\bx)
\]
which yields to an approximation of the log-pseudo-likelihood function as follows
\begin{align}
\label{approxpseudo}
\mbox{LPL}( \bx;\btheta) \approx \sum_{i=1}^{n+m} \nu_i (y_i\log\lambda_{\btheta}(u_i,\bx)-\lambda_{\btheta}(u_i,\bx))
\end{align}
where $u_i$, $i=1,\ldots,n+m$ are points in $D$ consisting of the $n$ data points $\bx$ and $m$ dummy points, the $\nu_i$ are quadrature weights summing to the volume of $D$ and finally

\[
y_i = \left \{
    \begin{array}{ll}
        1/\nu_i & \mbox{if} \; u_i \; \mbox{is a data point}, \; u_i \in \bx  \\
        0 & \mbox{if} \; u_i \; \mbox{is a dummy point}, \; u_i \notin \bx. 
    \end{array}
\right. 
\]
Note that this approximation performs well when $m$ is large. 
Including the $n$ data points in the approximation yields a small bias. However, now
the right-hand side of equation (\ref{approxpseudo}) corresponds to the log-likelihood function of a weighted Poisson regression model with responses $y_i$ and weights $\nu_i$, which leads to the use of standard statistical software for this fitting method. This is exploited in the \textsf{spatstat R} package \citep{baddeley2015spatial} by using the \textsf{ppm} function with the default method, \textsf{method=``mpl"}. In the situation where the number of points is quite large, using (\ref{approxpseudo}) to fit Gibbs model can be computationally intensive. To get around this drawback, one can use the logistic composite log-likelihood function proposed by \cite{baddeley2014logistic}
\begin{align}
\label{logistic}
\mbox{LCL}( \bx;\btheta) = \sum_{u \in \bx \cap D} \log \left(\frac{\lambda_{\btheta}(u,\bx)}{\delta(u) + \lambda_{\btheta}(u,\bx)} \right) + \sum_{u \in \bd \cap D} \log \left( \frac{\delta(u)}{\delta(u) +\lambda_{\btheta}(u,\bx)} \right)
\end{align}
where $\bd$ is a realization of  a dummy point process $\mathcal{D}$ independent of $\bX$ with known intensity function $\delta(u)$. Conditional on $\bx \cup \bd$, (\ref{logistic}) corresponds to the logistic regression model with responses $y(u)=1 \{ u \in \bx \}$ and offset term $-\log \delta(u)$. Thus, standard software for generalized linear models may also be used to implement this logistic fitting method; and this is provided in \textsf{R} by calling the \textsf{ppm} function with option \textsf{method=``logi"} of the \textsf{spatstat} package.   \newline
The approximate penalized log-pseudo-likelihood or logistic composite log-likelihood function is then 
\begin{align}
\label{approxQ}
\mbox{Q}(\bx;\btheta) \approx \mbox{CL}(\bx;\btheta) - |D|{\sum_{j=l+1}^{p} p_{\lambda_{j}}(|\theta_{j}|)}
\end{align}
where $\mbox{CL}(\bx;\btheta)$ corresponds to either the approximate log-pseudo-likelihood or the logistic composite log-likelihood function.  \newline
The regularization paths in (\ref{approxQ}) can be efficiently computed using the coordinate descent algorithm \citep{friedman2007pathwise,friedman2010regularization,breheny2011coordinate}, which is generally a fast and popular algorithm for estimation of penalized generalized linear models. More precisely we adopt cyclical coordinate descent methods, which can handle high-dimensional problems and can take advantage of sparsity. This algorithm is well described in \citet{choiruddin2018convex} and \citet{daniel2018penalized} for spatial point processes regularized with convex and non-convex penalties, incorporating both likelihood and composite likelihood. They provide quadratic approximation of the objective function, which for the pseudo-likelihood function defined in (\ref{approxpseudo}) is given by 
\begin{align}
\label{quadapprox}
\mbox{LPL}( \bx;\btheta) \approx \mbox{LPL}_{Q}( \bx;\btheta) = - \frac12 \sum_{i=1}^{N} {\boldsymbol{\nu}}_i (y_i^{\star} - \bt_i^\top \btheta )^2 + C(\widetilde{\btheta}),
\end{align} 
where $N=n+m$, $C(\widetilde{\btheta})$ is a constant, $y_i^{\star}$ are the working response values and ${\boldsymbol{\nu}}_i $ are the weights:
\begin{align*}
{\boldsymbol{\nu}}_i  &= \nu_i \exp( \bt_i^\top \widetilde{\btheta}) \\
y_i^{\star} &= \bt_i^\top \widetilde{\btheta} + \frac{y_i - \exp( \bt_i^\top \widetilde{\btheta})}{\exp( \bt_i^\top \widetilde{\btheta})}.
\end{align*}
The rest of the algorithm in our setting follows the same lines as those described in \citet{choiruddin2018convex} for the convex and non-convex penalties, i.e. (a) identify a decreasing sequences of the tuning parameter $\lambda$; (b) for each value of $\lambda$, compute $\mbox{LPL}_{Q}$ and then use the coordinate descent method to solve the following optimization problem
\begin{align}
\label{Omega}
\underset{\btheta \in \mathbb{R}^p}{\mathrm{min}} \left\{ - \mbox{LPL}_{Q}( \bx;\btheta) + |D|{\sum_{j=l+1}^{p} p_{\lambda_{j}}(|\theta_{j}|)} \right\}.
\end{align}
For full details on the algorithm, we refer the reader to \cite{choiruddin2018convex} and \citet{daniel2018penalized}.
The choice of the tuning parameter $\lambda$ is a challenging task when dealing with regularization techniques. Notice that large (small) values of $\lambda$ produce estimates with high (low) biases and low (high) variances. Therefore, an optimal choice of $\lambda$ is necessary to control the trade-off between the bias and the variance of the estimates. \cite{friedman2010regularization}, \cite{breheny2011coordinate}, among others have described in details the selection of the tuning parameter $\lambda$. First, it consists in identifying a sequence of $\lambda$ ranging from a maximum value of $\lambda$ for which all penalized coefficients are zero to the value of $\lambda$ corresponding to the unregularized parameter estimates, i.e. $\lambda=0$. Secondly, one can define a criterion, say $C$ to select $\lambda$, generally it consists in maximizing or minimizing $C$. For models with tractable likelihood functions like inhomogeneous Poisson point process, an approach for selecting the tuning parameter $\lambda$, is via information criteria, with the most commonly used based on BIC \citep{schwarz1978estimating}. Following \citet{gao2010composite}, a composite likelihood analogue of this criteria may be used for Gibbs models. Let us now define the matrices $\bH$ and $\bV$, which are involved in  the expression of information criteria for composite likelihoods, 
\[
\bH = \EE \left \{ - \mathbf{CL}^{(2)}( \bx;\btheta)  \right \} \; \mbox{and} \; \bV = \Var \left \{  \mathbf{CL}^{(1)}( \bx;\btheta) \right \}.
\]
Note that $\bSigma = \bH^{-1} \bV \bH^{-1}$ is the asymptotic variance-covariance matrix of the estimate $\hat{\btheta}$ and estimates of $\bH$ and $\bSigma$  can be efficiently computed using the \textsf{vcov} function of the \textsf{spatstat R} package \citep{coeurjolly2013fast}. 
The composite BIC \citep{gao2010composite}, which we denote by cBIC is defined as follows:
\begin{equation}
	\label{eq:defcbic}
\text{cBIC}(\lambda) = - 2 \mathrm{CL}(\bx;\hat{\btheta}_\lambda) + \log(n) d(\lambda) 
\end{equation}
where 
\begin{equation}
	d(\lambda) = \text{trace}(\hat{\bV}_\lambda \hat{\bH}^{-1}_\lambda)=\text{trace} (\hat{\bH}_\lambda \hat{\bSigma}_\lambda) \label{eq:dlambda}
\end{equation}
is called the  effective number of parameters in the model with tuning parameter $\lambda$. $\hat{\bSigma}_\lambda$ and $\hat{\bH}_\lambda$ are estimates of $\bSigma$ and $\bH$ at $\hat{\btheta}$ where we remove the lines and columns corresponding to the indices of zero coefficients of $\hat{\btheta}_\lambda$, i.e. $\hat{\bSigma}_\lambda=\hat{\bSigma}\vert_{\btheta=\hat{\btheta}_\lambda}$ and $\hat{\bH}_\lambda=\hat{\bH}\vert_{\btheta=\hat{\btheta}_\lambda}$. It is to be noticed that $\hat{\bSigma}_\lambda$ and $\hat{\bH}_\lambda$ are $a \times a$ matrices where $a = \vert \{j=0,1,\ldots,p: \; \hat{\theta}_j(\lambda) \neq 0 \} \vert$. In~\eqref{eq:defcbic}, the letter $n$ stands for the observed number of points. That choice could be discussed as it is the realization of a random variable. Other choices could be $|W|$ (used by \citet{choiruddin2018convex}) or $n+m$ (used by~\citet{daniel2018penalized}). The recent work by~\citet{choiruddin2020bic} shows that both from a theoretical and practical point of view, the most pertinent choice is $n$. 

Standard information criteria were designed for use in  an unregularized framework with objective function either a likelihood or a pseudo-likelihood (composite) function.
% ; and it is not clear that they take into account the effects of the tuning parameter $\lambda$. Therefore, it seems necessary to develop information criteria which are an explicit function of the tuning parameter to fill this gap. 
To take into account the effects of the tuning parameter $\lambda$, \cite{hui2015tuning}  consider the extended regularized information criterion (ERIC) for tuning parameter selection involving the likelihood function from a GLM and the adaptive lasso. Following \citet{daniel2018penalized} and according to the definition of our penalized composite likelihood, we propose the composite analogue of ERIC to Gibbs models
\begin{equation}
	\label{eq:defceric}
\text{cERIC}(\lambda) = - 2 \mathrm{CL}(\bx;\hat{\btheta}_\lambda) + \log \left(\frac{n}{ \vert D \vert \lambda} \right) d(\lambda). 
\end{equation}
Note that we choose the tuning parameter $\lambda \geq 0$ which minimizes the criterion of choice, i.e. either cBIC or cERIC in the present study, and this will yield the optimal GPP model. For ease of computation, we fix for SCAD, $\gamma=3.7$ following \citet{fan2001variable} and for MC+, $\gamma=3$ following \citet{breheny2011coordinate}.

% \begin{remark}
% We highlight here the main difference between the information criteria presented in the present study and that proposed in  \cite{daniel2018penalized}. Looking closely at the expressions  in the right-hand side of these information criteria formulas, one can readily see that the difference lies in the second term. Indeed, \citet{daniel2018penalized} considered the number of dummy points $m$ and the number of points $n$, i.e.  $N=m+n$ while we propose to use only the number of points $n$. In our setting, $N$ is too large compared to the expression of the composite likelihood. So, using it for computing these information criteria will tend to always choose the more complete model. Because, the second term of these information criteria will have the same behavior for any fixed tuning parameter $\lambda$. Also, we acknowledge that the term $\vert D \vert$ is missing in the version of ERIC used in \citet{daniel2018penalized}. 
% \end{remark}

\section{Simulation study} \label{sec:sim}

\subsection{Simulation set-up}
\label{sim:setup}
We conduct a simulation study to evaluate the performances (prediction and selection) of the regularized pseudo-likelihood estimator in an increasing domain framework, in which the total number of covariates increases as well as the size of the spatial domain. The spatial domains are $W_1=[0,250] \times [0,125]$, $W_2=[0,500] \times [0,250]$ and $W_3=[0,1000] \times [0,500]$. The setting for generating the spatial covariates is similar to that of \cite{waagepetersen2007estimating}, \cite{thurman2015regularized} and \cite{choiruddin2018convex}. We use the $201 \times 101$ pixel images of elevation ($x_1$) and slope ($x_2$) contained in the \textsf{bei} datasets of \textsf{spatstat} library in \textsf{R} \citep{team2019r} as two true covariates. Note that we standardize these two covariates, i.e. centered and scaled them. We first consider a \textsf{Scenario 0}, in which the covariates are only $x_1$ and $x_2$, i.e. for each domain $W_k \; (k=1,2,3)$ the total number of covariates is set to $p=2$. The aim of this scenario is to assess the asymptotic properties of the pseudo-likelihood estimator in an unregularized setting. Note that the regression coefficients of $x_1$ and $x_2$ are set respectively at $2$ and $0.75$, which means a relatively large effect of elevation and  a relatively small effect of slope. In a second time,  we set the total number of covariates at $p_k= \lfloor 3 \vert W_k  \vert^{1/4} \rfloor$ and define $q_k= p_k - 2$ for each domain $W_k$, $k=1,2,3$ where $\lfloor . \rfloor$ is the floor function and $\vert W_k  \vert$ is the area of $W_k$, this corresponds to $p_1=39$, $p_2=56$ and $p_3=79$; and create two different scenarios to define extra covariates for the regularized setting:
\begin{enumerate}[ \textsf{Scenario}~1.]
\item For each domain $W_k$, we generate $q_k$ $201 \times 101$ pixel images of covariates as standard Gaussian white noise. We denote these covariates by $x_3,\cdots,x_{q_k}$. In order to build multicollinearity, we transform them together with $x_1$ and $x_2$ through $\mathbf{z}(u)= \mathbf{V}^\top \bx(u)$ where $\bx(u)=\{ x_1(u),\cdots,x_{q_k}(u) \}^\top$. Note that $\mathbf{V}$ is such that $\boldsymbol\Omega = \mathbf{V}^\top \mathbf{V}$ where $\boldsymbol\Omega$ has entries $(\boldsymbol\Omega)_{ij}=(\boldsymbol\Omega)_{ji}=0.7^{\vert i - j \vert}$ for $i,j=1,\cdots,q_k$, except $(\boldsymbol\Omega)_{12}=(\boldsymbol\Omega)_{21}=0$, to preserve the correlation between the covariates $x_1$ and $x_2$. It is to be noticed that the regression coefficients for $z_3,\cdots,z_{q_k}$ are zero. \label{sce1} \\

\item We first center and scale the $13$ $50 \times 25$ pixel images of soil nutrients covariates obtained from the study in tropical forest of Barro Colorado Island (BCI) in central Panama  \cite[see][]{condit1998tropical,hubbell1999light,hubbell2005barro}. Then, we convert them to be $201 \times 101$ pixels images as $x_1$ and $x_2$. Note that for each $W_k$, we have that $q_k > 13$. So, in order to have $p_k$ covariates in total we consider the interaction between two soil nutrients. In this setting, the extra covariates are $13$ soil nutrients and $q_k -13$ interactions between them. Together with $x_1$ and $x_2$, we keep the structure of the covariance matrix to preserve the complexity of the situation. We set to zero the regression coefficients of $z_3,\cdots,z_{q_k}$ where $\mathbf{z}(u)=\{1,x_1(u),\cdots,x_{q_k}(u) \}^\top$. \label{sce2}

\end{enumerate}
We consider three Gibbs models: two Strauss processes with $R=9.25$ and $\psi=\log(\gamma)$, $\gamma=0.2,0.5$; and a Geyer saturation process with threshold $\sigma=1$, $R=9.25$, and $\psi=\log(1.5)$. For each model, the true conditional intensity is set to be $\lambda_{\boldsymbol{\theta}}(u,\mathbf{x}) = \exp(\beta_0 \, + \, \beta_1 z_1(u) + \beta_2 z_2(u) + \psi \, s(u,\mathbf{x}))$, where $\beta_1$ and $\beta_2$ are respectively the coefficients of the centered and scaled elevation and slope covariates, i.e. $\beta_1=2$ and $\beta_2=0.75$. The intercept $\beta_0$ is fixed so that we have under the Poisson model ($\psi=0$) in average $500$ points in $W_1$, $2000$ points in $W_2$ and $4000$ points in $W_3$. Note that for the implementation in \textsf{R} of the fitting method via the \textsf{ppm} function of \textsf{spatstat} package involving the pseudo-likelihood function, we consider two choices regarding the  number of dummy points $\textsf{nd}^2$ in term of the observed number of points $n$ to control efficiently the bias induced by the Bermann-Turner approximation. In the presence of regular patterns like the two Strauss models, we take $\textsf{nd}^2 \approx 256n$. For the clustered pattern, we consider the choice $\textsf{nd}^2 \approx 4n$, as suggested in the \textsf{spatstat R} package. With these scenarios, we simulate $500$ point patterns  from each model within each spatial domain using the Metropolis-Hastings algorithm implemented in the \textsf{rmh} function of the \textsf{spatstat} package. For each pattern in \textsf{Scenario 0}, we fit the conditional intensity via pseudo-likelihood and assess performance by looking at prediction properties of the estimation method: the bias, the standard deviation (SD) and the square root of mean squared errors (RMSE), which we define by
\[
\text{Bias} = \hspace{-.1cm} \left( \sum_{j=1}^{p_k} \left \{ \hat{\EE}(\hat{\theta}_j) - \theta_j \right \}^2 \right)^{\frac12},  \text{SD}= \hspace{-.1cm} \left ( \sum_{j=1}^{p_k} \hat{\sigma}^2_j \right )^{\frac12},  \text{RMSE}= \hspace{-.1cm} \left ( \sum_{j=1}^{p_k}   \hat{\EE}(\hat{\theta}_j -\theta_j)^2 \right )^{\frac12}
\]      
where $\hat{\mathbb{E}}(\hat{\theta}_j)$ and  $\hat{\sigma}^2_j$ are respectively the empirical mean and variance of the estimates $\hat{\theta}_j$, for $j=1,\cdots,p_k$ where $k=1,2,3$. For \textsf{Scenarios}~\ref{sce1} and~\ref{sce2}, we fit a regularization path to the simulated point patterns using modified internal function in \textsf{spatstat}, \textsf{glmnet} and \textsf{ncvreg}. Note that in our setting, a modification of the \textsf{ncvreg R} package is required to include option for weights. The optimal model from each path as stated at the end of Section~\ref{sec:num}, is chosen on  the basis of the minimizer of either cBIC or cERIC respectively defined by~\eqref{eq:defcbic} and~\eqref{eq:defceric}. 
% While for cBIC we implement its theoretical formula directly in \textsf{R} without any modification, this is not the case for cERIC. This is indeed related to the implementation of the methodology in the \textsf{glmnet} and \textsf{ncvreg} packages. So, the formula to implement cERIC in \textsf{R} is given by
% \begin{align}
% \label{eric}
% \text{cERIC}(\lambda) = - 2 \mathrm{CL}(\bx;\hat{\btheta}_\lambda) + \log \left(\frac{n}{ N \lambda} \right) d(\lambda)
% \end{align}
%  where any parameter involved in (\ref{eric}) except $\lambda$, is defined according to Section~\ref{sec:num}, for example $N=n + m$, etc. It is important to notice that $\lambda$ is now obtained from the \textsf{glmnet} or \textsf{ncvreg} functions. 
To evaluate $d(\lambda)$ given by~\eqref{eq:dlambda}, we need to estimate matrices $\mathbf H$ and $\boldsymbol\Sigma$, which can be obtained using the \textsf{vcov} function of the \textsf{spatstat} package. The computation of $\boldsymbol\Sigma$ using the \textsf{vcov} function involves the number of quadrature points $N$ and the number of covariates and it fails when both of the latter parameters are large. This is the case for the clustered pattern we considered (Geyer model) in the spatial domains $W_2$ and $W_3$. So, to get around this drawback, we conduct a parametric bootstrap approach, which we now describe. We generate $100$ point patterns from the Geyer model within each spatial domain ($W_2$, $W_3$) and estimate the coefficients of the parameter via the \textsf{ppm} function. The estimate of $\boldsymbol\Sigma$ is then obtained by computing the empirical covariance matrix of parameter estimates.
% The Monte Carlo estimation of $\mathbf H$ is not required, as its computation via the \textsf{vcov} function is stable for the considered simulation setting. \newline 

 We also look at the prediction performance of the estimation method for these two scenarios in the similar way we do for \textsf{Scenario 0}. In addition, we evaluate the selection performance of the two information criteria by looking at the true positive rate (TPR) and the false positive rate (FPR). TPR is defined as the ratio of the selected true covariates over the number of true covariates while FPR corresponds to the ratio of the selected noisy covariates over the number of noisy covariates. In our setting, TPR explains how the model can correctly select the true covariates, elevation $x_1$ and slope $x_2$; and FPR investigates how the model incorrectly select among the noise covariates $x_3$ to $x_{p_k}$, for $k=1,2,3$. As a rule of thumb, methods with TPRs close to $100 \%$ and FPRs close to $0$ present a good selection performance.

\subsection{Simulation results} 
\label{sim:results}
Table~\ref{table:scen0} presents prediction properties of the pseudo-likelihood estimator in an increasing spatial domain. Note that the results summarized in Table~\ref{table:scen0} correspond also to that of oracle model in \textsf{Scenarios} \ref{sce1} and \ref{sce2}. For all three models, the biases, the SDs and the RMSEs diminish as the spatial domain grows and the number of points increases, whereas the biases in spatial domains $W_2$ and $W_3$ do not vary much for each model. Overall, the bias tends towards 0 and the RMSE is improved as the spatial domain grew for all three models. The results illustrate the consistency of the pseudo-likelihood estimator for  Gibbs models. \newline
Prediction and selection properties of the regularized pseudo-likelihood estimator under different penalty functions are reported in Figures~\ref{fig:sc1.cbic}, \ref{fig:sc1.ceric} and \ref{fig:sc1.cbic.ceric} for \textsf{Scenario} \ref{sce1} and in Figures~\ref{fig:sc2.cbic}, \ref{fig:sc2.ceric} and \ref{fig:sc2.cbic.ceric} for \textsf{Scenario} \ref{sce2}. For more details on the figures, see the tables  in Appendix~\ref{tab:sim}. Generally, predictive performance and variable selection are more challenging for the Strauss models than for the Geyer model. The RMSEs are worse for the Strauss models in $W_1$, improve gradually for the Strauss models in spatial domains $W_2$ and $W_3$ and reach the best values for the Geyer model under some regularization methods by approaching the RMSEs of the oracle method. For all three models, the variable selection indices significantly improve with increasing spatial domain, whereas the improvement is more stable for the Geyer model. This improvement is largely driven by the number of points in the model and the size of the spatial domain. \newline
From Figures~\ref{fig:sc1.cbic} and \ref{fig:sc1.ceric}, one observes that the adaptive lasso and adaptive elastic net methods significantly outperform other regularization methods in terms of predictive performance and variable selection. Firstly, their RMSEs are very competitive as they frequently agree with (or close to) that of oracle method for the clustered model across all sizes of spatial domain. Secondly, they have the highest TPRs and lowest FPRs, yielding good overall selection performance. The latter means that the true covariates are selected and the noise covariates are eliminated more frequently from these two models. Moreover, these performances in prediction and selection are slightly amplified by considering a more complex design in \textsf{Scenario} \ref{sce2}, the results of which are reported in Figures~\ref{fig:sc2.cbic} and \ref{fig:sc2.ceric}. It is a known fact that ridge regularization method always selects all covariates, which implies that the rates remain unchanged (TPR and FPR equal to $100\%$). Lasso and elastic net present quite large values of FPRs in \textsf{Scenario} \ref{sce1} (see Figures~\ref{fig:sc1.cbic} and \ref{fig:sc1.ceric}), meaning that they wrongly select the noise covariates more frequently. As the design is getting more complex for \textsf{Scenario} \ref{sce2} (see Figures~\ref{fig:sc2.cbic} and \ref{fig:sc2.ceric}), we gain smaller FPRs for lasso (not for elastic net) but suffer at the same time from smaller TPRs. These smaller TPRs are explained by the non-selection of the slope covariate ($x_2$), which has smaller coefficient than that of the elevation covariate ($x_1$). This yields poor overall selection performance for lasso and elastic net. 
% It is worth noticing that lasso, ridge and elastic net cannot satisfy our theorems and they have quite large RMSEs (\textsf{Scenario} \ref{sce1}) in a small spatial domain with a small number of points, which become larger in a more complex design (\textsf{Scenario} \ref{sce2}). 
The SCAD and MC+ methods perform comparably, outperform lasso, ridge and elastic net. Considering \textsf{Scenario}~\ref{sce1}, SCAD and MC+ have the highest FPRs (after ridge) in a small spatial domain and TPRs comparable to that of adaptive lasso across all sizes of spatial domain, yielding a poor overall selection performance. As the setting is getting more complex for \textsf{Scenario} \ref{sce2}, SCAD and MC+ present a good overall selection performance as they have quite large TPRs and small FPRs, close to that of adaptive lasso. In terms of predictive performance, RMSEs of SCAD and MC+ are small and close to that of adaptive lasso with an exception that the RMSEs are quite large in a small spatial domain with a small number of points. Overall, we recommend adaptive lasso and adaptive elastic net as methods of regularization for spatial Gibbs models. Moreover, in situations where the observed point pattern has a large number of points (clustered point pattern for e.g.) and the covariance matrix of the covariates has a complex structure (as in \textsf{Scenario} \ref{sce2}), we also recommend the non-convex penalties SCAD and MC+ as methods of regularization in addition to those recommended above.\newline
RMSEs, TPRs and FPRs of the adaptive lasso and adaptive elastic net are reported in Figures \ref{fig:sc1.cbic.ceric} and \ref{fig:sc2.cbic.ceric} in order to compare the performance of cBIC against cERIC. Considering \textsf{Scenario} \ref{sce1}, cERIC has the lowest TPRs (with adaptive elastic net) in a small spatial domain but also the lowest FPRs across all sizes of spatial domain. This is consistent with ERIC's aggressive shrinkage properties which greatly reduce the number of incorrectly-selected covariates at the risk of potentially not selecting some true covariates \citep{hui2015tuning}. In a moderate and large spatial domain, this disadvantage of cERIC with respect to TPR disappear due to high values of TPRs, which tend towards $100\%$ at a faster rate. For the more complex design in \textsf{Scenario} \ref{sce2}, cERIC significantly outperforms cBIC across all sizes of spatial domain in terms of variable selection. This improvement is largely driven by low FPRs and high TPRs for cERIC. In our setting, cBIC and cERIC perform comparably in terms of predictive performance as they have very competitive RMSEs across all \textsf{Scenarios} and sizes of spatial domain. Because of its good overall prediction and selection performances with adaptive lasso and adaptive elastic net, we recommend the use of cERIC for the tuning of the regularization parameter.    

%\begin{comment}
\setlength{\tabcolsep}{5pt}
\renewcommand{\arraystretch}{1.5}
\begin{table}[!ht]
\caption{Empirical prediction properties (Bias, SD, and RMSE) based on 500 replications of Strauss and Geyer models. The average number of points $n$ under each model is provided in the last column.}
\label{table:scen0} 
\centering
\begin{tabular}{c l c  ccc  c}
%\begin{tabular}{@{\extracolsep{1pt}}clccccc@{}}
\hline
\hline 
& &  & Scenario 0 (unregularized)\\
\hline
 \multicolumn{1}{c}{Spatial} & \multicolumn{1}{c}{Model} & \multicolumn{1}{c}{Interaction} &  \multicolumn{3}{c}{Prediction properties} & \multicolumn{1}{c}{Av. number} \\ 
 \cline{4-6}
domain &  & parameter & Bias & SD & RMSE & of points (n) \\ 
  \hline
  \hline
  \multirow{3}{*}{$W_1$} & Strauss & $\gamma=0.2$ & 0.09 & 0.49 & 0.5 & 101 \\ 
   & Strauss & $\gamma=0.5$ & 0.11 & 0.36 & 0.38 &  138  \\ 
  % & Area & $\eta=1.2$ & 0.1& 0.76&0.77 & 596 \\ 
   & Geyer  & $\gamma=1.5$ & 0.07 & 0.46 & 0.47 &  750  \\ 
\hline
  \multirow{3}{*}{$W_2$} & Strauss & $\gamma=0.2$ & 0.01 & 0.24 & 0.24 &  395 \\ 
   & Strauss & $\gamma=0.5$ & 0 & 0.17 & 0.17 &  540  \\ 
   %& Area & $\eta=1.2$ &0.1 & 0.29& 0.31& 2377 \\ 
  & Geyer  & $\gamma=1.5$ & 0.01 & 0.19 & 0.19 &  2968 \\ 
   \hline
  \multirow{3}{*}{$W_3$} & Strauss & $\gamma=0.2$ & 0.02 & 0.13 & 0.13 & 1137 \\ 
   & Strauss & $\gamma=0.5$ & 0.01 & 0.1 & 0.10 & 1484  \\ 
   %& Area & $\eta=1.2$ & & & &  \\ 
  & Geyer  & $\gamma=1.5$ & 0.03 & 0.09 & 0.09 &  5630 \\ 
   \hline
 \end{tabular}
\end{table}
%\end{comment}

%\begin{comment}
\begin{figure}[!ht]
\begin{center}
\renewcommand{\arraystretch}{0}
\includegraphics[width=1\textwidth]{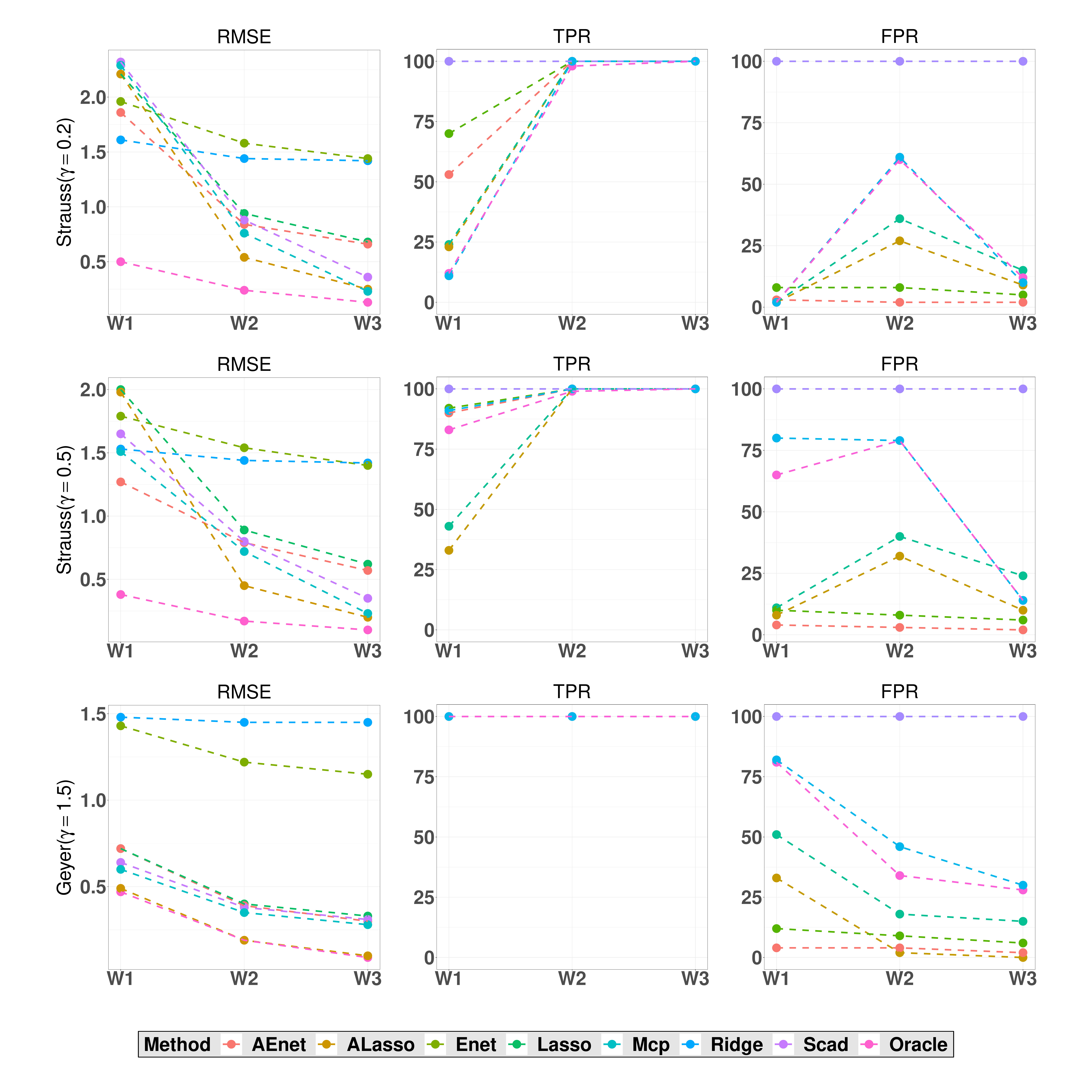} 
\caption{Empirical RMSE, TPR and FPR in terms of the observation window from Scenario \ref{sce1} (left to right) based on 500 replications of inhomogeneous Strauss models with $\gamma=0.2$ (first row) and $\gamma=0.5$ (second row), and inhomogeneous Geyer model with $\gamma=1.5$ (third row) using composite BIC.}
\label{fig:sc1.cbic}
\end{center}
\end{figure}
%\end{comment}

%\begin{comment}
\begin{figure}[!ht]
\begin{center}
\renewcommand{\arraystretch}{0}
\includegraphics[width=1\textwidth]{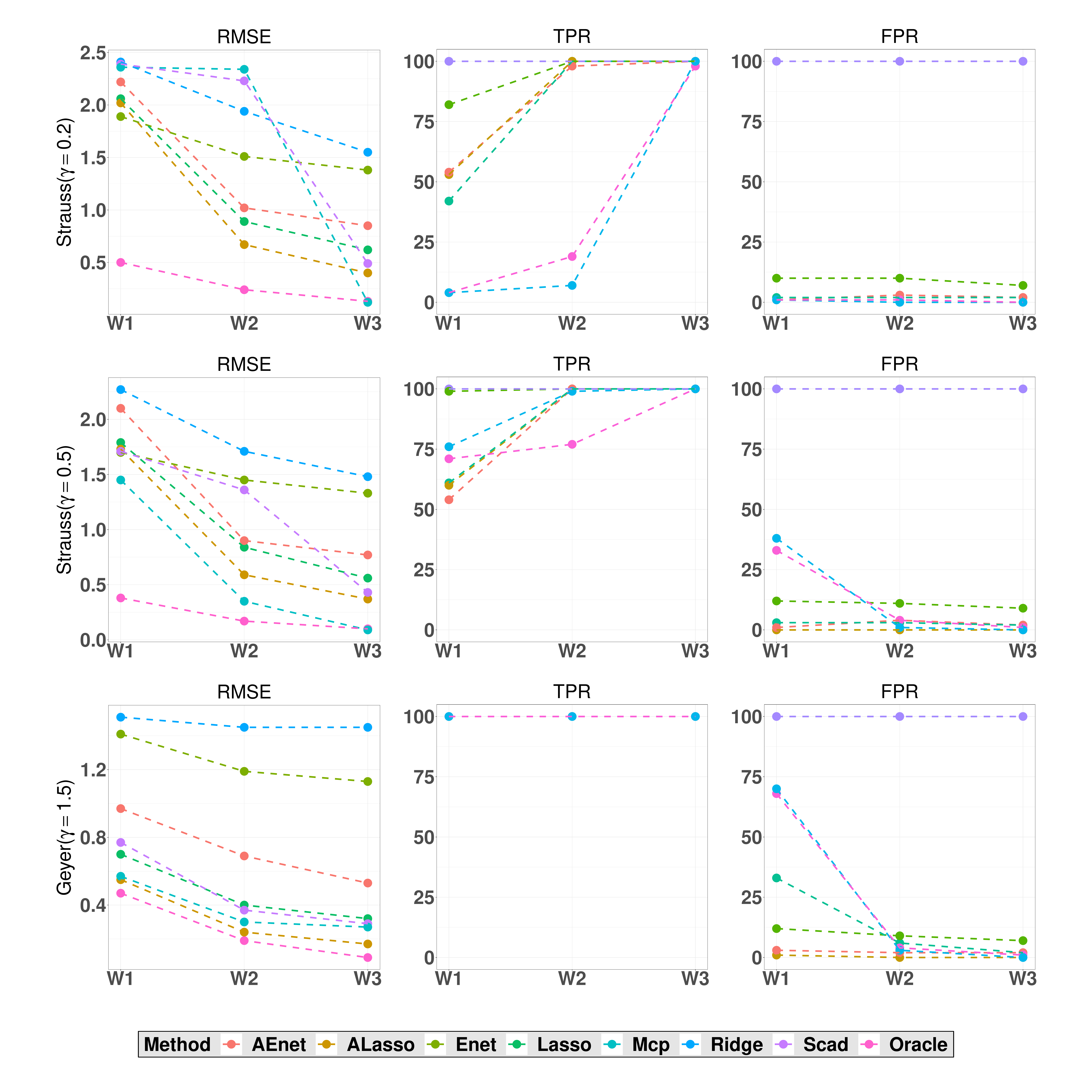} 
\caption{Empirical RMSE, TPR and FPR in terms of the observation window  from Scenario \ref{sce1} (left to right) based on 500 replications of inhomogeneous Strauss models with $\gamma=0.2$ (first row) and $\gamma=0.5$ (second row), and inhomogeneous Geyer model with $\gamma=1.5$ (third row) using composite ERIC.}
\label{fig:sc1.ceric}
\end{center}
\end{figure}
%\end{comment}

\begin{figure}[!ht]
\begin{center}
\renewcommand{\arraystretch}{0}
\includegraphics[width=1\textwidth]{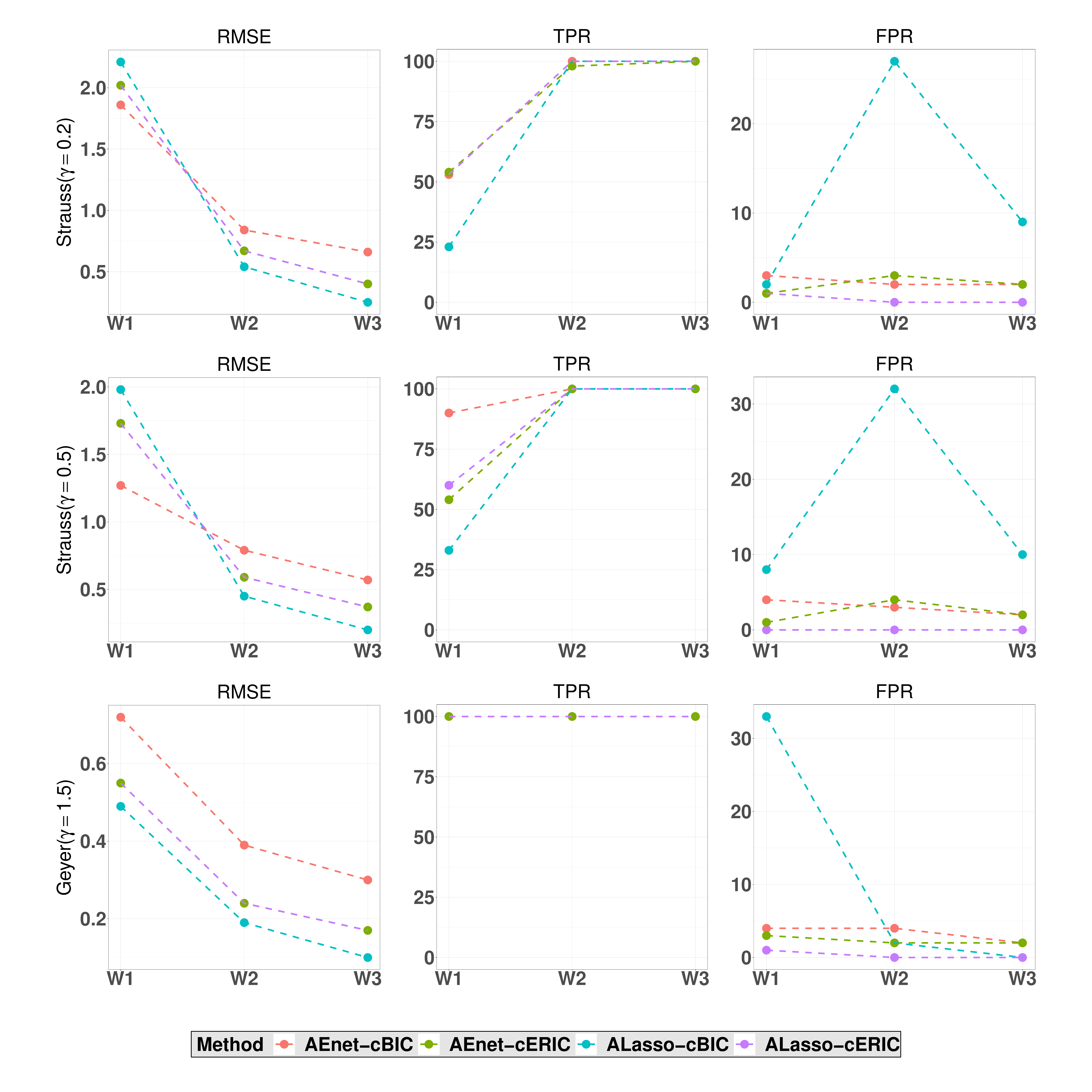} 
\caption{Empirical RMSE, TPR and FPR in terms of the observation window  from Scenario \ref{sce1} (left to right) for the ALasso and AEnet regularization methods. Results are based on 500 replications of inhomogeneous Strauss models with $\gamma=0.2$ (first row) and $\gamma=0.5$ (second row), and inhomogeneous Geyer model with $\gamma=1.5$ (third row) using composite BIC and ERIC.}
\label{fig:sc1.cbic.ceric}
\end{center}
\end{figure}

%\begin{comment}
\begin{figure}[!ht]
\begin{center}
\renewcommand{\arraystretch}{0}
\includegraphics[width=1\textwidth]{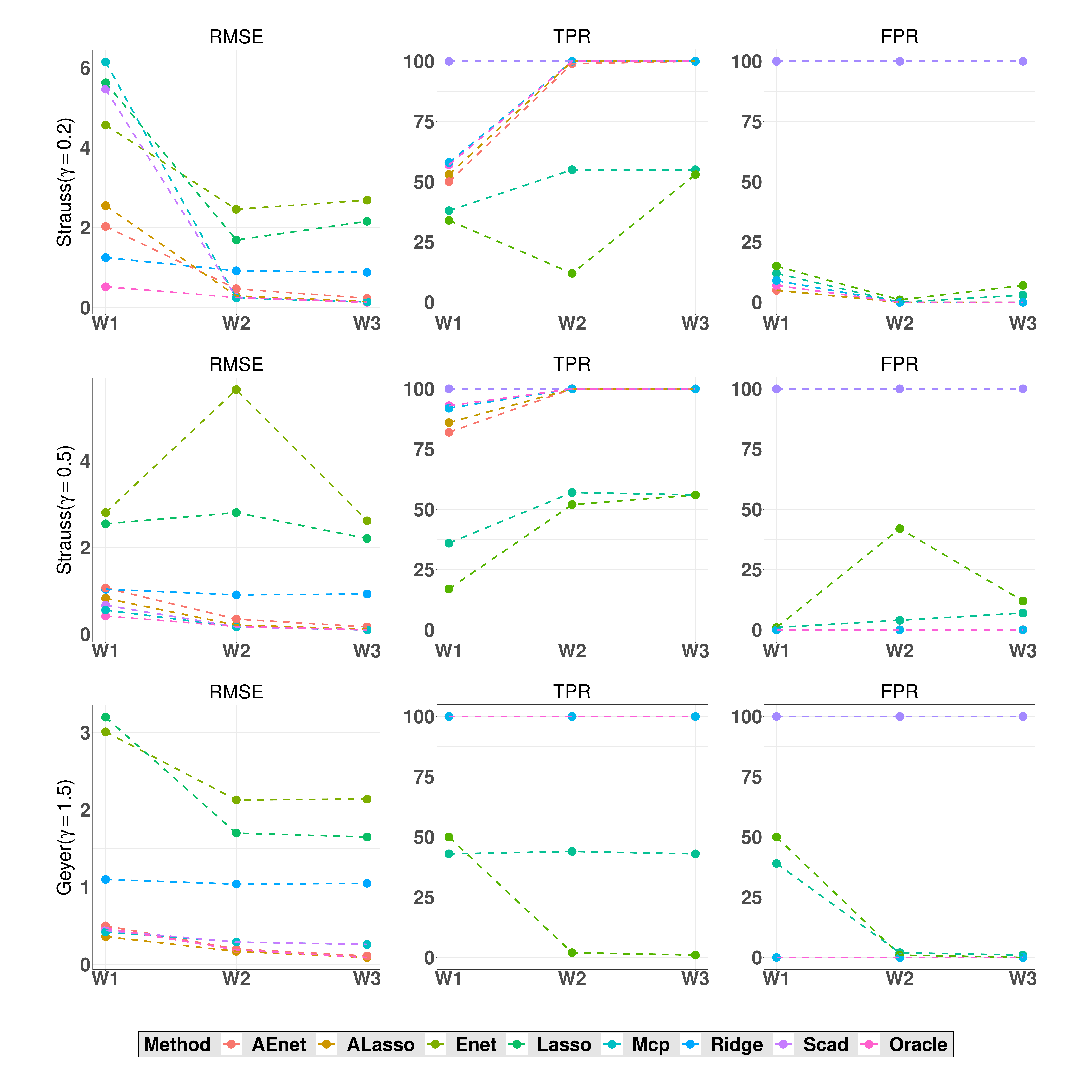} 
\caption{Empirical RMSE, TPR and FPR in terms of the observation window  from Scenario \ref{sce2} (left to right) based on 500 replications of inhomogeneous Strauss models with $\gamma=0.2$ (first row) and $\gamma=0.5$ (second row), and inhomogeneous Geyer model with $\gamma=1.5$ (third row) using composite BIC.}
\label{fig:sc2.cbic}
\end{center}
\end{figure}
%\end{comment}

%\begin{comment}
\begin{figure}[!ht]
\begin{center}
\renewcommand{\arraystretch}{0}
\includegraphics[width=1\textwidth]{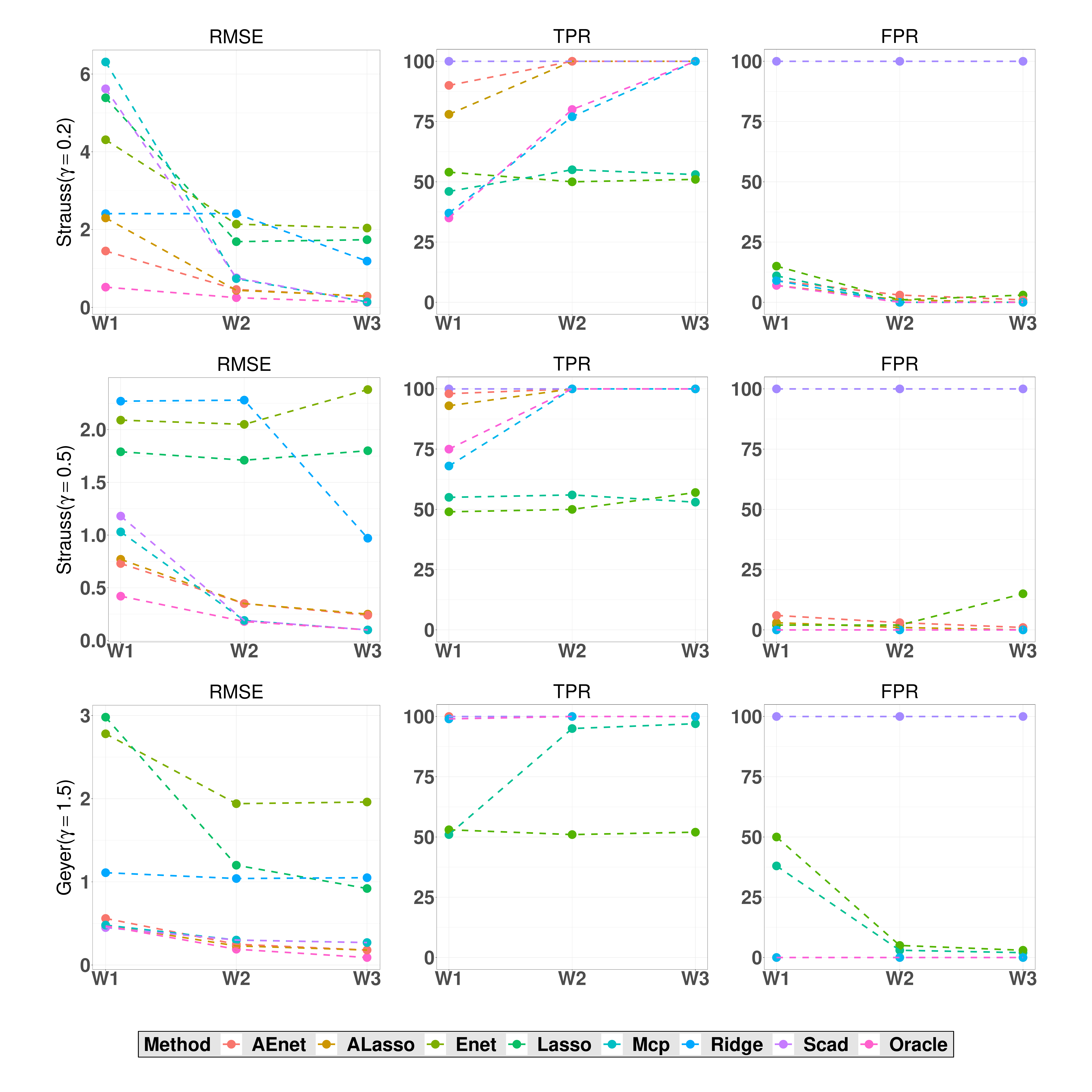} 
\caption{Empirical RMSE, TPR and FPR in terms of the observation window  from Scenario \ref{sce2} (left to right) based on 500 replications of inhomogeneous Strauss models with $\gamma=0.2$ (first row) and $\gamma=0.5$ (second row), and inhomogeneous Geyer model with $\gamma=1.5$ (third row) using composite ERIC.}
\label{fig:sc2.ceric}
\end{center}
\end{figure}
%\end{comment}

\begin{figure}[!ht]
\begin{center}
\renewcommand{\arraystretch}{0}
\includegraphics[width=1\textwidth]{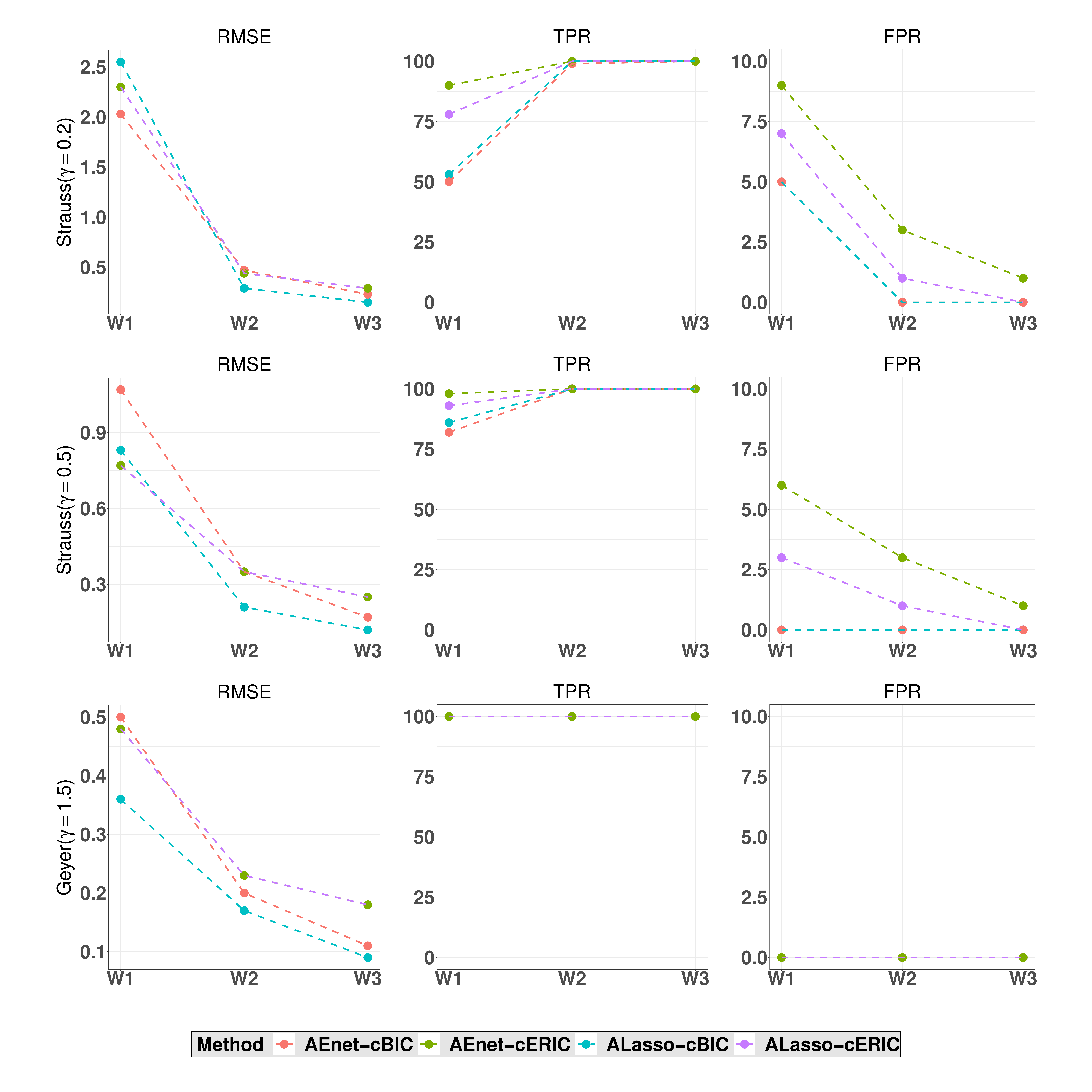} 
\caption{Empirical RMSE, TPR and FPR in terms of the observation window  from Scenario \ref{sce2} (left to right) for the ALasso and AEnet regularization methods. Results are based on 500 replications of inhomogeneous Strauss models with $\gamma=0.2$ (first row) and $\gamma=0.5$ (second row), and inhomogeneous Geyer model with $\gamma=1.5$ (third row) using composite BIC and ERIC.}
\label{fig:sc2.cbic.ceric}
\end{center}
\end{figure}

\subsection{Penalized logistic composite likelihood}

The focus here is to compare the regularized pseudo-likelihood estimator (RPE) and the regularized logistic estimator (RLE) in a fixed spatial domain with a different number of dummy points. More precisely, we compare their predictive performance and selection properties.  \newline
The simulation setting is a particular case of that of \textsf{Scenario} \ref{sce2} in the sense that we consider only the spatial domain $W = [0,250] \times [0,125]$. The number of covariates is set at $p= \lfloor 3 \vert W  \vert^{1/4} \rfloor$, that is $p=39$. We consider the same Gibbs models presented in Section \ref{sim:setup}, all with true Papangelou conditional intensity at the form $\lambda_{\boldsymbol{\theta}}(u,\mathbf{x}) = \exp(\beta_0 \, + 2 z_1(u) + 0.75 z_2(u) + \psi \,s(u,\mathbf{x}))$. The intercept $\beta_0$ is fixed so that  the Poisson point patterns have $900$ points on average. Note that $\psi=\log(\gamma)$ where $\gamma=0.2,0.5,1.5$; this corresponds respectively to two Strauss models and a Geyer model. We simulate $500$ point patterns from each model within spatial domain $W$ and consider three different choices for the number of dummy points $\mathsf{nd}^2$ in order to fit the conditional intensity to the simulated point patterns using both the pseudo-likelihood and the logistic composite likelihood functions. More precisely, we choose $\mathsf{nd} =15,30,60$ since the average number of points under the Poisson model is $900=30^2$. These choices correspond respectively to $\mathsf{nd}^2 < \pi$, $\mathsf{nd}^2 \approx \pi$ and $\mathsf{nd}^2 > \pi$, where $\pi$ is the expected number of data points (under the Poisson case). The latter point allows to compare the performance of the two fitting methods when the number of dummy points is less than, equal to and greater than the number of data points in an unregularized setting. Indeed, \citet{baddeley2014logistic} have shown that the logistic (composite) likelihood method is preferable to the (pseudo)-likelihood method in situations where the number of data points is very large and in presence of Gibbs model generating strong interactions, as it requires a small number of dummy points to perform quickly and efficiently. The aim of this section is to investigate these results in a regularized setting. \newline
From each simulated pattern, a regularization path is fitted using both the penalized pseudo-likelihood (PPL) and the penalized logistic composite likelihood (PLCL) with adaptive lasso penalty. The choice of the optimal model from each path is based on cERIC and finally, we use the same selection and prediction indices examined in Section \ref{sim:setup} to evaluate the selection and the prediction performances of the two regularized fitting methods. \newline             
Table~\ref{table:logis} displays the results of the selection and prediction indices for the RPE and the RLE. It is worth noticing that across all models, PPL and PLCL have respectively the lowest FPRs and the highest TPRs for all values of $\mathsf{nd}$. For the Geyer model, PPL and PLCL perform comparably for all values of $\mathsf{nd}$ in terms of selection performance. The FPRs of PLCL and PPL are close while their TPRs are very different across all values of $\mathsf{nd}$ for the Strauss models. This yields a good overall selection performance for PLCL. The selection performance of both methods improves as the number of dummy points increases. This improvement is largely driven by the values of TPRs, which increases at a faster rate for PLCL and at a lower rate for PPL. In terms of predictive performance, PLCL outperforms PPL across the two Strauss models for all values of $\mathsf{nd}$. PLCL has smaller bias and lower RMSE overall, despite their larger SD. For the Geyer model, PLCL slightly outperforms PPL as they have competitive RMSEs across all values of $\mathsf{nd}$ even if PLCL still has smaller bias and larger SD. Across all three models, estimates of PLCL are less biased than that of PPL. In situations where the number of observed data points is very large, we would recommend to apply  both PPL and PLCL methods with a preference for PLCL method when (a) $\mathsf{nd}^2 \approx \pi$ or $\mathsf{nd}^2 > \pi$ and PLCL method only when (b) $\mathsf{nd}^2 < \pi$. Note that (a) requires intensive computation and it seems more reasonable to choose (b). Otherwise, we recommend the use of PLCL for the regularization of spatial GPPs.             

%\begin{comment}
\setlength{\tabcolsep}{3pt}
\renewcommand{\arraystretch}{1.5}
\begin{table}[!ht]
\caption{Empirical prediction properties (Bias, SD, and RMSE) and empirical selection properties (TPR, and FPR in $\%$) based on 500 replications of Strauss and Geyer models on spatial domain $W=[0,250] \times [0,125]$ using cERIC for $nd=15,\,30,\,60$. Penalized pseudo and logistic composite likelihoods are considered with adaptive lasso.}
\label{table:logis} 
\centering
\begin{tabular}{@{\extracolsep{1pt}}l c  c  | ccccc | ccccc @{}}
\hline
\hline 
 \multicolumn{1}{l}{nd} & \multicolumn{1}{c}{Model} & \multicolumn{1}{c}{Interaction}  & \multicolumn{5}{c}{PPL}  & \multicolumn{5}{c}{PLCL} \\ 
 \cline{4-8} \cline{8-13}
 &  & parameter  & Bias & SD & RMSE & FPR & TPR &  Bias & SD & RMSE & FPR & TPR  \\ 
  \hline
  \hline
  \multirow{3}{*}{$15$} & Strauss & $\gamma=0.2$  & 2.2 & 0.29 & 2.22 & 0 & 36 & 0.95 & 0.65 & 1.15 & 3 & 64\\ 
   & Strauss & $\gamma=0.5$  & 1.87 & 0.16 &  1.88 & 0 & 47 & 1.23 & 1.13& 1.67& 3& 54\\ 
   & Geyer  & $\gamma=1.5$  & 0.5& 0.17 & 0.53  & 0& 100 &0.11 &0.39  & 0.41&0 & 100\\ 
   \hline
  \multirow{3}{*}{$30$} & Strauss & $\gamma=0.2$  &1.76 & 0.63 & 1.87& 0 & 48 & 0.26 & 0.63 & 0.68 & 3 & 99\\ 
   & Strauss & $\gamma=0.5$ &1.61 & 0.2& 1.62& 0&50&0.22 & 0.33& 0.4& 2& 100\\ 
  & Geyer  & $\gamma=1.5$  & 0.34&0.25& 0.42&0 & 100& 0.08& 0.39&0.4 & 0&100 \\ 
  \hline
  \multirow{3}{*}{$60$} & Strauss & $\gamma=0.2$  & 1.58&0.66&1.71& 1&50& 0.22&0.44& 0.49& 3& 99\\ 
   & Strauss & $\gamma=0.5$ &1.49 & 0.27& 1.51& 1& 51&0.17 & 0.36& 0.4& 3& 100\\ 
  & Geyer  & $\gamma=1.5$ & 0.28&0.3 &0.41 & 0&100 &0.05 &0.37 & 0.37& 0& 100\\ 
 \hline
 \end{tabular}
\end{table}
%\end{comment}

%Figure \ref{fig:app} contains maps of the locations of $3604$ individual B. pendula trees, elevation, slope, and interaction between concentration of Phosphorus and potential Hydrogen, hereafter referred to as concentration of P$\times$pH. From the inspection of Figure \ref{fig:app}, B. pendula trees seem to appear in greater abundance in the areas of high elevation, steep slope, and low concentration of P$\times$pH.

\subsection{Large and small signals} 
The purpose of this section is to consider large as well as small values of the coefficients.
We consider a simulation study simpler than that of the previous sections and simulate a Strauss process with $R=9.25$ and $\psi=\log(0.2)$, which corresponds of a very repulsive model with true conditional intensity of the form $\lambda_{\mathbf{\theta}}=\exp(\beta_0 \, + \mathbf{\bbeta}^\top \mathbf{z}(u)  + \psi \, s(u,\mathbf{x}))$, where the length of $\mathbf{\bbeta}$ is fixed as in Section 6.1. We write $\mathbf{\bbeta}=(\mathbf{\bbeta}_1^\top,\mathbf{\bbeta}_2^\top)^\top$, where $\mathbf{\bbeta}_1$ and $\mathbf{\bbeta}_2$ are vectors of length $s$ and $q_k$ respectively. Recall that $q_k=p_k-s$ where $p_k= \lfloor 3 \vert W_k  \vert^{1/4} \rfloor$ for $k=1,2,3$ and $W_1=[0,250] \times [0,125]$, $W_2=[0,500] \times [0,250]$ and $W_3=[0,1000] \times [0,500]$. The $s \times 1$ vector $\mathbf{\bbeta}_1$ is set as $\mathbf{\bbeta}_1=(b,\cdots,b)^\top$, where $b \in \mathbb{R}$ and the $q_k \times 1$ vector $\mathbf{\bbeta}_2$ has all its  components zero, i.e. $\mathbf{\bbeta}_2=\mathbf{0}$. Now with this parametrization of $\mathbf{\bbeta}$, a small effect means a small value of $b$ while a large value of $b$ yields a large effect. It is worth mentioning that the setting for the coefficients in the previous simulation setups corresponds to $s=2$ and $\mathbf{\bbeta}_1=(2,0.75)^\top$. In order to address the aim of this section, we consider for each choice of $s$ the following cases: i) $b_1=0.25$,  ii) $b_2=0.5$, iii) $b_3=1$, and iv) $b_4=1.5$; which will result in eight examples for choices of $s=2$ and $s=5$. Note that in theses examples, the first two covariates are the elevation and slope covariates; the remaining covariates are generated using Scenario~\ref{sce1}. For a spatial domain $W$, we define $I_i = \int_W \exp(b_i  \, \mathbf{1}^\top z(u)) \mathrm{d}u$ and fix the intercept $\beta_0$ so that under the Poisson model, we have on average $500 \times \frac{I_i}{I_2}$ points in $W_1$, $2000 \times \frac{I_i}{I_2}$ points in $W_2$ and $4000 \times \frac{I_i}{I_2}$ points in $W_3$ where $i=1,2,3,4$. For each example, we simulate 500 point patterns from each model within spatial domains $W_1$, $W_2$ and $W_3$. We only consider the penalized pseudo-likelihood method with the adaptive lasso penalty to fit the regularization path for each simulated pattern. We use cERIC to select the tuning parameter as recommended earlier in the present paper. The number of dummy points is chosen as described in Section 6.1. Finally, we use the selection and prediction indices defined in Section 6.1 to measure the estimation accuracy and gauge the variable selection performance. \newline
Figure  \ref{fig:ls.signal} reports RMSEs and TPRs of the regularized pseudo-likelihood estimator under the ALasso-cERIC method for large as well as small values of the coefficients. It can be seen that when the spatial domain grows and the number of points increases, RMSEs tend to diminish, which is expected according to the consistency of the regularized pseudo-likelihood estimator for Gibbs point process models. Moreover, TPRs are improved as the magnitude of the coefficients and the size of the spatial domain are getting larger. That is also expected and it means that our procedure tends to recover strong signals more frequently than weak signals. Note that FPRs are also computed; they are low all the time with a maximum value of $4\%$.

\begin{figure}[!ht]
\begin{center}
\renewcommand{\arraystretch}{0}
\includegraphics[width=1\textwidth]{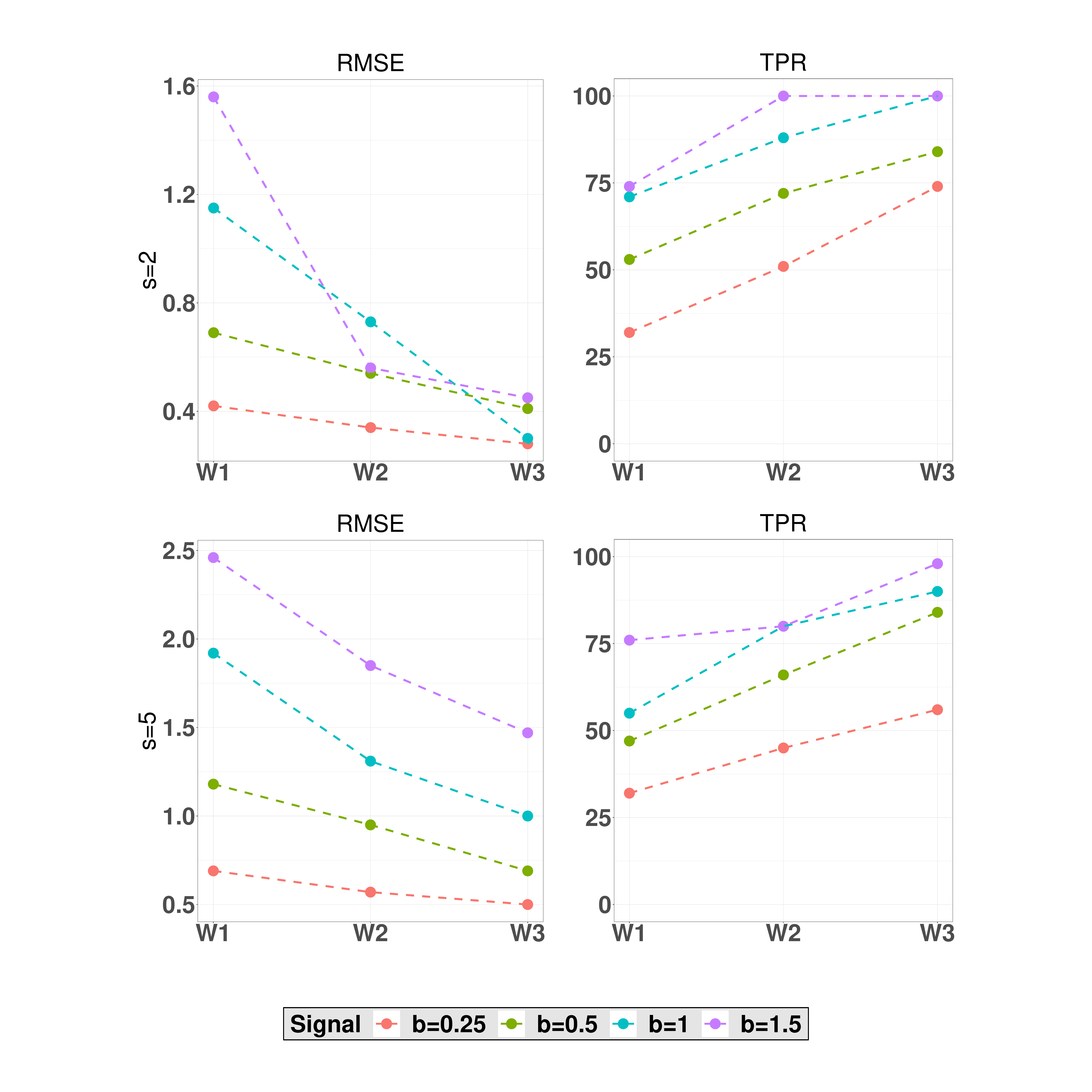} 
\caption{Empirical RMSE, TPR in terms of the observation window  for different magnitude of the coefficients: $b=0.25$, $b=0.5$, $b=1$, and $b=1.5$. Results are based on 500 replications of inhomogeneous Strauss model with $\gamma=0.2$ using ALasso-cERIC.}
\label{fig:ls.signal}
\end{center}
\end{figure}

\section{Application to a real data set} \label{sec:data}

In order to illustrate our approach, we consider the well-known data obtained from comprehensive census program conducted on the tropical rainforest of the Barro Colorado Island (BCI), Panama. Censuses are carried out on a site of $50$ hectares ($W=1000 m \, \times \, 500m$) where all freestanding woody stems at least $1 \, cm$ diameter at breast height were identified, measured, and mapped. This results to a very rich dataset consisting of records of more than $350,000$ individual trees of over $300$ species \cite[see][]{condit1998tropical,hubbell1999light,hubbell2005barro}. The focus here is to analyze the locations of $3604$ individual trees of the species Beilschmiedia pendula (Lauraceae). So, it seems interesting to know how this high number of trees profits from environmental factors such as topography, soil properties, etc. Also, regarding the relatively large number of environmental factors, an important question should concern the selection of some covariates among them as well as the estimation of their coefficients. The spatial distribution of B. pendula has been extensively analysed using spatial point processes in both the unregularized~\cite[][]{moller2007modern,waagepetersen2007estimating,hengl2009spatial,guan2010weighted} and regularized settings~\cite[][]{thurman2014variable,thurman2015regularized,choiruddin2018convex,daniel2018penalized}. Inhomogeneous Poisson process models, Cox point process models and area-interaction point process model were used in these analyses to describe spatial dependence among trees. In our analysis, we use a Geyer saturation point process model to describe spatial dependence among trees and model the conditional intensity of B. pendula trees as a log-linear function of $93$ covariates consisting of $2$ topological attributes, $13$ soil nutrients, and $78$ interactions between two soil nutrients.  \newline
We precisely fit a regularized Geyer saturation model with threshold $\sigma=1$ and interaction radius $r=10\,m$ to the B. pendula data using the PLCL method with adaptive lasso and adaptive elastic net penalties, hereafter referred to respectively as the PLCL-ALASSO and PLCL-AENET methods. The tuning parameter is chosen using cERIC and the number of dummy points is set at the default number of that of $\mathsf{ppm}$ function in the $\mathsf{spatstat}$ package, i.e. approximately $14400$ dummy points. All covariates are centered and scaled in order to identify those that have a significant effect on the conditional intensity. \newline
Table~\ref{table:app} reports the parameter estimates for the covariates selected by the PLCL-ALASSO  and PLCL-AENET methods. Out of $93$ covariates, $14$ covariates are selected using the PLCL-AENET method while only $8$ using the PLCL-ALASSO method. Note that the 8 covariates selected under ALASSO penalty are among the 14 selected under AENET penalty. Although the magnitudes of the parameter estimates for the common selected covariates are slightly different, the signs all agree with each other. Both models select a positive interaction term, which indicates clustering among trees. \newline
These results suggest that B. pendula conditional intensity is positively associated with elevation and slope: trees appear to grow more densely in  areas of higher elevation and slope, and negatively associated with soil pH: trees are more likely to live in areas with low concentration of pH in the soil. Furthermore, the appearance of B. pendula trees is positively associated with the interaction between aluminum and copper concentration. The rest of the common selected features are interactions between covariates which present a negative association with the conditional intensity of the B. pendula, for e.g. the interaction between aluminum and mineralized nitrogen, etc. We find some differences in estimation and selection between the PLCL-ALASSO and PLCL-AENET methods. First, the PLCL-AENET method generally has larger estimators, except for the interaction between phosphorus concentration and soil pH where the estimate obtained with the PLCL-ALASSO method has a higher value. Second, we lose some non-zero covariates under the PLCL-AENET method when we use the PLCL-ALASSO method, even though for most of these covariates the estimates are relatively small. Note that we use the absolute value for the comparison of the estimates. The maps of B. pendula trees as well as the fourteen covariates selected using the PLCL-AENET method are reported in Figure~\ref{fig:app}.\newline
In summary, we prefer the PLCL-AENET method to the PLCL-ALASSO method in the basis of the selection of soil pH covariate. From Figure~\ref{fig:app}, one can easily see that soil pH is higher in the centre of the plot and slightly lower on the left slope of the plot, which means a negative association with the appearance of B. pendula trees as noticed with the PLCL-AENET method.     

%\begin{comment}
\setlength{\tabcolsep}{6pt}
\renewcommand{\arraystretch}{1.5}
\begin{table}[!ht]
\caption{List of covariates selected by Geyer model fitted via penalized logistic composite likelihood with adaptive lasso and adaptive elastic net regularization using cERIC.}
\label{table:app}
\begin{center}
\begin{tabular}{ l @{\hskip 1.5in} c @{\hskip 1in}c  }
\hline
\hline
  & \multicolumn{2}{c}{\mbox{PLCL - Geyer(r=10, sat=1)}} \\
    \cline{2-3}
Covariates & ALASSO & AENET \\
  \hline
\hline
Elev  & 0.177  & 0.342  \\ 
  Slope  & 0.237  & 0.316   \\ 
  Al  & 0  & 0.01    \\ 
  pH   & 0  & -0.013  \\ 
  Al $\times$ Cu  & 0.107  & 0.251      \\ 
  Al $\times$ Fe & 0  & 0.026    \\ 
  Al $\times$ P & -0.057  & -0.218  \\ 
  Al $\times$ Zn  & -0.179 & -0.458  \\ 
  Al $\times$ N.min  & -0.041  & -0.157 \\ 
  Cu $\times$ Mg & 0  & -0.128   \\ 
  Fe $\times$ Mn  & 0 & 0.144   \\ 
  P $\times$ N  & -0.021  & -0.345  \\ 
  P $\times$ pH & -0.267 & -0.123   \\ 
  N $\times$ N.min  & 0  & 0.055    \\ 
\hline
Interaction  & 0.383 &  0.295\\
\hline
\end{tabular}
\end{center}
\end{table}
%\end{comment}

%\begin{comment}
\begin{figure}[!ht]
\begin{center}
\renewcommand{\arraystretch}{0}
\begin{tabular}{l @{\hskip 0.08in} l @{\hskip 0.08in} l @{\hskip 0.08in} l @{\hskip 0.08in} l}
\includegraphics[width=0.185\textwidth]{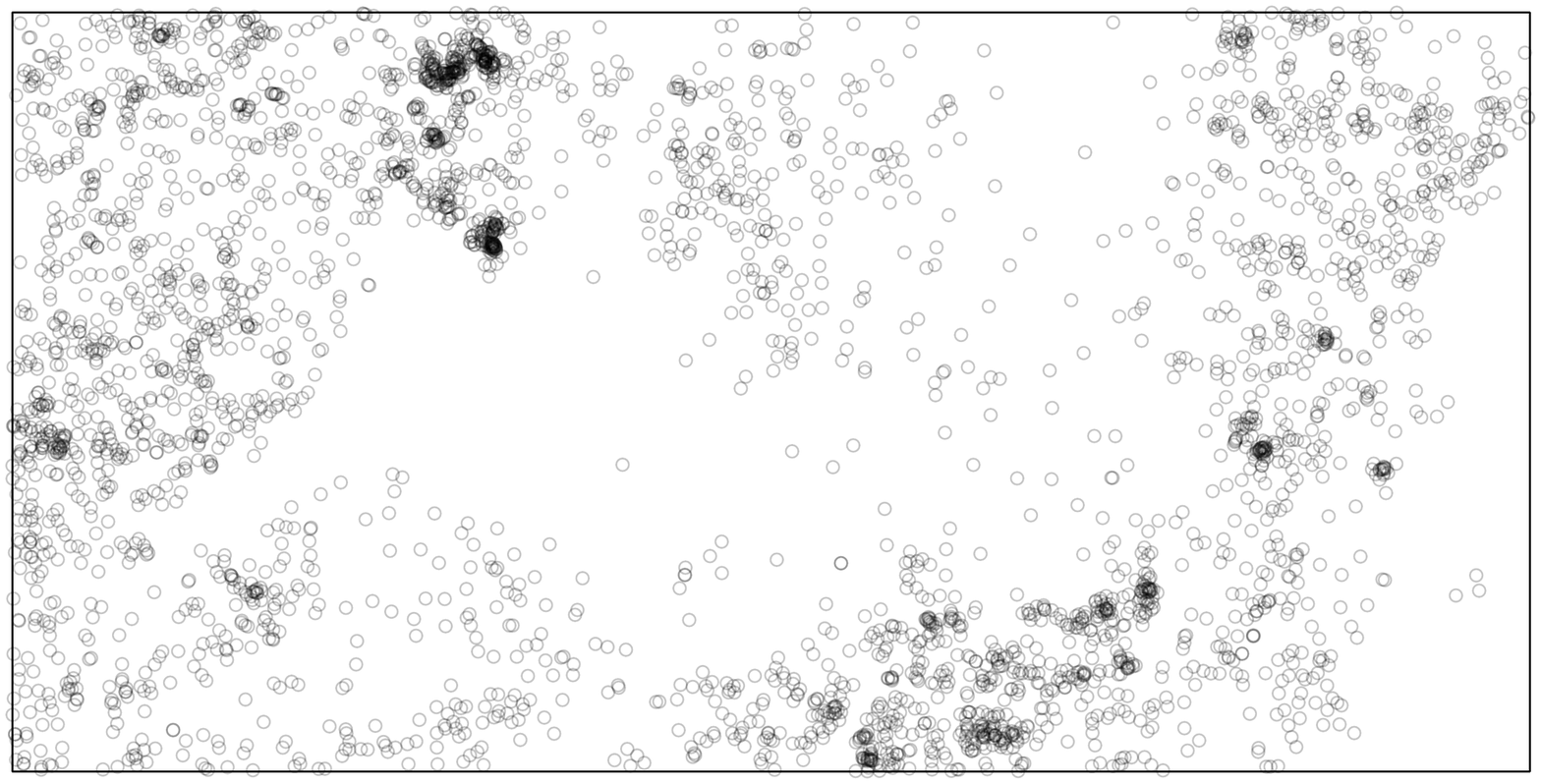} & \includegraphics[width=0.185\textwidth]{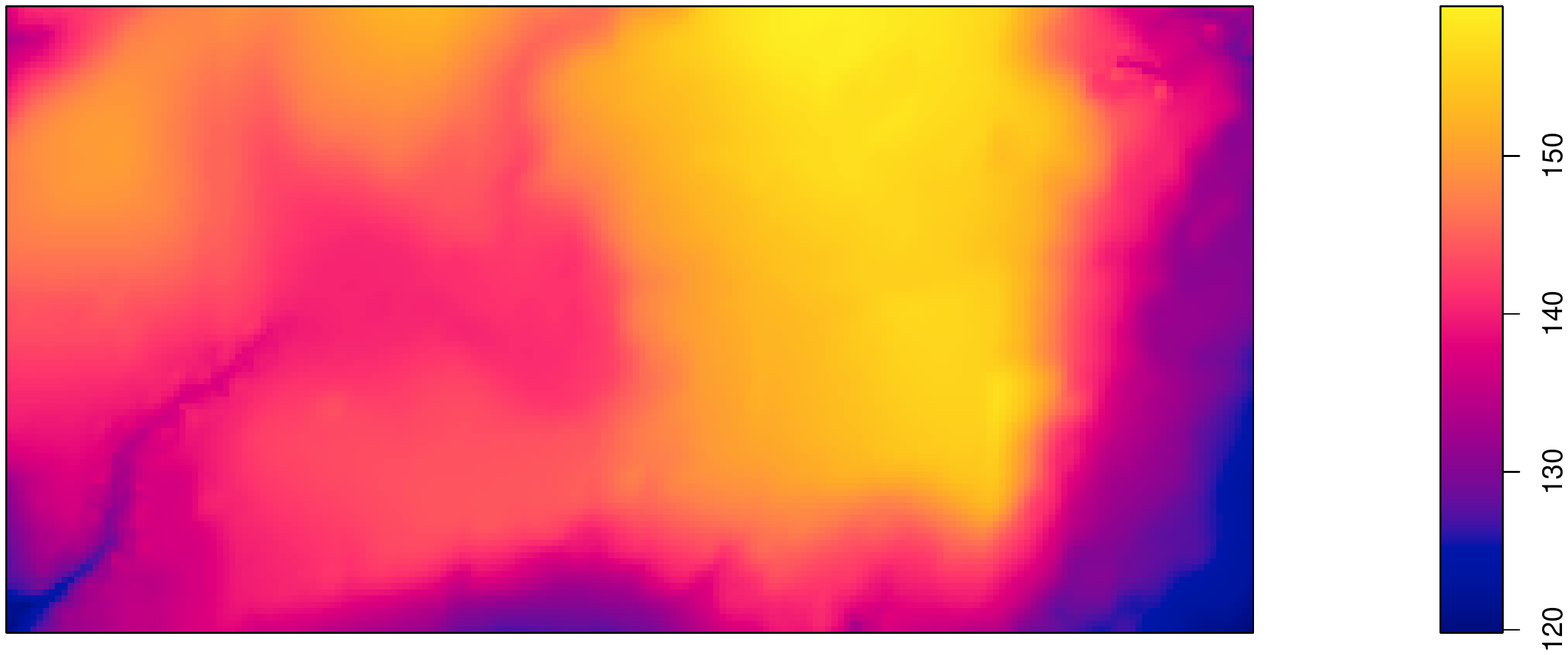} & \includegraphics[width=0.185\textwidth]{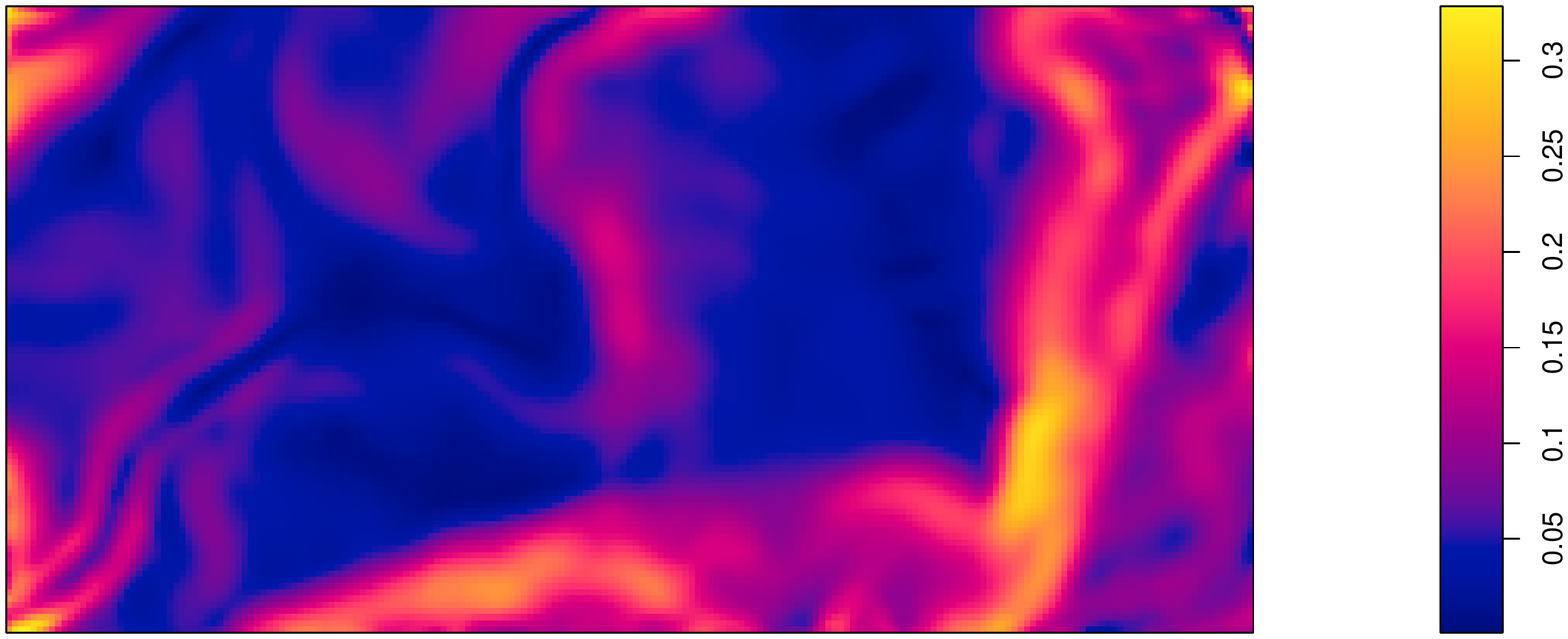} & \includegraphics[width=0.185\textwidth]{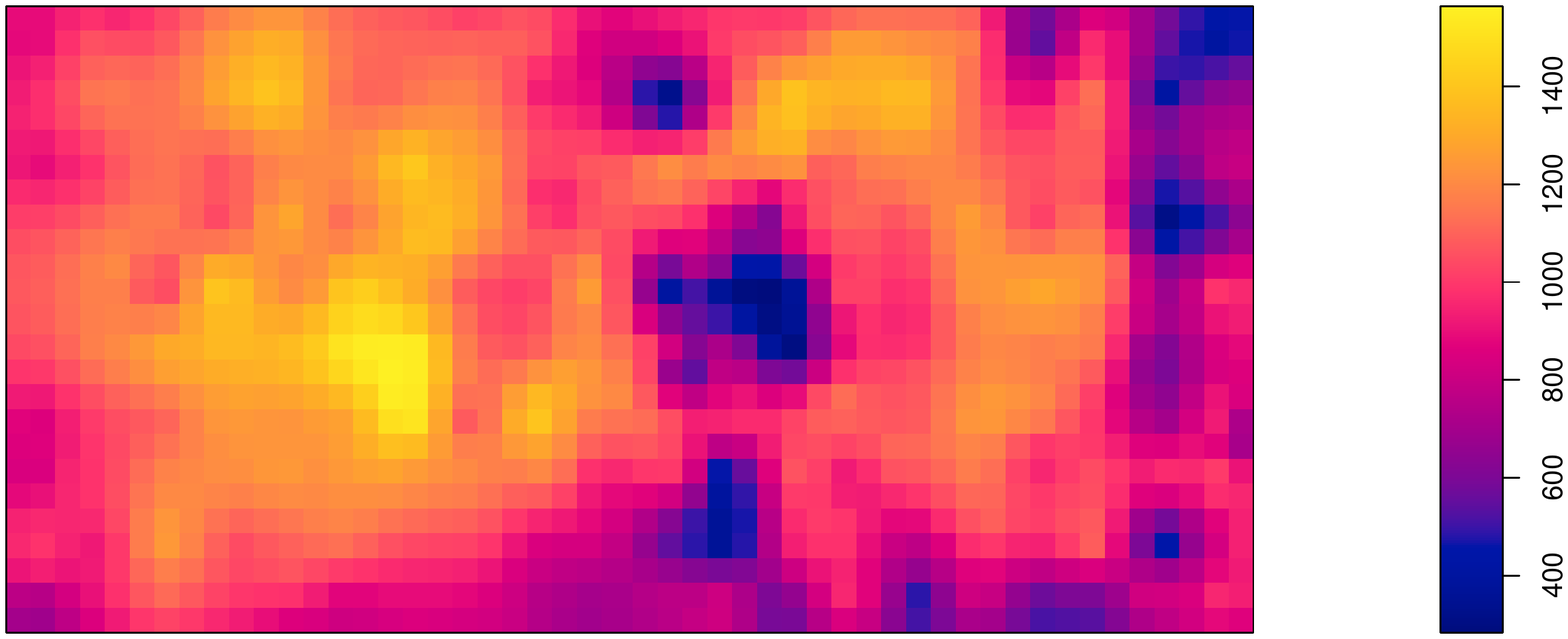} & \includegraphics[width=0.185\textwidth]{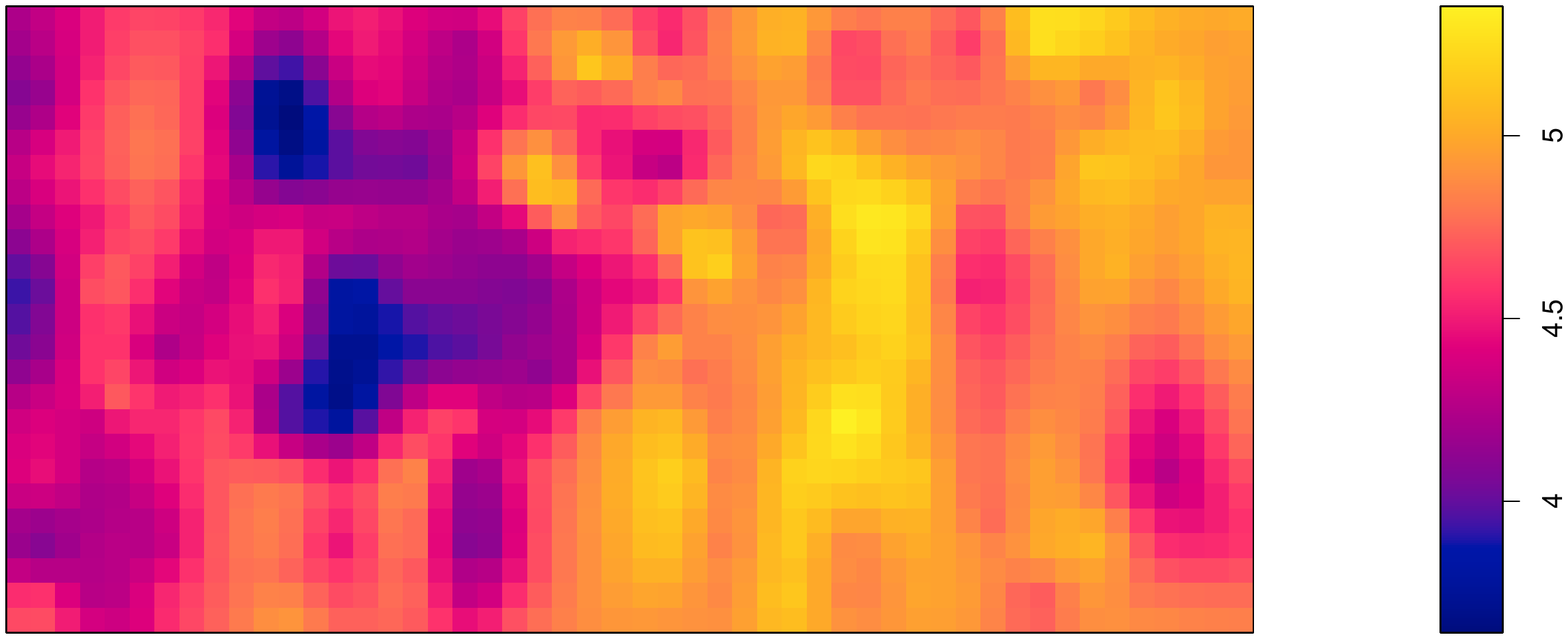} \\[-.3cm]
\includegraphics[width=0.185\textwidth]{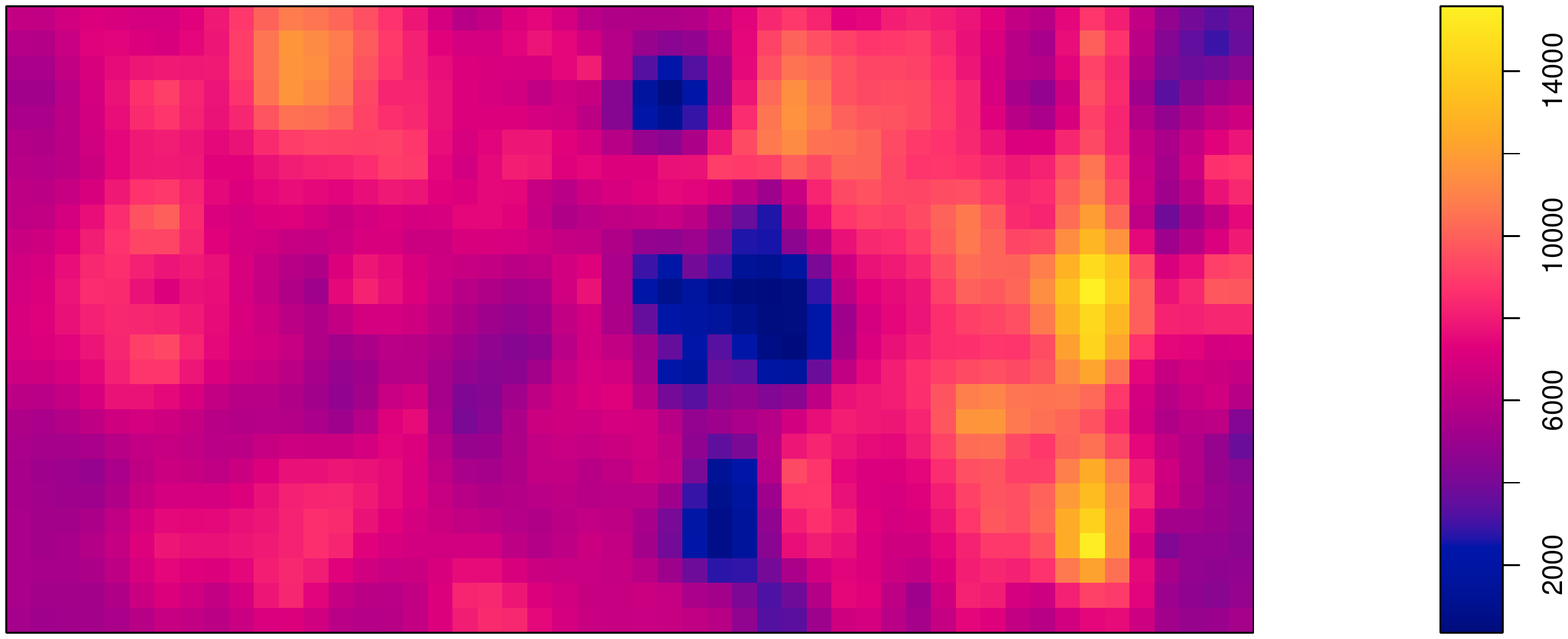} & \includegraphics[width=0.185\textwidth]{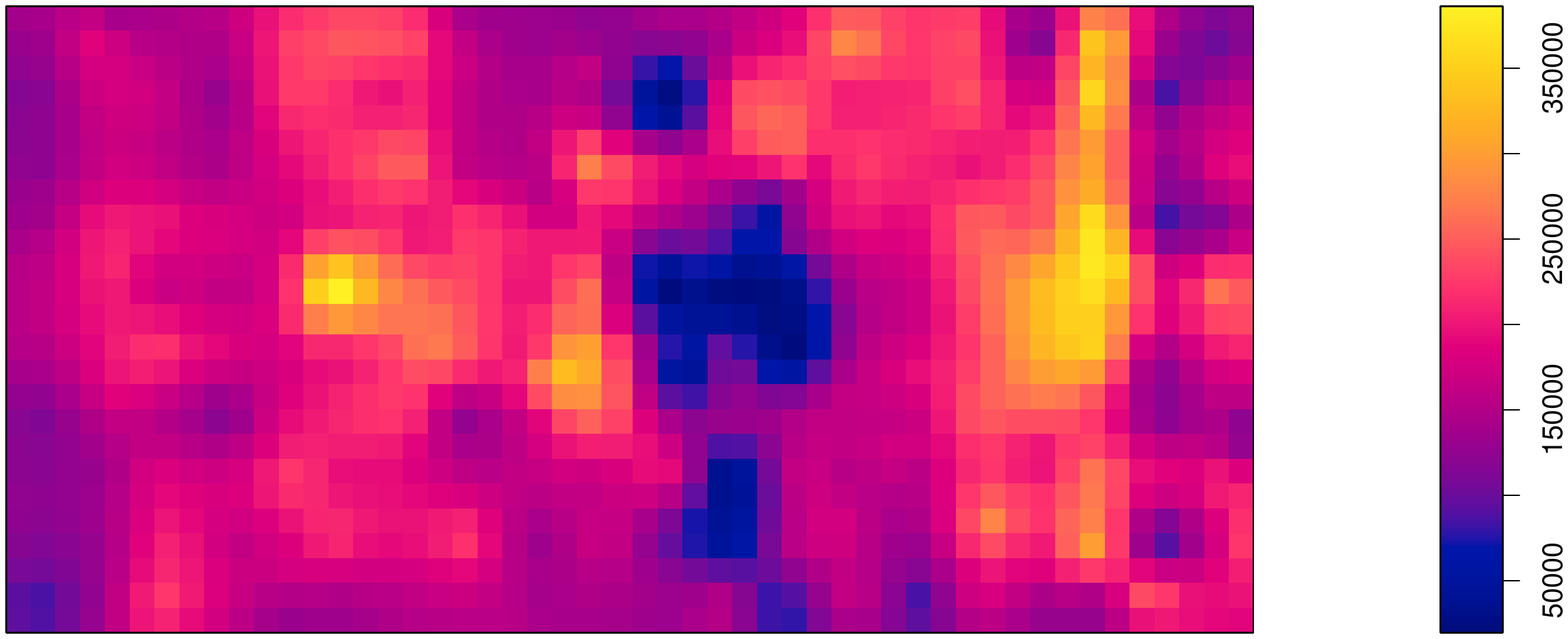} & \includegraphics[width=0.185\textwidth]{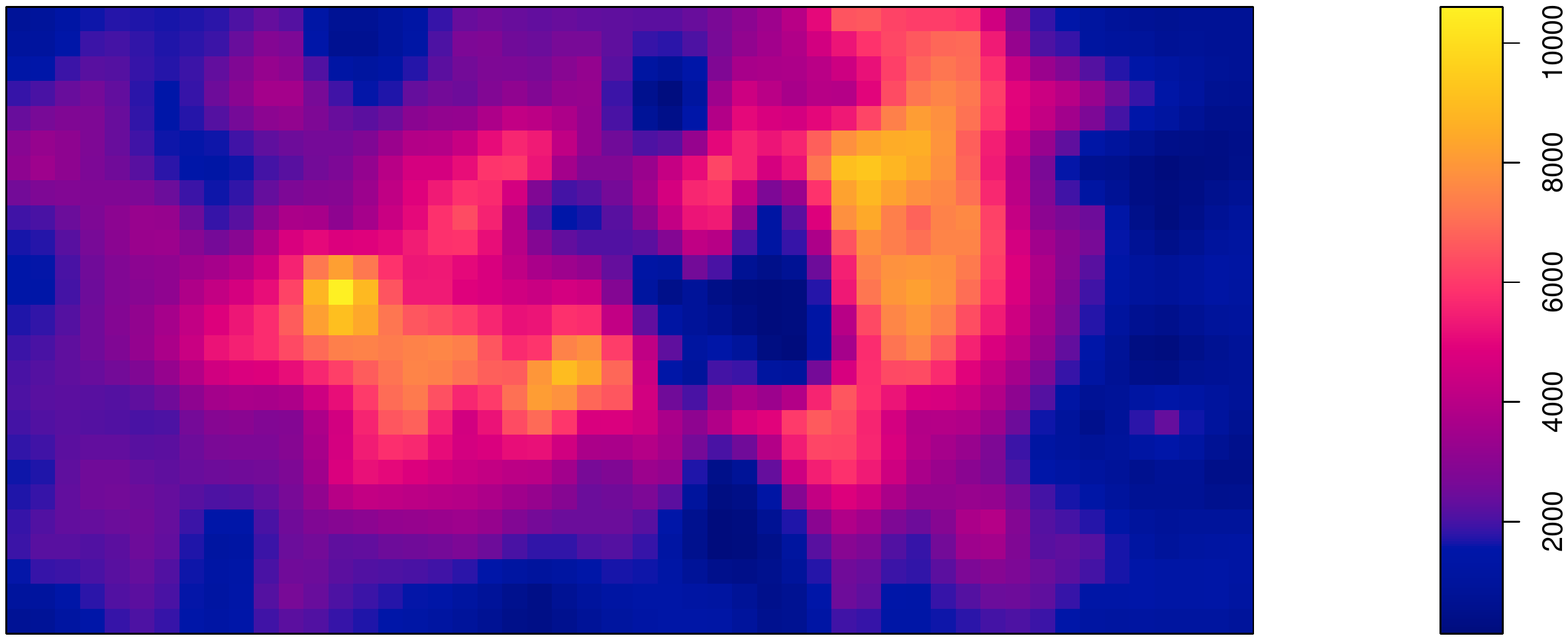} & \includegraphics[width=0.185\textwidth]{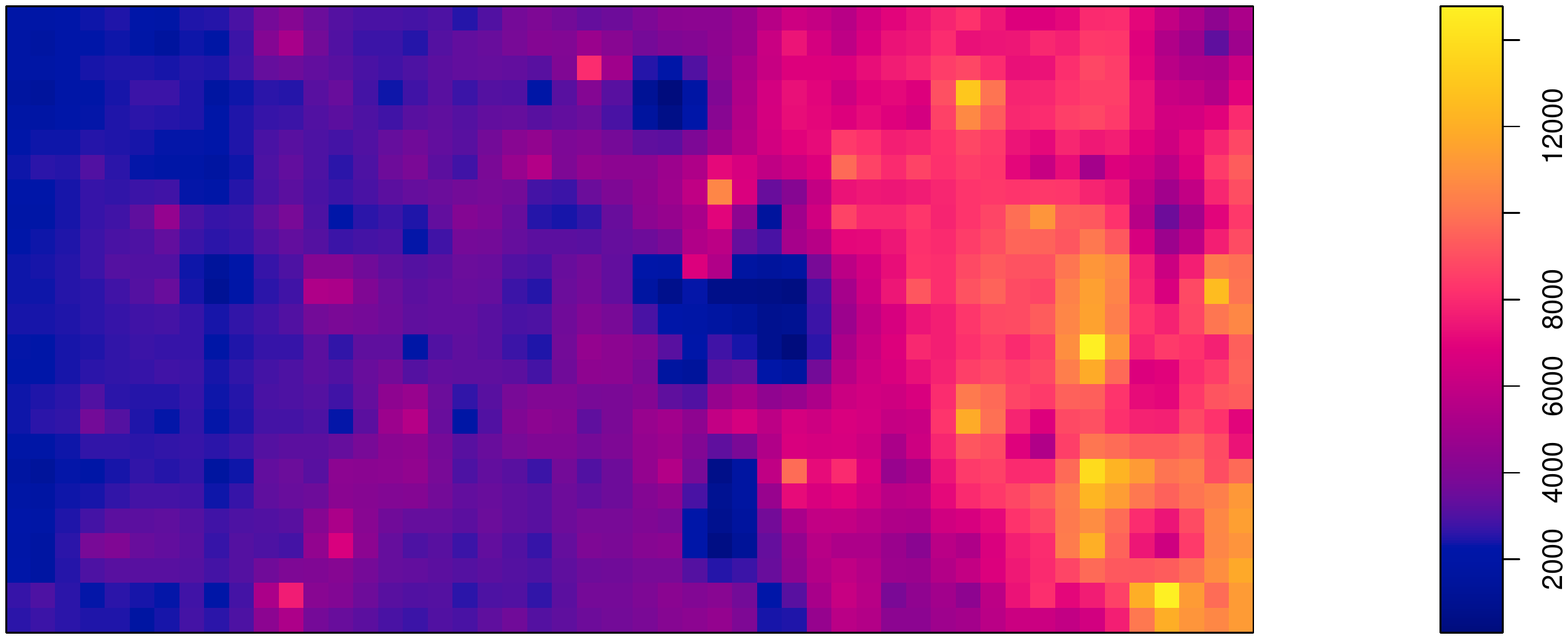} & \includegraphics[width=0.185\textwidth]{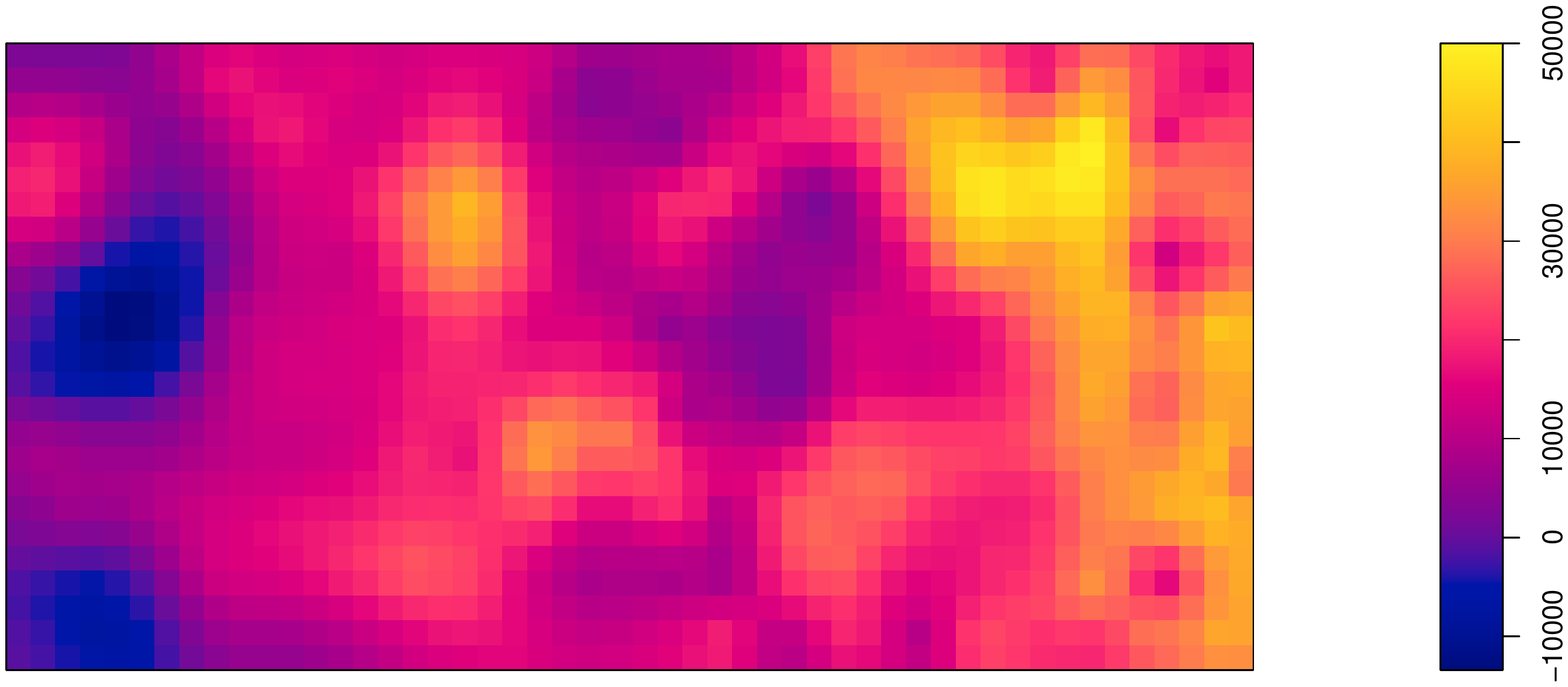} \\[-.5cm]
\includegraphics[width=0.185\textwidth]{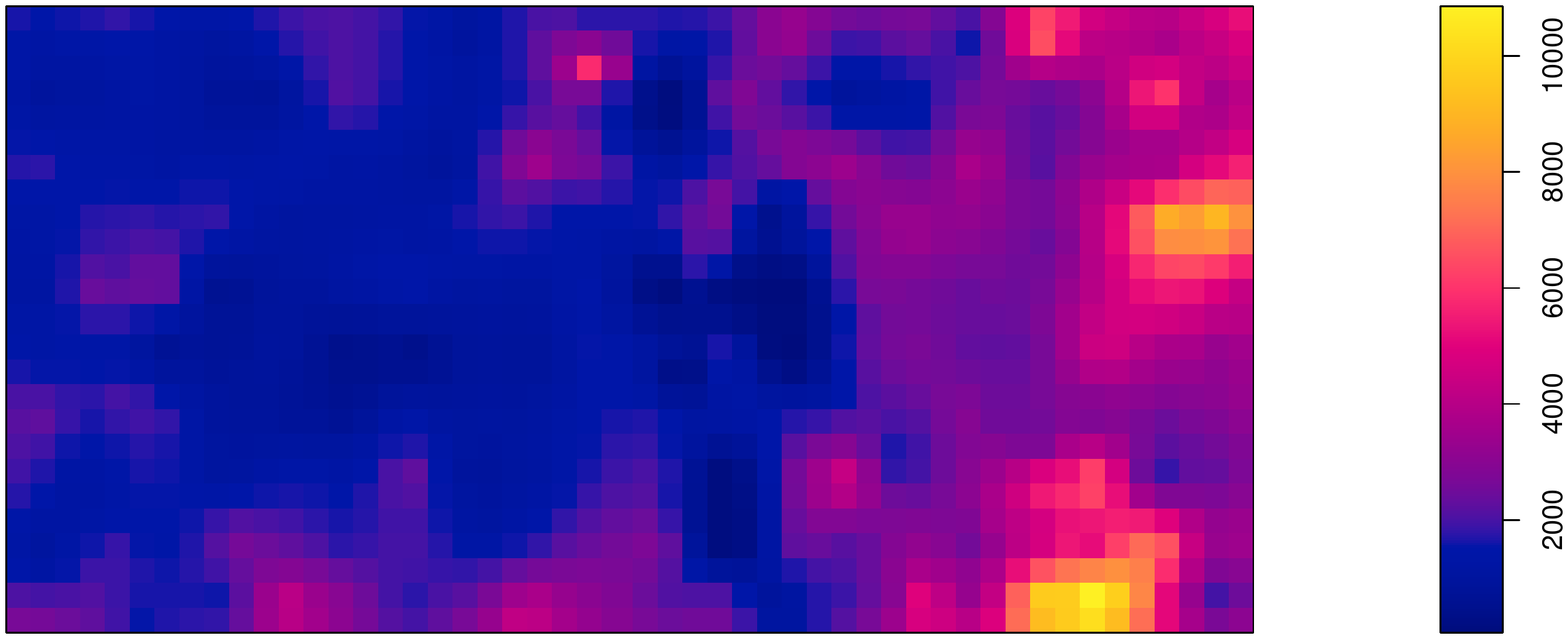} & \includegraphics[width=0.185\textwidth]{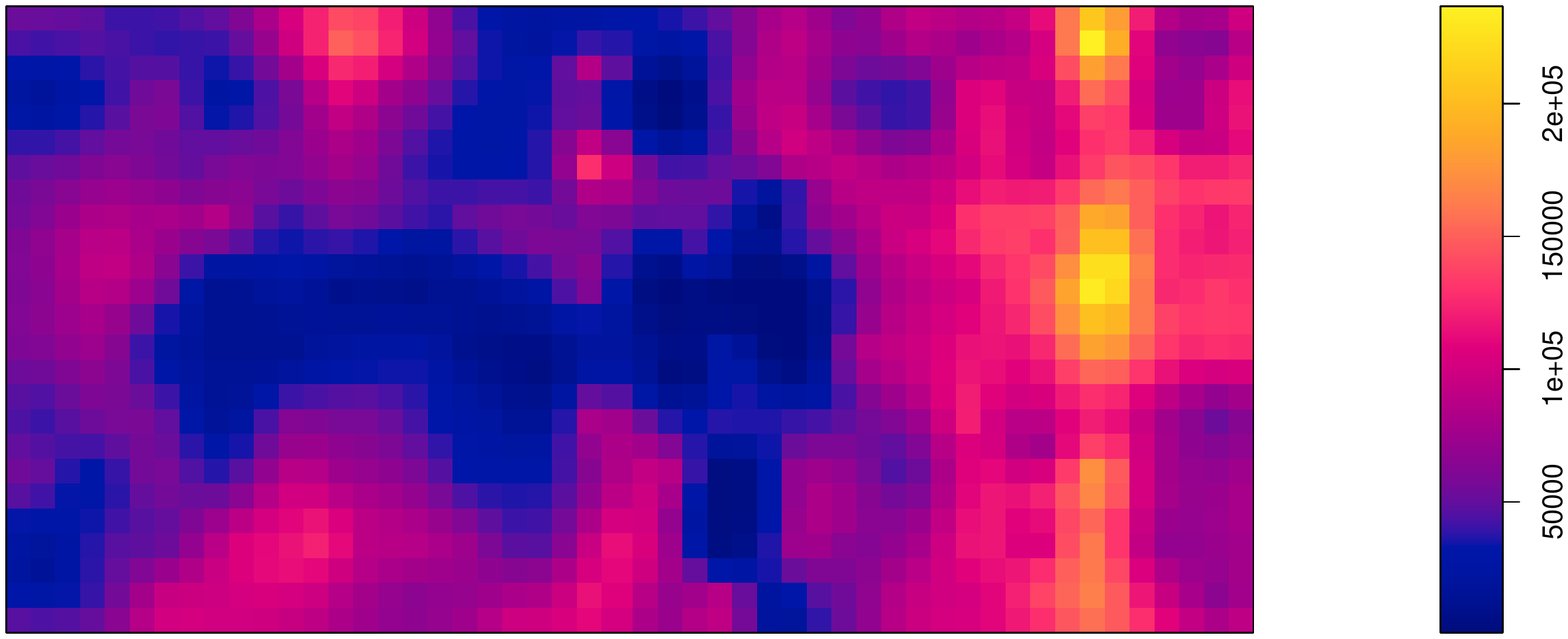} & \includegraphics[width=0.185\textwidth]{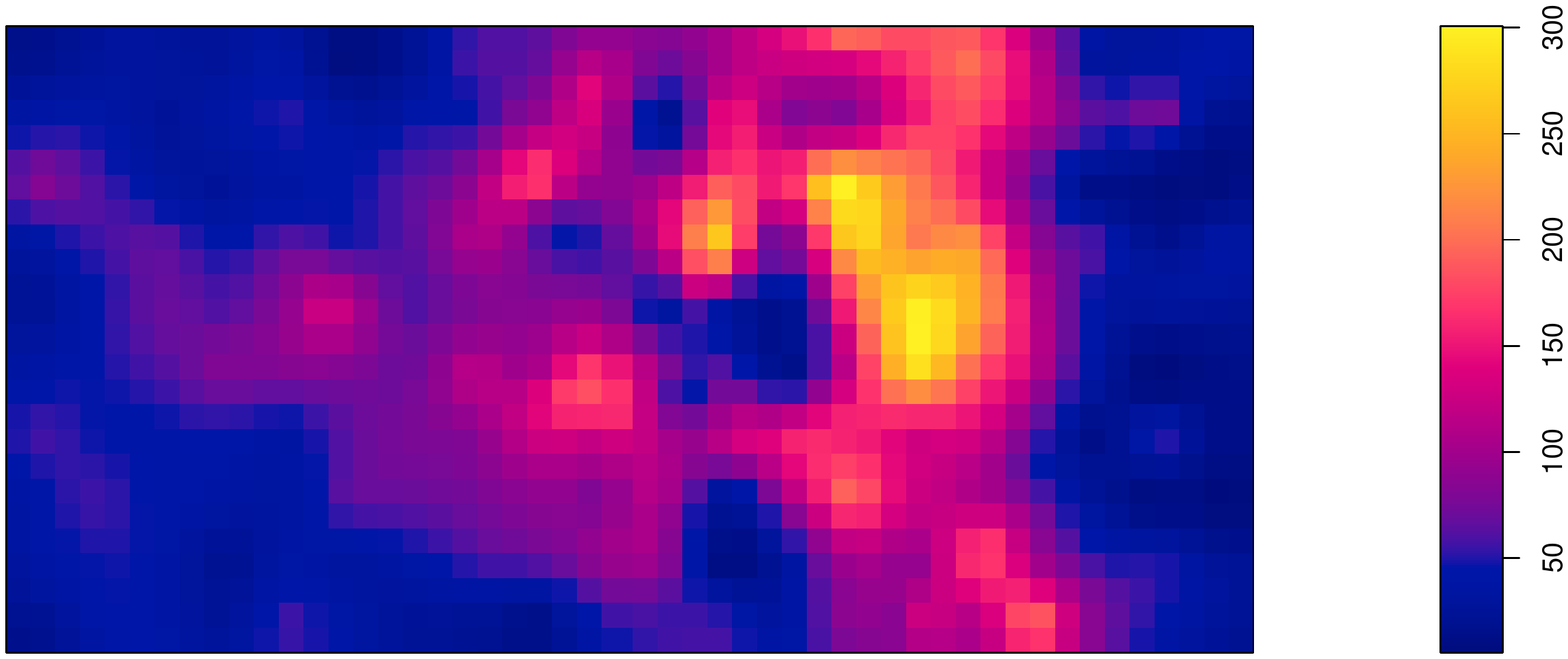} & \includegraphics[width=0.185\textwidth]{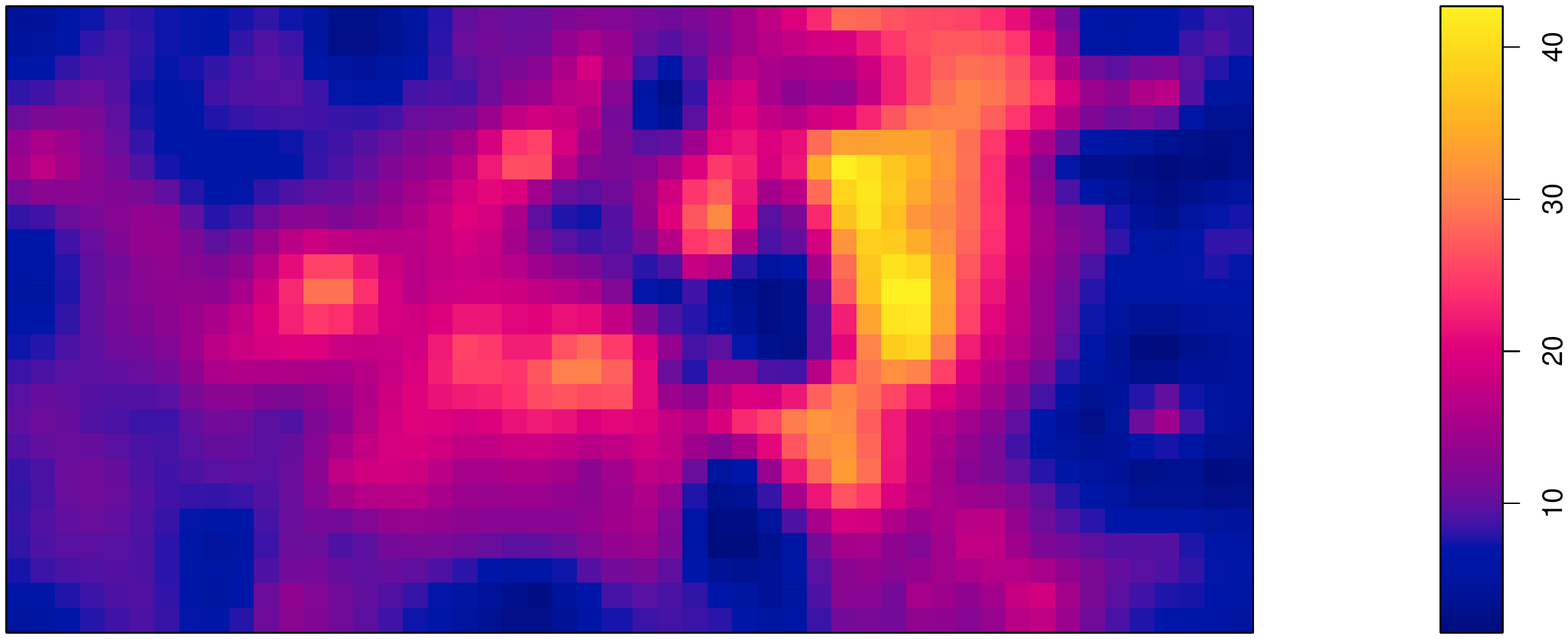} & \includegraphics[width=0.185\textwidth]{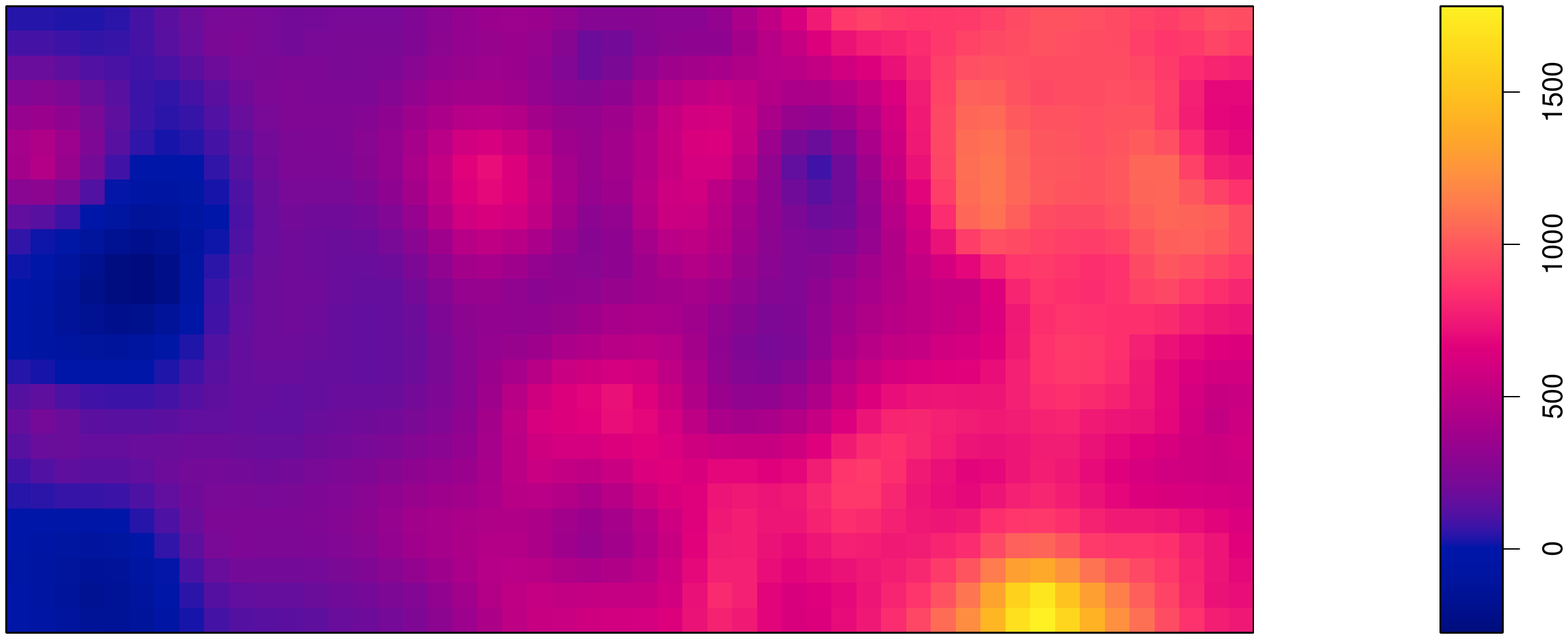}
\end{tabular}
\caption{Maps of locations of BPL trees and the fourteen (14) selected covariates using the PLCL-AENET method. From left to right: BPL trees, elevation, slope, Aluminum and pH (1st row), interaction between Aluminum and Copper, between Aluminum and Iron, between Aluminum and Phosphorus, between Aluminum and Zinc, and between Aluminum and Mineralized Nitrogen (2nd row), interaction between Copper and Magnesium, between Iron and Manganese, between Phosphorus and Nitrogen, Phosphorus and pH, and between Nitrogen and Mineralized Nitrogen (3rd row).}
\label{fig:app}
\end{center}
\end{figure}
%\end{comment}

\section{Discussion and conclusion}\label{sec:discussion}

In this paper, we develop regularization methods for inhomogeneous GPPs via PPL and PLCL approaches. We impose convex as well as non-convex penalties on the composite likelihoods in order to perform variable selection and parameter estimation simultaneously in a setting where the number of covariates diverges as the volume of the observation window increases. The implementation in \textsf{R} of our procedure is quite simple as it consists to implement penalized generalized linear models. This is done in \textsf{R} by combining \textsf{spatstat} package with \textsf{glmnet} and \textsf{ncvreg} packages. We propose a new composite information criterion, cERIC, which extends classical composite information criteria to account explicitly the effect of the tuning parameter. Simulations show that cERIC outperforms a competing criterion, cBIC for choosing the tuning parameter; and that PLCL outperforms PPL in terms of selection properties and predictive performance. These simulation results are obtained by using adaptive lasso and adaptive elastic net as regularization methods for both PPL and PLCL. Simulations are also carried out in order to compare the selection and prediction performances of the pseudo-likelihood function regularized with convex and non-convex penalties. It follows from these simulations that adaptive lasso, adaptive elastic net, SCAD and MC+ methods be used to penalize the pseudo-likelihood function of a spatial GPP  when the observed point pattern is quite clustered and the covariance matrix of the covariates has a complex structure. Otherwise, we recommend the convex penalties. From a theoretical point of view, we provide general conditions on the spatial GPP to ensure its existence and some more conditions on the penalty function to establish asymptotic properties of the regularized pseudo-likelihood estimator in terms of sparsity, consistency and asymptotic normality. These results are the object of two theorems in the present paper and also hold in an unregularized setting, that is the pseudo-likelihood estimator for a non-stationary exponential Gibbs model is consistent and asymptotically normal. Adaptive lasso, adaptive elastic net, SCAD and MC+ are among the only regularization methods we consider which satisfy the two theorems mentioned above. We apply our procedure by fitting a Geyer saturation model to the B. pendula data obtained from census programs on the Barro Corrolado Island via PLCL combined with adaptive lasso and adaptive elastic net penalties. Although both fitted models are parsimonious, PLCL under adaptive elastic net penalty exhibits better model fit.   

%\bibliographystyle{plainnat}
%\bibliography{refejs}

\appendix

%%%%%%%%%%%%%%%%%%%%%%%%%%%%%%%%%%%%%%%%%%%%%%%%%%%%%%%%%%%%%%%%%%%%%%
%%%%%%%%%%%%%%%%%%% Lemma CLT score
%%%%%%%%%%%%%%%%%%%%%%%%%%%%%%%%%%%%%%%%%%%%%%%%%%%%%%%%%%%%%%%%%%%%%%
%\fred{I changed the order of the Appendices in order to make them appear in the same order as the are cited in the text.}

%%%%%%%%%%%%%%%%%%%%%%%%%%%%%%%%%%%%%%%%%%%%%%%%%%%%%%%%%%%%
%%%%%%% Auxiliary Lemma
%%%%%%%%%%%%%%%%%%%%%%%%%%%%%%%%%%%%%%%%%%%%%%%%%%%%%%%%%%%%

%\section{Proof of Theorem~\ref{existence}} \label{Exist}

%\jf{in preparation}
%\begin{comment}
\section{Auxiliary Lemma}  \label{sec:auxLemma}

The following result is used in the proof of Theorems~\ref{THM:ROOT}-\ref{THM:SPARSITYCLT}. Throughout the proofs, the notation $\mathbf X_{{n}} = O_{\mathrm P} (x_n)$ or $\mathbf X_{{n}} = o_{\mathrm P} (x_n)$ for a random vector $\mathbf X_n$ and a sequence of real numbers $x_n$ means that $\|\mathbf X_n\|=O_{\mathrm P}(x_n)$ and $\|\mathbf X_n\|=o_{\mathrm P}(x_n)$. In the same way for a vector $\mathbf V_n$ or a squared matrix $\mathbf M_n$, the notation $\mathbf V_n=O(x_n)$ and  $\mathbf M_n=O(x_n)$ mean that $\|\mathbf V_n\|=O(x_n)$ and  $\|\mathbf M_n\|=O(x_n)$, where the matrix norm corresponds to the spectral norm.

\begin{lemma} \label{lemma1}
Under the conditions ($\mathcal C$.\ref{C:Dn})-($\mathcal C$.\ref{C:locsta}), the following result holds  as $n\to~\infty$
\begin{align}
\mathbf{LPL}^{(1)}_{n}( \bX;\btheta_0) =  O_{\mathrm P}(\sqrt{p_n |D_n|})\label{eq:normal}.
\end{align}
\end{lemma}

\begin{proof}
By~\citet[Lemma~3.1]{coeurjolly2013fast}, we have
\begin{align*}
\Var[ \mathbf{LPL}^{(1)}_{n}( \bX;\btheta_0)]= \mathbf{A}_n(\btheta_{0})+\mathbf{B}_n(\btheta_{0})
\end{align*}
where $\mathbf{A}_n(\btheta_{0})$ and $\mathbf{B}_n(\boldsymbol \theta_{0})$ are defined in (\ref{sec:not}). Now,
\begin{align*}
\| \mathbf{A}_n(\boldsymbol \theta_{0})\| \leq {\int_{D_n} \|\EE [\bS(u,\bX)\bS(u,\bX)^\top \lambda_{\btheta_{0}}(u,\bX)] \|\mathrm{d}u} \\
+ {\int_{D_n} \|\EE [\bZ(u)\bZ(u)^\top \lambda_{\btheta_{0}}(u,\bX)] \| \mathrm{d}u}.
\end{align*}
Conditions ($\mathcal C$.\ref{C:Theta})-($\mathcal C$.\ref{C:locsta}) imply that 
\[
 \mathbf{A}_n(\btheta_{0})=O(|D_n|) + O(p_n |D_n|) = O(p_n |D_n|).
\]
We rewrite $ \mathbf{B}_n(\btheta_{0}) =  \mathbf{I}_n(\boldsymbol \theta_{0}) +  \mathbf{J}_n(\boldsymbol \theta_{0})$ where 

\[
 \mathbf{I}_n(\boldsymbol \theta_{0}) \hspace{-.11cm} = \hspace{-.11cm} \EE \left[ {\int_{D_n} \hspace{-.1cm} \int_{D_n}  \hspace{-.3cm} \mathbf{t}(u,\bX)\mathbf{t}(v,\bX)^\top  \Big(  \lambda_{\boldsymbol{\theta}_{0}}(u,\bX) \lambda_{\boldsymbol{\theta}_{0}}(v,\bX) - \lambda_{\boldsymbol{\theta}_{0}}(\{u,v\},\bX)  \Big) \mathrm{d}v \mathrm{d}u} \right]
\] and 
\[
 \mathbf{J}_n(\boldsymbol \theta_{0})= \EE \left[ {\int_{D_n} \int_{D_n} \Delta_v \mathbf{t}(u,\bX) \Delta_u \mathbf{t}(v,\bX)^\top \lambda_{\boldsymbol{\theta}_{0}}(\{u,v\},\bX) \mathrm{d}v \mathrm{d}u} \right].
\]
The finite range property ensured by ($\mathcal C$.\ref{C:Theta}) gives us:
\begin{align*}
\lambda_{\boldsymbol{\theta}_0}(v,\bX)&=\lambda_{\boldsymbol{\theta}_0}(v,\bX \cap B(v,R)), \\
\lambda_{\boldsymbol{\theta}_0}(v,\bX \cup u)&=\lambda_{\boldsymbol{\theta}_0}(v,[\bX \cap B(v,R)] \cup [\{u\} \cap B(v,R)]) \quad  \\
\mathbf{t}(v,\bX \cup u)&=\mathbf{t}(v,[\bX \cap B(v,R)] \cup [\{u\} \cap B(v,R)]).
\end{align*}
For any $v \in D_n \setminus B(u,R)$
\[
\lambda_{\boldsymbol{\theta}_0}(v,\bX \cup u)=\lambda_{\boldsymbol{\theta}_0}(v,\bX)
\quad \mbox{and} \quad
\mathbf{t}(v,\bX \cup u) = \mathbf{t}(v,\bX),\mbox{ that is}
\]

\[
\lambda_{\btheta_0}(u,\bX) \lambda_{\btheta_0}(v,\bX) = \lambda_{\btheta_0}(\{u,v\},\bX) \quad \mbox{and}  \quad \Delta_u \mathbf{t}(v,\bX) = 0.
\]
Hence 
\[
\hspace{-.15cm}   \mathbf{I}_n(\boldsymbol \theta_{0}) \hspace{-.09cm}=\hspace{-.09cm} \EE  \left[ \hspace{-.11cm} {\int_{D_n} \hspace{-.11cm} \int_{D_n \cap B(u,R)} \hspace{-.98cm}  \mathbf{t}(u,\bX)\mathbf{t}(v,\bX)^\top  \hspace{-.11cm}  \Big(  \lambda_{\boldsymbol{\theta}_{0}}(u,\bX) \lambda_{\boldsymbol{\theta}_{0}}(v,\bX)  \hspace{-.11cm} - \hspace{-.11cm} \lambda_{\boldsymbol{\theta}_{0}}(\{u,v\},\bX) \Big)  \mathrm{d}v \mathrm{d}u} \right] 
\]  and 

\[
 \mathbf{J}_n(\boldsymbol \theta_{0})= \EE \left[ {\int_{D_n} \int_{D_n \cap B(u,R)} \Delta_v \mathbf{t}(u,\bX) \Delta_u \mathbf{t}(v,\bX)^\top \lambda_{\boldsymbol{\theta}_{0}}(\{u,v\},\bX) \mathrm{d}v \mathrm{d}u} \right].
\]
This implies from conditions ($\mathcal C$.\ref{C:Theta})-($\mathcal C$.\ref{C:locsta}) that 
\[
 \mathbf{I}_n(\boldsymbol \theta_{0})=O(p_n |D_n|) \quad \mbox{and} \quad
 \mathbf{J}_n(\boldsymbol \theta_{0})=O(p_n |D_n|).
\]
Thus $ \mathbf{B}_n(\boldsymbol \theta_{0})=O(p_n |D_n|)$ and the result is proved since for any centered real-valued stochastic process $Y_n$ with finite variance, $Y_n=O_{\mathrm P}(\sqrt{\Var[Y_n]})$.
\end{proof}
%\end{comment}

%%%%%%%%%%%%%%%%%%%%%%%%%%%%%%%%%%%%%%%%%%%%%%%%%%%%%%%%%%%%%%%%%%%%%%
%%%%%%%%%%%%%%%%%%% Thm 1
%%%%%%%%%%%%%%%%%%%%%%%%%%%%%%%%%%%%%%%%%%%%%%%%%%%%%%%%%%%%%%%%%%%%%%
%\begin{comment}
\section{Proof of Theorem~{\ref{THM:ROOT}}} \label{proof1}

In the proof of this result and the following ones, the notation $\kappa$ stands for a generic constant which may vary from line to line. In particular this constant is independent of $n$, $\boldsymbol \theta_0$ and $\mathbf k$.

\begin{proof}
Let $ d_n = \sqrt{p_n} (|D_n|^{-1/2}+a_n)$, and $\mathbf{k}=\{k_1, k_2, \ldots, k_{p_n}\}^\top  $. We remind the reader that the estimate of $\boldsymbol\theta_0$ is defined as the maximum of the function $Q_n$ (given by~(\ref{qn})) over $\Theta$, an open convex bounded set of $\mathbb R^{p_n}$. For any $\mathbf k$ such that $\|\mathbf k\|\leq K<\infty$, $\boldsymbol \theta_0 + d_n \mathbf k \in \Theta$ for $n$  sufficiently large. Assume this is valid in the following.
To prove Theorem~\ref{THM:ROOT}, we follow the main argument by~\cite{fan2001variable} and aim at proving that for any given $\epsilon>0$, there exists $K>0$ such that for $n$ sufficiently large
\begin{equation}
\label{eq:15}
\mathrm{P}\bigg(\sup_{\|\mathbf{k}\| = K} \Delta_n(\mathbf k)>0\bigg)\leq \epsilon,
\quad \mbox{ where } \Delta_n(\mathbf k) = Q_n(\bX ;\boldsymbol \theta_0+d_n\mathbf{k})-Q_n(\bX;\boldsymbol \theta_0).
\end{equation}
Equation~\eqref{eq:15} will imply that with probability at least $1-\epsilon$, there exists a local maximum in the ball $\{\boldsymbol \theta_0+d_n\mathbf{k}:\|\mathbf{k}\| \leq K\}$, and therefore  a local maximizer $\boldsymbol{\hat{\theta}}$ such that $\|{ \boldsymbol {\hat \theta}-\boldsymbol \theta_0}\|=O_\mathrm{P}(d_n)$.  We decompose $\Delta_n(\mathbf k)$ as $\Delta_n(\mathbf k)= T_1+T_2$ where
\begin{align*}
	T_1 = & \mbox{LPL}_n(\bX;\boldsymbol \theta_0+d_n\mathbf{k})-\mbox{LPL}_n(\bX; \boldsymbol \theta_0) \\
	T_2 = & |D_n|{\sum_{j=l+1}^{p_n} \big( p_{\lambda_{n,j}}(|\theta_{0j}|)}- p_{\lambda_{n,j}}(|\theta_{0j}+d_nk_j|)\big).
\end{align*}
Since $\boldsymbol{\theta}  \mapsto \lambda_{\boldsymbol{\theta}}(u, \bx)$ is infinitely continuously differentiable, then using a second-order Taylor expansion there exists $t\in (0,1)$ such that

\begin{align*}
	T_1 =& \, d_n \mathbf k^\top \mathbf{LPL}_n^{(1)}(\bX;\boldsymbol \theta_0) + \frac12d_n^2\mathbf k^\top \mathbf{LPL}_n^{(2)} (\bX;\boldsymbol \theta_0 + td_n \mathbf k) \mathbf k .	
\end{align*}
%Since $\mathbf{LPL}_n^{(2)} (\bX;\boldsymbol \theta_0)=-\mathbf{A}_n(\bX;\boldsymbol{\theta}_{0})$ we can rewrite $T_1$ as 
%\begin{align*}
%	T_1 =& \, d_n \mathbf k^\top \mathbf{LPL}_n^{(1)}(\bX;\boldsymbol \theta_0) -\frac12d_n^2\mathbf k^\top \mathbf{A}_n(\bX;\boldsymbol{\theta}_{0})\mathbf k \\
%  &+ \frac12d_n^2\mathbf k^\top \left( \mathbf{A}_n(\bX;\boldsymbol \theta_0) -\mathbf{A}_n(\bX;\boldsymbol \theta_0 + td_n \mathbf k) \right) \mathbf k .	
%\end{align*}
Since $\mathbf{LPL}_n^{(2)} (\bX;\boldsymbol \theta_0+ td_n \mathbf k)=-\mathbf{A}_n(\bX;\boldsymbol{\theta}_{0} + td_n \mathbf k)$, we can rewrite $T_1$ as 

\begin{align*}
	T_1 =& \, d_n \mathbf k^\top \mathbf{LPL}_n^{(1)}(\bX;\boldsymbol \theta_0) -\frac12d_n^2\mathbf k^\top \mathbf{A}_n(\bX;\boldsymbol{\theta}_{0} + td_n \mathbf k)\mathbf k. 	
\end{align*}
%By conditions~($\mathcal C$.\ref{C:Theta})-($\mathcal C$.\ref{C:locsta}), there exists a nonnegative constant $\kappa$ such that
%\[
%	\frac12\|\mathbf{A}_n(\bX;\boldsymbol \theta_0) -\mathbf{A}_n(\bX;\boldsymbol \theta_0 + td_n \mathbf k) \| \leq  \kappa d_n |D_n| p_n.
%\]
%Now, denote $\check \nu := \liminf_{n\to \infty} \nu_{\min}(|D_n|^{-1}\mathbf{A}_n(\bX;\boldsymbol \theta_0))$. By condition ($\mathcal C$.\ref{C:An}), we have  that for any $\mathbf k$ almost surely
%\[
%0<\check \nu \leq  \frac{\mathbf k^\top \left( |D_n|^{-1}\mathbf A_n(\bX;\boldsymbol\theta_0)\right) \mathbf k}{\|\mathbf k\|^2}. 
%\] 
Now, denote $\check \nu := \liminf_{n\to \infty} \nu_{\min}(|D_n|^{-1}\mathbf{A}_n(\bX;\boldsymbol \theta_0+td_n \mathbf k))$. Let $\mathscr{N}(\btheta_0)$ be a neighborhood of $\btheta_0$. For n sufficiently large, $\btheta_0 + td_n \mathbf k \in \mathscr{N}(\btheta_0)$. Thus, by condition ($\mathcal C$.\ref{C:An}) we have that for any $\mathbf k$ almost surely
\[
0<\check \nu \leq  \frac{\mathbf k^\top \left( |D_n|^{-1}\mathbf A_n(\bX;\boldsymbol\theta_0+td_n \mathbf k)\right) \mathbf k}{\|\mathbf k\|^2}. 
\]
%Therefore, we have
%\[
%	T_1 \leq d_n \| \mathbf{LPL}_n^{(1)}(\bX;\boldsymbol \theta_0)\| \, \| \mathbf k \|  - \frac{\check \nu}2 d_n^2 |D_n| \|\mathbf k\|^2 + \kappa p_n d_n^3 |D_n| \| \mathbf k \|^2.
%\]
%Now by the condition ($\mathcal C$.\ref{C:pnDn}) and by the assumption that $a_n=O(|D_n|^{-1/2})$, we obtain $p_nd_n=o(1)$, so  $$\kappa p_n d_n^3 |D_n| \| \mathbf k \|^2=o(1) d_n^2 |D_n| \| \mathbf k \|^2.$$
%Hence, for $n$ sufficiently large
%\[
%	T_1 \leq d_n \| \mathbf{LPL}_n^{(1)}(\bX;\boldsymbol \theta_0)\| \, \| \mathbf k \|  - \frac{\check \nu}4 d_n^2 |D_n| \|\mathbf k\|^2. 
%\]
Hence, for $n$ sufficiently large
\[
	T_1 \leq d_n \| \mathbf{LPL}_n^{(1)}(\bX;\boldsymbol \theta_0)\| \, \| \mathbf k \|  - \frac{\check \nu}2 d_n^2 |D_n| \|\mathbf k\|^2. 
\]
We can rewrite $T_2$ as 
\[
T_2=|D_n|{\sum_{j=1}^{q_n} \big( p_{\lambda_{n,j+l}}(|\beta_{0j}|)}- p_{\lambda_{n,j+l}}(|\beta_{0j}+d_nk_{j+l}|)\big)
\] where $q_n \approx p_n$ for $n$ sufficiently large. We have

\[
T_2\leq T_2^\prime := |D_n|{\sum_{j=1}^s \big( p_{\lambda_{n,j+l}}(|\beta_{0j}|)}- p_{\lambda_{n,j+l}}(|\beta_{0j}+d_nk_{j+l}|)\big) 	
\]
since for any $j$ the penalty function $p_{\lambda_{n,j}}$ is nonnegative and $p_{\lambda_{n,j}}(|\beta_{0j}|)=0$ for $j=s+1,\dots,q_n$. \\
From ($\mathcal C$.\ref{C:plambda}), for $n$ sufficiently large, $p_{\lambda_{n,j}}$ is twice continuously differentiable for every $\beta_j = \beta_{0j}+t d_n k_{j+l}$ with $t\in (0,1)$. Therefore using a third-order Taylor expansion, there exist $t_j \in (0,1)$, $j=1,\dots,s$ such that $-T_2^\prime=T_{2,1}^\prime+T_{2,2}^\prime+T_{2,3}^\prime$, where
\begin{align*}
T_{2,1}^\prime &=d_n|D_n|\sum_{j=1}^s k_{j+l} p_{\lambda_{n,j+l}}^\prime(|\beta_{0j}|) \sign(\beta_{0,j}) \leq \sqrt s a_n d_n |D_n| \|\mathbf k \| \leq d_n^2 |D_n| \|\mathbf k \|, \\
T_{2,2}^\prime&=\frac12 d_n^2 |D_n|\sum_{j=1}^s k_{j+l}^2 p^{\prime\prime}_{\lambda_{n,j+l}} (|\beta_{0j}|) \leq c_n d_n^2|D_n| \|\mathbf k\|^2, \\
T_{2,3}^\prime&= \frac16 d_n^3|D_n| \sum_{j=1}^s k_{j+l}^3 p^{\prime\prime\prime}_{\lambda_{n,j+l}}  (|\beta_{0j}+t_j d_n k_{j+l}|) \leq \kappa d_n^3 |D_n|.
\end{align*}
The three inequalities above are obtained using the definitions of $a_n$ and $c_n$, condition~($\mathcal C$.\ref{C:plambda}) and Cauchy-Schwarz inequality. We deduce that for $n$ sufficiently large
%\[
%T_2\leq |T_2^\prime| \leq 2 d_n^2|D_n| \|\mathbf k\|
%\] and then 
%\[
%\Delta_n(\mathbf k) \leq d_n \| \mathbf{LPL}_n^{(1)}(\bX;\boldsymbol \theta_0)\| \, \| \mathbf k \| -\frac{\check \nu}2 d_n^2 |D_n| \|\mathbf k\|^2 + 2 d_n^2 |D_n| \|\mathbf k\|.
%\]
\[
T_2\leq |T_2^\prime| \leq 2 d_n^2|D_n| \|\mathbf k\|
\] and then 
\[
\Delta_n(\mathbf k) \leq d_n \| \mathbf{LPL}_n^{(1)}(\bX;\boldsymbol \theta_0)\| \, \| \mathbf k \| -\frac{\check \nu}4 d_n^2 |D_n| \|\mathbf k\|^2 + 2 d_n^2 |D_n| \|\mathbf k\|.
\]
%We now return to (\ref{eq:15}): for $n$ sufficiently large
%\[
%\mathrm{P}\bigg({\sup_{\|\mathbf{k}\|= K}  \Delta_n(\mathbf{k})>0}\bigg) \leq 	\mathrm P \bigg(
%\| \mathbf{LPL}_n^{(1)}(\bX;\boldsymbol\theta_0)\| > \frac{\check \nu}4 d_n|D_n| K - 2 d_n |D_n|
% \bigg).
%\]
We now return to (\ref{eq:15}): for $n$ sufficiently large
\[
\mathrm{P}\bigg({\sup_{\|\mathbf{k}\|= K}  \Delta_n(\mathbf{k})>0}\bigg) \leq 	\mathrm P \bigg(
\| \mathbf{LPL}_n^{(1)}(\bX;\boldsymbol\theta_0)\| > \frac{\check \nu}2 d_n|D_n| K - 2 d_n |D_n|
 \bigg).
\]
Since $d_n |D_n|=O(\sqrt{p_n|D_n|})$, by choosing $K$ large enough, there exists $\kappa$ such that for $n$  sufficiently large
\[
	\mathrm P \bigg( \sup_{\|\mathbf k\|=K}  \Delta_n(\mathbf k) >0\bigg) \leq \mathrm P \bigg( \|\mathbf{LPL}_n^{(1)}(\bX;\boldsymbol \theta_0)\| >\kappa \sqrt{p_n|D_n|}\bigg) \leq \epsilon
\]
for any given $\epsilon>0$ from \eqref{eq:normal}. 
\end{proof}
%\end{comment}

%%%%%%%%%%%%%%%%%%%%%%%%%%%%%%%%%%%%%%%%%%%%%%%%%%%%%%%%%%%%%%%%%%%%%%
%%%%%%%%%%%%%%%%%%% Thm 2
%%%%%%%%%%%%%%%%%%%%%%%%%%%%%%%%%%%%%%%%%%%%%%%%%%%%%%%%%%%%%%%%%%%%%%

%\begin{comment}
\section{Auxiliary lemmas for the proof of Theorem~\ref{THM:SPARSITYCLT}} \label{proof2}
\begin{lemma}
\label{lemma2}
Assume the conditions ($\mathcal C$.\ref{C:Dn})-($\mathcal C$.\ref{C:BnCn}) and condition ($\mathcal C$.\ref{C:plambda}) hold. If $a_n=O(|D_n|^{-1/2})$ and $b_n \sqrt{|D_n|/p_n^2}\to \infty$ as $n\to\infty$, then with probability tending to $1$, for any {$\boldsymbol \theta_1$} satisfying $\|{\boldsymbol{\theta_1} - \boldsymbol{\theta_{01}}}\|=O_\mathrm{P}(\sqrt{p_n/|D_n|})$, and for any constant $K_1 > 0$,
\begin{align*}
Q_n\Big(\bX;({\boldsymbol \theta_1}^\top,\mathbf{0}^\top)^\top \Big)
= \max_{\| \boldsymbol \theta_2\| \leq K_1 \sqrt{p_n/|D_n|}}
Q_n\Big(\bX;({\boldsymbol \theta_1}^\top,{\boldsymbol \theta_2}^\top)^\top \Big).
\end{align*}
\end{lemma}
\begin{proof}
It is sufficient to show that with probability tending to $1$ as ${n\to \infty}$, for any ${\boldsymbol \theta_1}$ satisfying $\|{\boldsymbol \theta_1 -\boldsymbol \theta_{01}}\|=O_\mathrm{P}(\sqrt{p_n/|D_n|})$, for some $\varepsilon_n=K_1\sqrt{p_n/|D_n|}$, and for $j=l+s+1, \ldots, p_n$,

\begin{equation}
\label{eq:lem1}
\frac {\partial Q_n(\bX;\bf \boldsymbol \theta)}{\partial\theta_j}<0 \quad
\mbox { for } 0<\theta_j<\varepsilon_n, \mbox{ and}
\end{equation}

\begin{equation}
\label{eq:lem1b}
\frac {\partial Q_n(\bX;\bf \boldsymbol \theta)}{\partial\theta_j}>0 \quad
\mbox { for } -\varepsilon_n<\theta_j<0.
\end{equation}
Let $j \in \{l+s+1,\cdots,p_n\}$. From (\ref{ch2:Pois}) we have
\[
\frac {\partial \mbox{LPL}_n(\bX;\boldsymbol \theta)}{\partial\theta_j}=\frac {\partial \mbox{LPL}_n{(\bX;\boldsymbol \theta_0)}}{\partial\theta_j}+ R_n
\] 
where $R_n=\int_{D_n} \mathbf{t}_j(u,\bX) \left(\lambda_{\boldsymbol{\theta}_{0}}(u,\bX)- \lambda_{\boldsymbol\theta}(u,\bX)\right)\mathrm{d}u$.
Using similar arguments used in the proof of Lemma~\ref{lemma1}, we can prove that
\[
\frac {\partial \mbox{LPL}_n{(\bX;\boldsymbol \theta_0)}}{\partial\theta_j}=O_\mathrm{P}(\sqrt{|D_n|}).
\]
Let $u \in \mathbb{R}^d$. Using first-order Taylor expansion, there exists $t \in (0,1)$ such that
\[
\lambda_{\boldsymbol{\theta}}(u,\bX)=\lambda_{\boldsymbol{\theta}_{0}}(u,\bX)+\left(\boldsymbol{\theta}-\boldsymbol{\theta}_{0}\right)^\top \mathbf{t}(u,\bX) \lambda_{\tilde{\boldsymbol{\theta}}}(u,\bX)
\]
where $\tilde{\boldsymbol{\theta}}=\boldsymbol{\theta}_{0}+t(\boldsymbol{\theta}-\boldsymbol{\theta}_{0})$. \\
For $n$ sufficiently large, we have by Cauchy-Schwarz inequality and conditions  ($\mathcal C$.\ref{C:Theta})-($\mathcal C$.\ref{C:locsta})
\[
|R_n| \leq \kappa \hspace{-.15cm} \int_{D_n} \hspace{-.3cm} \| \boldsymbol{\theta}_0-\boldsymbol{\theta} \| \| \mathbf{t}(u,\bX) \| \mathrm{d}u= \hspace{-.15cm} \int_{D_n} \hspace{-.3cm} O_\mathrm{P}(\sqrt{p_n/|D_n|}) O_\mathrm{P}({\sqrt{p_n}}) \mathrm{d}u=O_\mathrm{P}(\sqrt{|D_n| p_n^2}).
\]
We then deduce that for any $j=l+s+1, \ldots, p_n$
\[
\frac {\partial \mbox{LPL}_n(\bX;\boldsymbol \theta)}{\partial\theta_j}=O_\mathrm{P}(\sqrt{|D_n| p_n^2}).
\]
Now, we want to prove (\ref{eq:lem1}). Let $0<\theta_j<\varepsilon_n$ and $b_n$ the sequence given by~(\ref{eq:bn}). By condition ($\mathcal C$.\ref{C:plambda}), $b_n$ is well-defined and since by assumption $b_n\sqrt{|D_n|/p_n^2}\to~\infty$, in particular,  $b_n>0$ for $n$ sufficiently large. Therefore, for $n$ sufficiently large,
\begin{align*}
\mathrm{P} \left ( \frac {\partial Q_n(\bX;\boldsymbol \theta)}{\partial\theta_j}<0 \right)&=\mathrm{P} \left ( \frac {\partial \mathrm{LPL}_n(\bX;\boldsymbol \theta)}{\partial\theta_j} - |D_n|p'_{\lambda_{n,j}}(|\theta_j|)\sign(\theta_j)<0 \right)\\
&=\mathrm{P} \left ( \frac {\partial \mathrm{LPL}_n(\bX;\boldsymbol \theta)}{\partial\theta_j}< |D_n|p'_{\lambda_{n,j}}(|\theta_j|) \right)\\
& \geq \mathrm{P} \left ( \frac {\partial \mathrm{LPL}_n(\bX;\boldsymbol \theta)}{\partial\theta_j}< |D_n|b_n \right)\\
&= \mathrm{P} \left ( \frac {\partial \mathrm{LPL}_n(\bX;\boldsymbol \theta)}{\partial\theta_j}< \sqrt{|D_n| p_n^2}\sqrt{\frac{|D_n|}{p_n^2}}b_n \right).
\end{align*}
$\mathrm{P} \left ( {\partial Q_n(\bX;\boldsymbol \theta)}/{\partial\theta_j}<0 \right) \xrightarrow{}1 \mbox{ as } n \to \infty$
since $ {\partial \mathrm{LPL}_n(\bX;\boldsymbol \theta)}/{\partial\theta_j}=O_\mathrm{P}(\sqrt{|D_n|p_n^2})$ and $b_n\sqrt{|D_n|/p_n^2} \xrightarrow{} \infty$. This proves (\ref{eq:lem1}). We proceed similarly to prove (\ref{eq:lem1b}).
\end{proof}

The next lemma provides a central limit theorem (CLT) for $\mathbf{LPL}^{(1)}_{n,1}( \bX;\boldsymbol{\theta}_{0})$, the vector of the first $m=l+s$ components of $\mathbf{LPL}^{(1)}_{n}( \bX;\boldsymbol{\theta}_{0})$. Its proof is based on a general CLT for nonstationary conditionally centered random fields proved by \cite{coeurjolly2017parametric}.
\begin{lemma}
\label{lemma3}
Under the conditions ($\mathcal C$.\ref{C:Dn})-($\mathcal C$.\ref{C:BnCn}) and $m=l+s$, the following convergence holds in distribution as $n\to\infty$
\begin{align}
\ \{\mathbf{A}_{n,11}(\bX;\boldsymbol{\theta}_{0})+\mathbf{B}_{n,11}(\bX;\boldsymbol{\theta}_{0})\}^{-1/2}\mathbf{LPL}^{(1)}_{n,1}( \bX;\boldsymbol{\theta}_{0}) \xrightarrow{d} \mathcal{N}(\mathbf{0},\mathbf{I}_{m}) \label{eq:clt}
\end{align}
\end{lemma}

\begin{proof}
Denoting by $\Delta_j$ the unit cube centered at $j \in \mathbb{Z}^d$, we let \\ $\Delta_{n,j}=\Delta_j \cap D_n $ and $\boldsymbol{\mathcal{I}}_n \subset \mathbb{Z}^d$ the set such that $D_n=\cup_{j \in \boldsymbol{\mathcal{I}}_n}\Delta_{n,j}$. We define
\[
\mathbf{LPL}^{(1)}_{n,1}( \bX;\boldsymbol{\theta}_0)=\sum_{j \in \mathcal{I}_n} \bY_{n,j}
\]
where 
\[
\bY_{n,j}=\sum_{u \in \bX_{\Delta_{n,j}}} \mathbf{t}(u,\bX \setminus u)-\int_{\Delta_{n,j}} \mathbf{t}(u,\bX) \lambda_{\boldsymbol{\theta}_0}(u,\bX) \mathrm{d}u.
\]
We also define 
\[
\hat{\boldsymbol{\Sigma}}_n= \sum_{j \in \boldsymbol{\mathcal{I}}_n} \sum_{\substack{
   k \in \boldsymbol{\mathcal{I}}_n \\
   | k-j | \leq R
  }} \bY_{n,j} \bY_{n,k}^\top \quad \mbox{and} \quad \boldsymbol{\Sigma}_n=\EE \,\hat{\boldsymbol{\Sigma}}_n.
\] 
To prove Lemma~\ref{lemma3}, we apply Theorem A.1 of \cite{coeurjolly2017parametric}. We are led to verify the following assumptions:
\begin{enumerate}
\item[(a)]  $\EE \bY_{n,j} = 0$ and there exists $q\geq 1$ such that $\sup_{n \geq 1} \sup_{j \in \boldsymbol{\mathcal{I}_n}} \EE ||\bY_{n,j}||^{4q}<~\infty$, \\
\item[(b)] for any sequence $\boldsymbol{\mathcal{J}}_n \subset \boldsymbol{\mathcal{I}}_n$   such that $|\boldsymbol{\mathcal{J}}_n |$ $\to \infty$ as $n \to \infty,$ \[
|\boldsymbol{\mathcal{J}}_n|^{-1} \sum_{j,k \in \boldsymbol{\mathcal{J}}_n}  \| \EE (\bY_{n,j}\bY_{n,k}^\top) \|=O(1),
\] \\
\item[(c)] there exists a positive definite matrix $\boldsymbol{Q}$ such that  $|\boldsymbol{\mathcal{I}}_n|^{-1} \boldsymbol{\Sigma}_n \geq \boldsymbol{Q}$ for all sufficiently large $n$, \\
\item[(d)] as $n \to \infty$
\[
|\boldsymbol{\mathcal{I}}_n|^{-1/2} \sum_{j \in \boldsymbol{\mathcal{I}_n}} \EE || \EE (\bY_{n,j}| \bX_{n,k}, k\neq j)|| \to 0. 
\]
\end{enumerate}

\begin{itemize}
\item Condition (a): By GNZ formula~(\ref{gnz}), $\bY_{n,j}$ has zero mean for any $n\geq 1$ and $j \in \boldsymbol{\mathcal{I}_n}$. Now,
\[
\EE ||\bY_{n,j}||^{4q}=\EE \left \{ \left \|  \sum_{u \in \bX_{\Delta_{n,j}}} \mathbf{t}(u,\bX \setminus u)-\int_{\Delta_{n,j}} \mathbf{t}(u,\bX) \lambda_{\boldsymbol{\theta}_0}(u,\bX) \mathrm{d}u  \right \|^{4q} \right \}.
\]
To show the remaining statement in $(a)$, it is sufficient to show that 
\[
\EE \left \{ \left |  \sum_{u \in \bX_{\Delta_{n,j}}} \mathbf{t}_i(u,\bX \setminus u)-\int_{\Delta_{n,j}} \mathbf{t}_i(u,\bX) \lambda_{\boldsymbol{\theta}_0}(u,\bX) \mathrm{d}u  \right |^{4q} \right \} <\infty 
\]
for any $i=1,\cdots, m$. Taking $q=1$, we have
\begin{multline}
\label{binom}
\EE \left \{ \left |  \sum_{u \in \bX_{\Delta_{n,j}}} \mathbf{t}_i(u,\bX \setminus u)-\int_{\Delta_{n,j}} \mathbf{t}_i(u,\bX) \lambda_{\boldsymbol{\theta}_0}(u,\bX) \mathrm{d}u  \right |^{4} \right \} \\
 = \EE \Bigg \{ \left ( \sum_{u \in \bX_{\Delta_{n,j}}} \mathbf{t}_i(u,\bX \setminus u)\right )^{4} + \left ( \int_{\Delta_{n,j}} \mathbf{t}_i(u,\bX) \lambda_{\boldsymbol{\theta}_0}(u,\bX)  \mathrm{d}u  \right )^{4} \\
 - 4 \left ( \sum_{u \in \bX_{\Delta_{n,j}}} \mathbf{t}_i(u,\bX \setminus u)\right )^{3}  \int_{\Delta_{n,j}} \mathbf{t}_i(u,\bX) \lambda_{\boldsymbol{\theta}_0}(u,\bX)  \mathrm{d}u \\
 + 6 \left ( \sum_{u \in \bX_{\Delta_{n,j}}} \mathbf{t}_i(u,\bX \setminus u)\right )^{2} \left(  \int_{\Delta_{n,j}} \mathbf{t}_i(u,\bX) \lambda_{\boldsymbol{\theta}_0}(u,\bX)  \mathrm{d}u \right)^2   \\
 - 4 \sum_{u \in \bX_{\Delta_{n,j}}} \mathbf{t}_i(u,\bX \setminus u)  \left(  \int_{\Delta_{n,j}} \mathbf{t}_i(u,\bX) \lambda_{\boldsymbol{\theta}_0}(u,\bX)  \mathrm{d}u \right)^3 \Bigg \}.
\end{multline}

Note that
\begin{align*}
 \left ( \sum_{u} \mathbf{t}_i(u,\bX \setminus u)\right )^{4} =& \left( \left ( \sum_{u} \mathbf{t}_i(u,\bX \setminus u)\right )^{2} \right)^{2}  \\
 =& \left( \sum_{u } \mathbf{t}_i^2(u,\bX \setminus u) + \sum_{u,v}^{\neq}  \mathbf{t}_i(u,\bX \setminus u) \mathbf{t}_i(v,\bX \setminus v) \right)^{2} \\
 =& \sum_{u } \mathbf{t}_i^4(u,\bX \setminus u) + \sum_{u,v}^{\neq} \mathbf{t}_i^2(u,\bX \setminus u) \mathbf{t}_i^2(v,\bX \setminus v) \; + \\
&  \hspace{-1cm} \sum_{u,v,w,y}^{u\neq w, v \neq y} \mathbf{t}_i(u,\bX \setminus u) \mathbf{t}_i(v,\bX \setminus v) \mathbf{t}_i(w,\bX \setminus w) \mathbf{t}_i(y,\bX \setminus y) \\
+ & 2 \sum_{u,v,w}^{v \neq w} \mathbf{t}_i^2(u,\bX \setminus u) \mathbf{t}_i(v,\bX \setminus v) \mathbf{t}_i(w,\bX \setminus w). 
\end{align*}
Now, using GNZ formula we have
\begin{align*}
\EE \left [ \left ( \sum_{u \in \bX_{\Delta_{n,j}} } \mathbf{t}_i(u,\bX \setminus u)\right )^{4} \right ] =& \int_{\Delta_{n,j}}  \EE \left [ \mathbf{t}_i^4(u,\bX) \lambda_{\boldsymbol{\theta}_0}(u,\bX) \right ]  
\mathrm{d}u \; + \\
& \hspace{-1.5cm} \int_{\Delta_{n,j}^2}  \EE \left [  \mathbf{t}_i^2(u,\bX) \mathbf{t}_i^2(v,\bX) \lambda_{\boldsymbol{\theta}_0}(\{u,v\}, \bX) \right ] \mathrm{d}u \mathrm{d}v \; +\\
&  \hspace{-4.8cm} \int_{\Delta_{n,j}^4} \hspace{-.4cm} \EE \left [  \mathbf{t}_i(u,\bX) \mathbf{t}_i(v,\bX) \mathbf{t}_i(w,\bX)  \mathbf{t}_i(y,\bX) \lambda_{\boldsymbol{\theta}_0}(\{u,w\}, \bX)  \lambda_{\boldsymbol{\theta}_0}(\{v,y\},\bX)  \right ] \hspace{-.15cm}  \mathrm{d}u \mathrm{d}v \mathrm{d}w \mathrm{d}y\\
& \hspace{-3.8cm} + 2 \int_{\Delta_{n,j}^3} \hspace{-.15cm} \EE \left [  \mathbf{t}_i^2(u,\bX) \mathbf{t}_i(v,\bX) \mathbf{t}_i(w,\bX) \lambda_{\boldsymbol{\theta}_0}(u,\bX) \lambda_{\boldsymbol{\theta}_0}(\{v,w\}, \bX)\right ] \hspace{-.15cm} \mathrm{d}u \mathrm{d}v \mathrm{d}w.
\end{align*}
Since $\lambda_{\boldsymbol{\theta}_0}(\{u,v\}, \bX) = \lambda_{\boldsymbol{\theta}_0}(u, \bX \cup v) \lambda_{\boldsymbol{\theta}_0}(v, \bX)$ for any $u,v \in D_n$, it follows from Cauchy-Schwarz inequality  and conditions ($\mathcal C$.\ref{C:cov})-($\mathcal C$.\ref{C:locsta}) that 
\[
\EE \left [ \left ( \sum_{u \in \bX_{\Delta_{n,j}} } \mathbf{t}_i(u,\bX \setminus u)\right )^{4} \right ] < \infty.
\]
We also have
\begin{align*}
\EE \left [ \left ( \int_{\Delta_{n,j}} \mathbf{t}_i(u,\bX) \lambda_{\boldsymbol{\theta}_0}(u,\bX)  \mathrm{d}u  \right )^{4} \right ] = & \int_{\Delta_{n,j}^4} \EE \left [   \prod_{k=1}^4 \mathbf{t}_i(u_k,\bX) \lambda_{\boldsymbol{\theta}_0}(u_k, \bX) \mathrm{d}u_k \right ].
\end{align*}
Again combining conditions ($\mathcal C$.\ref{C:cov})-($\mathcal C$.\ref{C:locsta}) and Cauchy-Schwarz inequality, it follows that 
\[
\EE \left [ \left ( \int_{\Delta_{n,j}} \mathbf{t}_i(u,\bX) \lambda_{\boldsymbol{\theta}_0}(u,\bX)  \mathrm{d}u  \right )^{4} \right ] < \infty.
\]
We proceed similarly to prove that the other terms in (\ref{binom}) are also finite. This finally implies that
\begin{align}
\label{spuissance}
\EE \left \{ \left |  \sum_{u \in \bX_{\Delta_{n,j}}} \mathbf{t}_i(u,\bX \setminus u)-\int_{\Delta_{n,j}} \mathbf{t}_i(u,\bX) \lambda_{\boldsymbol{\theta}_0}(u,\bX) \mathrm{d}u  \right |^{4} \right \} <\infty.
\end{align}
Assumption $(a)$ is then verified by choosing $q=1$. \\
\item Condition (d): From Lemma 2.2 in~\cite{coeurjolly2017parametric}, we have 
\[
\EE(\bY_{n,j}|\bX_{n,k}, k \neq j)=0,
\]
which implies assumption $(d)$. \\
\item Condition (c): By the definition of $\bY_{n,j}$, we have
\begin{align}
\label{sigma:n}
\hat{\boldsymbol{\Sigma}}_n=\mathbf{A}_{n,11}(\bX;\boldsymbol{\theta}_{0})+\mathbf{B}_{n,11}(\bX;\boldsymbol{\theta}_{0}) \quad \mbox{and} \quad 
\boldsymbol{\Sigma}_n=\mathbf{A}_{n,11}(\boldsymbol{\theta}_{0})+\mathbf{B}_{n,11}(\boldsymbol{\theta}_{0}).  
\end{align}
Condition ($\mathcal C$.\ref{C:BnCn}) implies that there exists  a positive definite matrix $\boldsymbol{Q}$ such that for all sufficiently large $n$, we have $|D_n|^{-1} \boldsymbol{\Sigma}_n \geq \boldsymbol{Q}$. By condition ($\mathcal C$.\ref{C:Dn}) and the definition of $\boldsymbol{\mathcal{I}_n}$, we have $|\boldsymbol{\mathcal{I}_n}|=O(|D_n|)$. We therefore have $|\boldsymbol{\mathcal{I}_n}|^{-1} \boldsymbol{\Sigma}_n \geq \boldsymbol{Q}$. \\
\item Condition (b): Finally, we have
\[
\EE \bY_{n,j}\bY_{n,k}^\top=\mathrm{cov} \left(I_{\Delta_{n,j}}(\bX,\mathbf{t}),I_{\Delta_{n,k}}(\bX,\mathbf{t}) \right)
\]
where $I_{\Delta_{n,j}}(\bX,\mathbf{t})$ is the $\mathbf{t}$-innovation function defined, see e.g. \cite{baddeley2005residual} or~\cite{coeurjolly2013fast}, by
\[
I_{\Delta_{n,j}}(\bX,\mathbf{t})=\sum_{u \in \bX_{\Delta_{n,j}}} \mathbf{t}(u,\bX \setminus u)-\int_{\Delta_{n,j}} \mathbf{t}(u,\bX) \lambda_{\boldsymbol{\theta}_0}(u,\bX) \mathrm{d}u.
\]
From~\cite{coeurjolly2013fast}, we can show that
\begin{align}
\label{covinnov}
\EE \bY_{n,j}\bY_{n,k}^\top=O(1).
\end{align}
We also have
\begin{align*}
\sum_{ j,k \in \boldsymbol{\mathcal{J}_n}} \EE(\bY_{n,j}\bY_{n,k}^\top)=\sum_{\substack{
   j,k \in \boldsymbol{\mathcal{J}_n} \\
   | k-j | \leq R
  }} \EE(\bY_{n,j} \bY_{n,k}^\top) + \sum_{\substack{
   j,k \in \boldsymbol{\mathcal{J}_n} \\
   | k-j | > R
  }} \EE(\bY_{n,j} \bY_{n,k}^\top).
\end{align*}
From the finite range property~(\ref{fr}), $\bY_{n,j}$ is a function of $\bX_{n,k}$ for any $k \in \boldsymbol{\mathcal{J}_n}$ such that $| k-j | \leq R$. Thus, if $| k-j | > R$, we have 
\begin{align*}
\EE \bY_{n,j}\bY_{n,k}^\top= &\EE \left [ \EE \left [\bY_{n,j}\bY_{n,k}^\top| \bX_{n,k}, k\neq j \right ]  \right] \\
=& \EE \left [ \EE \left [\bY_{n,j}| \bX_{n,k}, k\neq j \right ]  \bY_{n,k}^\top\right] \\
=&0
\end{align*}
whereby we deduce that 
\begin{align}
\label{assb}
\sum_{ j,k \in \boldsymbol{\mathcal{J}_n}} \EE(\bY_{n,j}\bY_{n,k}^\top)=\sum_{\substack{
   j,k \in \boldsymbol{\mathcal{J}_n} \\
   | k-j | \leq R
  }} \EE(\bY_{n,j} \bY_{n,k}^\top).
\end{align}
Equations (\ref{covinnov}) and (\ref{assb}) imply assumption $(b)$. 
\end{itemize}
Conditions (a)-(d) being valid, we can now apply Theorem A.1 in~\cite{coeurjolly2017parametric} and get 
\begin{align}
\label{asympt}
\boldsymbol{\Sigma}_n^{-1/2} \boldsymbol{S}_n \xrightarrow{d} \mathcal{N}(\mathbf{0},\mathbf{I}_{m}), 
\end{align}
where $\boldsymbol{S}_n=\mathbf{LPL}^{(1)}_{n,1}( \bX;\boldsymbol{\theta}_0)$.
We have
\[
\hat{\boldsymbol{\Sigma}}_n^{-1/2} \boldsymbol{S}_n= (\boldsymbol{\Sigma}_n^{-1} \hat{\boldsymbol{\Sigma}}_n)^{-1/2} \boldsymbol{\Sigma}_n^{-1/2} \boldsymbol{S}_n \quad \mbox{and} \quad \boldsymbol{\Sigma}_n^{-1} \hat{\boldsymbol{\Sigma}}_n - \mathbf{I}_{m} = \boldsymbol{\Sigma}_n^{-1} (\hat{\boldsymbol{\Sigma}}_n -  \boldsymbol{\Sigma}_n).
\]
As mentioned in Theorem A.1, assumptions $(a)$ and $(b)$ imply that 
\begin{align}
\label{sigmahat}
\mbox{as} \quad n \to \infty, \quad  |\boldsymbol{\mathcal{I}}_n|^{-1} (\hat{\boldsymbol{\Sigma}}_n-\boldsymbol{\Sigma}_n) \to 0 \quad \mbox{in} \quad L^2.
\end{align}
Since $\hat{\boldsymbol{\Sigma}}_n-\boldsymbol{\Sigma}_n=o_{\mathrm{P}}(|D_n|)$ by (\ref{sigmahat}) and $\boldsymbol{\Sigma}_n^{-1}=O(|D_n|^{-1})$ by condition ($\mathcal C$.\ref{C:BnCn}), we have 
%the definition of $\mathbf{A}_{n,11}(\boldsymbol \theta_{0})$ and $\mathbf{B}_{n,11}(\boldsymbol \theta_{0})$, we have 
\[
\boldsymbol{\Sigma}_n^{-1} \hat{\boldsymbol{\Sigma}}_n - \mathbf{I}_{m}=o_{\mathrm{P}}(1), \; \mbox{which gives}  \quad \boldsymbol{\Sigma}_n^{-1} \hat{\boldsymbol{\Sigma}}_n=1+o_{\mathrm{P}}(1).
\]
The latter equality means that $\boldsymbol{\Sigma}_n^{-1} \hat{\boldsymbol{\Sigma}}_n$ converges in probability to $1$, which finally implies that 
\begin{align}
\label{convergence:proba}
(\boldsymbol{\Sigma}_n^{-1} \hat{\boldsymbol{\Sigma}}_n)^{-1/2}=1+o_{\mathrm{P}}(1).
\end{align}
We deduce from (\ref{asympt}) and Slutsky's Theorem that
\[
\hat{\boldsymbol{\Sigma}}_n^{-1/2} \boldsymbol{S}_n \xrightarrow{d} \mathcal{N}(\mathbf{0},\mathbf{I}_{m}).
\]
\end{proof}
%\end{comment}
%%%%%%%%%%%%%%%%%%%%%%%%%%%%%%%%%%%%%
%%%%%%%%%%%%%%%%%%%%%%%%%%%%%%%%%%%%%
%\begin{comment}
\section{ Proof of Theorem~\ref{THM:SPARSITYCLT}}  \label{proof3}

\begin{proof} We now focus on the proof of Theorem~\ref{THM:SPARSITYCLT}. Since Theorem~\ref{THM:SPARSITYCLT}(i) is proved by Lemma~\ref{lemma2}, we only need to prove Theorem~\ref{THM:SPARSITYCLT}(ii), which is the asymptotic normality of $\boldsymbol {\hat{\theta}}_1$. As shown in Theorem~\ref{THM:ROOT}, there is a root-($|D_n|/p_n$) consistent local maximizer $\boldsymbol{\hat{\theta}}$ of $Q_n(\bX;\boldsymbol \theta)$, and it can be shown that there exists an estimator $\boldsymbol {\hat{\theta}}_1$ in Theorem~\ref{THM:ROOT} that is a root-$(|D_n|/p_n)$ consistent local maximizer of $ Q_n\Big(\bX;({\boldsymbol \theta_1}^\top,\mathbf{0}^\top)^\top \Big)$, which is regarded as a function of  $\boldsymbol {\theta}_1$, and that satisfies
\begin{align*}
\frac {\partial Q_n(\bX;\boldsymbol {\hat \theta})}{\partial\theta_j}=0 \quad
\mbox { for } j=1,\ldots,m=l+s
\mbox {, and } \boldsymbol{\hat \theta}=( \boldsymbol {\hat{\theta}}_1^\top,\mathbf{0}^ \top)^\top.
\end{align*}
There exists $t\in (0,1)$ and $\boldsymbol{\breve{\theta}}= \boldsymbol\theta_0 + t(\boldsymbol{\hat \theta}-\boldsymbol\theta_0)$  such that
\begin{align}
0
=&\frac {\partial \mathrm{LPL}_n{(\bX;\boldsymbol{\hat \theta})}}{\partial\theta_j}-|D_n|p'_{\lambda_{n,j}}(|\hat \theta_{j}|)\sign(\hat \theta_j) \nonumber\\
=&\frac {\partial \mathrm{LPL}_n{(\bX;\boldsymbol \theta_0)}}{\partial\theta_j}+{\sum_{r=1}^{m} \frac {\partial^2 \mathrm{LPL}_n{(\bX; \boldsymbol{\breve{\theta}})}}{\partial\theta_j \partial\theta_r}}({\hat \theta_r}-\theta_{0r})-|D_n|p'_{\lambda_{n,j}}(|\hat \theta_{j}|)\sign(\hat \theta_j) \nonumber\\
=&\frac {\partial \mathrm{LPL}_n{(\bX;\boldsymbol \theta_0)}}{\partial\theta_j}+{\sum_{r=1}^m \frac {\partial^2 \mathrm{LPL}_n{(\bX; \boldsymbol \theta_0)}}{\partial\theta_j \partial\theta_r}}({\hat \theta_r}-\theta_{0r})+{\sum_{r=1}^m \Psi_{n,jr}({\hat \theta_r}-\theta_{0r})} \nonumber \\
&-|D_n|p'_{\lambda_{n,j}}(|\theta_{0j}|)\sign(\theta_{0j})-|D_n|\phi_{n,j}, \label{eq:0equal}
\end{align}
where 
\begin{align*}
\Psi_{n,jr}=\frac {\partial^2 \mathrm{LPL}_n{(\bX;\boldsymbol{\breve{\theta}})}}{\partial\theta_j \partial\theta_r}-\frac {\partial^2 \mathrm{LPL}_n{(\bX;\boldsymbol \theta_0)}}{\partial\theta_j \partial\theta_r}
\end{align*}
and $\phi_{n,j}=p'_{\lambda_{n,j}}(|\hat \theta_{j}|)\sign(\hat \theta_j)-p'_{\lambda_{n,j}}(|\theta_{0j}|)\sign(\theta_{0j})$. The rest of the proof for $\phi_{n,j}$ follows the same lines and arguments in \citet{choiruddin2018convex}, that is
\begin{align}
 \label{eq:phinj}
\phi_{n,j}=p''_{\lambda_{n,j}}(|\theta_{0j}|)(\hat \theta_j- \theta_{0j})(1+o_{\mathrm{P}}(1))+O_{\mathrm{P}}(p_n/|D_n|)+o_{\mathrm{P}}(|D_n|^{-1/2}).
\end{align}
Let $\mathbf{LPL}^{(1)}_{n,1}(\bX;\boldsymbol \theta_{0})$ (resp. $\mathbf{LPL}^{(2)}_{n,1}(\bX;\boldsymbol \theta_{0})$) be the first $m=l+s$ components (resp. $m \times m$ top-left corner) of $\mathbf{LPL}^{(1)}_{n}(\bX;\boldsymbol \theta_{0})$ (resp. $\mathbf{LPL}^{(2)}_{n}(\bX;\boldsymbol \theta_{0})$). Let also $\boldsymbol \Psi_n$ be the $m \times m$ matrix containing $\Psi_{n,jr}, j,r=1,\ldots,m$. Finally, let the vector $\mathbf{p}'_n$, the vector $\boldsymbol \phi_n$ and the $m \times m$ matrix $\mathbf{M}_n$ be defined by
\begin{align*}
\mathbf{p}'_n&=\{p'_{\lambda_{n,1}}(|\theta_{01}|)\sign(\theta_{01}),\ldots,p'_{\lambda_{n,m}}(|\theta_{0m}|)\sign(\theta_{0m})\}^\top, \\
\boldsymbol \phi_n&=\{\phi_{n,1},\ldots,\phi_{n,m}\}^\top, \mbox{ and}\\
\mathbf{M}_n&=\{ \mathbf{A}_{n,11}(\bX;\boldsymbol \theta_{0})+\mathbf{B}_{n,11}(\bX;\boldsymbol \theta_{0})\}^{-1/2}.
\end{align*}
We rewrite both sides of~\eqref{eq:0equal} as
\begin{equation}
\mathbf{LPL}^{(1)}_{n,1}(\bX;\boldsymbol \theta_{0})+\mathbf{LPL}^{(2)}_{n,1}(\bX;\boldsymbol \theta_{0})(\boldsymbol{\hat \theta}_1-\boldsymbol \theta_{01})+ \boldsymbol \Psi_n (\boldsymbol{\hat \theta}_1-\boldsymbol \theta_{01}) -|D_n| \mathbf{p}'_n-|D_n| \boldsymbol \phi_n   =0.\label{eq:0vec}
\end{equation}
By definition of $\boldsymbol \Pi_n$ given by (\ref{eq:pi}) and from~\eqref{eq:phinj}, we obtain $\boldsymbol \phi_n=\boldsymbol \Pi_n (\boldsymbol{\hat \theta}_1-\boldsymbol \theta_{01})\big(1+o_{\mathrm{P}}(1)\big)+o_{\mathrm P}(p_n/|D_n|)+o_{\mathrm P}(|D_n|^{-1/2})$. Using this, we deduce, by premultiplying both sides of~\eqref{eq:0vec} by $\mathbf M_n$, that
\begin{align*}
\mathbf{M}_n \mathbf{LPL}^{(1)}_{n,1}(\bX;\boldsymbol \theta_{0})-&\mathbf{M}_n \big(\mathbf{A}_{n,11}(\bX;\boldsymbol \theta_{0})+ |D_n| \boldsymbol \Pi_n\big)(\boldsymbol{\hat \theta}_1-\boldsymbol \theta_{01}) \\
& =O(|D_n| \, \|\mathbf{M}_n \mathbf{p}'_n \|) + o_{\mathrm{P}}(|D_n| \, \|\mathbf{M}_n\boldsymbol \Pi_n (\boldsymbol{\hat \theta}_1-\boldsymbol \theta_{01}) \|)  \\
& \quad+ O_\mathrm{P} (\|\mathbf M_n \| \; p_n)+ o_\mathrm{P} (\|\mathbf M_n \| \; |D_n|^{1/2}) \\
& \quad+ O_{\mathrm{P}} (\|\mathbf M_n \boldsymbol \Psi_n (\boldsymbol{\hat \theta}_1-\boldsymbol \theta_{01}) \|) .
\end{align*}
Now, using $\hat{\boldsymbol{\Sigma}}_n$ and $\boldsymbol{\Sigma}_n$ defined in (\ref{sigma:n}) we get
\begin{align*}
\mathbf{M}_n = &\left(\boldsymbol{\Sigma}_n^{-1} \hat{\boldsymbol{\Sigma}}_n \right)^{-\frac12} \boldsymbol{\Sigma}_n^{-\frac12} 
=  \Big(1+o_{\mathrm{P}}(1) \Big) \, \times \, O\left(|D_n|^{-1/2}\right).
\end{align*}
where the latter equation ensues from (\ref{convergence:proba}) and condition ($\mathcal C$.\ref{C:BnCn}). This implies that 
 $\|\mathbf M_n\|=O_{\mathrm{P}}(|D_n|^{-1/2})$. We have $\| \boldsymbol \Psi_n\|=O_{\mathrm{P}}(\sqrt{p_n|D_n|})$  by conditions ($\mathcal C$.\ref{C:Theta})-($\mathcal C$.\ref{C:locsta}) and by Theorem~\ref{THM:ROOT}, and $\|\boldsymbol{\hat \theta}_1-\boldsymbol \theta_{01}\|=O_{\mathrm{P}}(\sqrt{p_n/|D_n|})$  by Theorem~\ref{THM:ROOT} and by Theorem~\ref{THM:SPARSITYCLT}(i).
Finally, since by assumptions $a_n\sqrt{|D_n|}\to 0$, $c_n\sqrt{p_n}\to \infty$ and $p_n^2/|D_n|\to 0$ as $n\to \infty$, we deduce that
\begin{align*}
|D_n| \,\|\mathbf{M}_n \mathbf{p}'_n\|&=O_{\mathrm{P}}(a_n|D_n|^{1/2})=o_{\mathrm{P}}(1), \\
|D_n| \, \|\mathbf{M}_n\boldsymbol \Pi_n (\boldsymbol{\hat \theta}_1-\boldsymbol \theta_{01}) \| &= O_{\mathrm{P}}\left(c_n\sqrt{p_n}\right)=o_{\mathrm{P}}(1),\\
\|\mathbf M_n \| \; |D_n|^{1/2} &= O_{\mathrm{P}}(1),\\
\|\mathbf M_n \| \; p_n &= O_{\mathrm{P}}\left(\sqrt{\frac{p_n^2}{|D_n|}}\right)=o_{\mathrm{P}}(1),\\
\|\mathbf{M}_n \boldsymbol \Psi_n (\boldsymbol{\hat \theta}_1-\boldsymbol \theta_{01})\|&=O_{\mathrm{P}}\left(\sqrt{\frac{p_n^2}{|D_n|}}\right)=o_{\mathrm{P}}(1). \\
\end{align*}
%The last two lines are obtained from ($\mathcal C$.\ref{C:pnDn}). 
Therefore, we have that
\begin{align} \label{eq:MnLn}
\mathbf{M}_n \mathbf{LPL}^{(1)}_{n,1}(\bX;\boldsymbol \theta_{0})-\mathbf{M}_n \big(\mathbf{A}_{n,11}(\bX;\boldsymbol \theta_{0})+ |D_n| \Pi_n\big)(\boldsymbol{\hat \theta}_1-\boldsymbol \theta_{01}) =o_{\mathrm{P}}(1).
\end{align}

By~\eqref{eq:clt} in Lemma~\ref{lemma3} and by Slutsky's Theorem, we deduce that
\begin{align*}
\{ \mathbf{A}_{n,11}(\bX;\boldsymbol \theta_{0})+\mathbf{B}_{n,11}(\bX;\boldsymbol \theta_{0})\}^{-1/2}
\{\mathbf{A}_{n,11}(\bX;\boldsymbol \theta_{0})+|D_n| \boldsymbol \Pi_n\}(\boldsymbol{\hat \theta}_1-\boldsymbol \theta_{01})&\xrightarrow{d} \mathcal{N}(0,\mathbf{I}_{m})
\end{align*}
as $n \to \infty$, which can be rewritten, in particular under ($\mathcal C$.\ref{C:An}), as 
\[
|D_n|^{1/2}\boldsymbol \Sigma_n(\bX;\boldsymbol \theta_{0})^{-1/2}(\boldsymbol{\hat \theta}_1-\boldsymbol \theta_{01})\xrightarrow{d}\mathcal{N}(0,\mathbf{I}_{m})	
\]
where $\mathbf \Sigma_n(\bX;\boldsymbol \theta_{0})$ is given by~\eqref{eq:Sigman}.
\end{proof}
%\end{comment}
% \section*{Acknowledgements}

%\begin{comment}
\newpage
\section{Tables of the simulation results} \label{tab:sim}

%Lasso
\setlength{\tabcolsep}{1pt}
\renewcommand{\arraystretch}{1.5}
\begin{table}[h]
\caption{Empirical prediction properties (Bias, SD, and RMSE) and empirical selection properties (TPR, and FPR in $\%$) based on 500 replications of Strauss and Geyer models using cBIC, and cERIC for the Lasso penalty function and the regularized pseudo-likelihood function. The mean number of points $n$ under each model is provided.}
\label{table:lasso} 
\centering
%\begin{tabular}{c l c c | ccccc | ccccc}
\begin{tabular}{@{\extracolsep{1pt}}c l c c | ccccc | ccccc @{}}
\hline
\hline 
 \multicolumn{1}{c}{Spatial} & \multicolumn{1}{c}{Model} & \multicolumn{1}{c}{Interaction} & \multicolumn{1}{c}{Av. number} & \multicolumn{5}{c}{cBIC} & \multicolumn{5}{c}{cERIC} \\ 
 \cline{5-9} \cline{9-14}
domain &  & parameter & of points (n) & Bias & SD & RMSE & FPR & TPR & Bias & SD & RMSE & FPR & TPR  \\ 
  \hline
  \hline
  & \multicolumn{11}{c}{Scenario~\ref{sce1}}\\
\hline
  \multirow{3}{*}{$W_1$} & Strauss & $\gamma=0.2$ & 101 & 2.07 & 0.77 & 2.21 & 2 & 24 & 1.92 & 0.76 & 2.06 & 2 & 42\\ 
   & Strauss & $\gamma=0.5$ & 138 & 1.66 & 1.11 &  2 & 11 & 43 & 1.66 & 0.67 & 1.79 & 3 & 61\\ 
   & Geyer  & $\gamma=1.5$ & 750 & 0.22 & 0.69 &  0.72 & 51 & 100 & 0.41 & 0.57 & 0.7  & 33 & 100\\ 
   %& Area  & $\eta=1.2$ &  596& 1.46& 1.66 & 2.21 & 42 & 96 &1.74 & 1.52 & 2.31 & 29 & 98\\ 
\hline
  \multirow{3}{*}{$W_2$} & Strauss & $\gamma=0.2$ & 395 & 0.46 & 0.82 & 0.94  & 36 & 100 & 0.79 & 0.41 & 0.89 & 2 & 100\\ 
   & Strauss & $\gamma=0.5$ &  540& 0.45 & 0.77 & 0.89  & 40 & 100 & 0.77 &  0.33& 0.84 & 3 & 100\\ 
  & Geyer  & $\gamma=1.5$ & 2968 & 0.29 & 0.27 & 0.4  & 18 & 100 & 0.33 & 0.22 & 0.4 & 6 & 100\\ 
 % & Area  & $\eta=1.2$ &  2377& 0.73& 0.6 & 0.94  & 33 & 100 & 0.92 & 0.49 &  1.04 & 16 & 100\\
   \hline
  \multirow{3}{*}{$W_3$} & Strauss & $\gamma=0.2$ & 1137 & 0.5 & 0.46 & 0.68 & 15 & 100 & 0.56 & 0.27 & 0.62 & 2 & 100\\ 
   & Strauss & $\gamma=0.5$ &  1484& 0.42 & 0.46 & 0.62 & 24 & 100 & 0.52 & 0.22 & 0.56 & 2 & 100\\ 
  & Geyer  & $\gamma=1.5$ &  5630& 0.26 & 0.2 & 0.33  & 15 & 100 & 0.29 & 0.14 & 0.32 & 2 & 100\\ 
  %& Area  & $\eta=1.2$ &  & &  &   &  &  &  &  &   &  & \\
   \hline
   & \multicolumn{11}{c}{Scenario~\ref{sce2}} \\
   \hline
   \multirow{3}{*}{$W_1$} & Strauss & $\gamma=0.2$ &  101& 1.61 & 5.4 & 5.63 & 12 & 38 & 1.61 & 5.14 & 5.39 & 11 & 46\\ 
   & Strauss & $\gamma=0.5$ &  138& 1.86 & 1.75 & 2.55 & 1 & 36 & 1.76 & 0.34 & 1.79 & 0 & 55\\ 
   & Geyer  & $\gamma=1.5$ &  749& 1.34 & 2.91 & 3.2 & 39 & 43 & 1.27 & 2.7 & 2.98 & 38& 51\\ 
   %& Area  & $\eta=1.2$ & 594 & 2.82& 4.95 & 5.7  & 57 & 57 & 3.73 & 4.38 & 5.75  & 40 & 42\\
\hline
  \multirow{3}{*}{$W_2$} & Strauss & $\gamma=0.2$ & 394 & 1.63 & 0.44 & 1.69 & 0 & 55 & 1.64 & 0.4 & 1.69 & 0 & 55\\ 
   & Strauss & $\gamma=0.5$ & 538 & 1.58 & 2.32 & 2.81  & 4 & 57 & 1.66 & 0.41 & 1.71 & 0 & 56\\ 
  & Geyer  & $\gamma=1.5$ &2964 & 1.48 & 0.83 & 1.7 & 2 & 44 & 1.1 & 0.48 & 1.2 & 3 & 95\\ 
  %& Area  & $\eta=1.2$ & 2379 & 5.34& 2.11 &  5.75 & 11 & 11 & 5.34 & 2.1 & 5.74  & 11 & 12\\
   \hline
  \multirow{3}{*}{$W_3$} & Strauss & $\gamma=0.2$ & 1137& 1.63 & 1.42 & 2.16& 3 & 55 & 1.7 & 0.38 & 1.74& 0& 53\\ 
   & Strauss & $\gamma=0.5$ & 1485& 1.61 & 1.52 & 2.21& 7 & 56 & 1.73 & 0.51 & 1.8 & 0 & 53\\ 
  & Geyer  & $\gamma=1.5$ & 5623& 1.35 & 0.95 & 1.65 & 1 & 43 & 0.82 & 0.41 & 0.92 & 2 & 97\\ 
  %& Area  & $\eta=1.2$ &  & &  &   &  &  &  &  &   &  & \\
   \hline
 \end{tabular}
\end{table}

\newpage

\setlength{\tabcolsep}{1pt}
\renewcommand{\arraystretch}{1.5}
\begin{table}[!ht]
\caption{Empirical prediction properties (Bias, SD, and RMSE) and empirical selection properties (TPR, and FPR in $\%$) based on 500 replications of Strauss and Geyer models using cBIC, and cERIC for the Ridge penalty function and the regularized pseudo-likelihood function. The mean number of points $n$ under each model is provided.}
\label{table:ridge} 
\centering
%\begin{tabular}{c l c c | ccccc | ccccc}
\begin{tabular}{@{\extracolsep{1pt}}c l c c | ccccc | ccccc @{}}
\hline
\hline 
 \multicolumn{1}{c}{Spatial} & \multicolumn{1}{c}{Model} & \multicolumn{1}{c}{Interaction} & \multicolumn{1}{c}{Av. number} & \multicolumn{5}{c}{cBIC} & \multicolumn{5}{c}{cERIC} \\ 
 \cline{5-9} \cline{9-14}
domain &  & parameter & of points (n) & Bias & SD & RMSE & FPR & TPR & Bias & SD & RMSE & FPR & TPR  \\ 
  \hline
  \hline
  & \multicolumn{11}{c}{Scenario~\ref{sce1}}\\
\hline
  \multirow{3}{*}{$W_1$} & Strauss & $\gamma=0.2$ & 101 & 1.33 & 0.91 & 1.61 & 100 & 100 & 2.4 & 0.19 & 2.41 & 100 & 100\\ 
   & Strauss & $\gamma=0.5$ & 138 & 1.35 & 0.72 & 1.53 & 100 & 100 & 2.27 & 0.06 & 2.27 & 100 & 100\\ 
   & Geyer  & $\gamma=1.5$ & 750 & 1.43 & 0.39 & 1.48 & 100 & 100 & 1.46 & 0.39 & 1.51 & 100 & 100\\ 
   %& Area  & $\eta=1.2$ & 596 & 2.04& 0.62 & 2.13  & 100 & 100 & 2.81 & 2.02 & 3.46  &100  & 100\\
\hline
  \multirow{3}{*}{$W_2$} & Strauss & $\gamma=0.2$ & 395& 1.33& 0.55& 1.44& 100 & 100 & 1.83 & 0.63 & 1.94 & 100& 100\\ 
   & Strauss & $\gamma=0.5$ & 540& 1.37& 0.44& 1.44& 100 & 100 & 1.65 & 0.43 & 1.71 &100  & 100\\ 
  & Geyer  & $\gamma=1.5$ & 2968& 1.44 & 0.2& 1.45 & 100 & 100 & 1.44 & 0.2 & 1.45& 100& 100\\ 
  %& Area  & $\eta=1.2$ & 2377 & 1.95& 0.3 & 1.97  & 100 & 100 & 1.95 & 0.3 & 1.97  & 100 & 100\\
   \hline
  \multirow{3}{*}{$W_3$} & Strauss & $\gamma=0.2$ & 1137 & 1.37& 0.38 & 1.42& 100 & 100 & 1.52 &0.31  & 1.55 & 100& 100\\ 
   & Strauss & $\gamma=0.5$ &1484 & 1.39& 0.31& 1.42& 100 & 100 & 1.45 & 0.3 & 1.48 & 100 &100 \\ 
  & Geyer  & $\gamma=1.5$ &5630 & 1.44& 0.15& 1.45& 100 &100  & 1.44 & 0.15 & 1.45&100 & 100\\ 
 % & Area  & $\eta=1.2$ &  & &  &   &  &  &  &  &   &  & \\
   \hline
   & \multicolumn{11}{c}{Scenario~\ref{sce2}} \\
   \hline
   \multirow{3}{*}{$W_1$} & Strauss & $\gamma=0.2$ &101& 0.67& 1.06 & 1.25 & 100 & 100 & 2.4 & 0.19 & 2.41 & 100 & 100  \\ 
   & Strauss & $\gamma=0.5$ &138 & 0.72& 0.75& 1.04& 100& 100 & 2.27 & 0.06 & 2.27& 100&100 \\ 
   & Geyer  & $\gamma=1.5$ &749 & 1.04& 0.36& 1.1& 100& 100&1.05 & 0.37 & 1.11 & 100& 100\\ 
   %& Area  & $\eta=1.2$ & 594 & 2.37& 0.58 & 2.44  & 100 & 100 & 5.81 & 1.73 & 6.06  & 100 & 100\\
\hline
  \multirow{3}{*}{$W_2$} & Strauss & $\gamma=0.2$ &394 & 0.73& 0.56& 0.92 & 100 & 100 & 2.41 &0.09  & 2.41& 100&100 \\ 
   & Strauss & $\gamma=0.5$ & 538& 0.82& 0.39& 0.91& 100&100 &2.38 & 0.03& 2.28& 100& 100\\ 
  & Geyer  & $\gamma=1.5$ & 2964& 1.03& 0.17& 1.04& 100& 100&1.03 & 0.17 &1.04  &100 & 100\\ 
  %& Area  & $\eta=1.2$ &  2379& 2.24& 0.28 & 2.26 & 100 & 100 & 2.24 & 0.28 & 2.26  & 100 & 100\\
   \hline
  \multirow{3}{*}{$W_3$} & Strauss & $\gamma=0.2$ & 1137& 0.82& 0.32& 0.88&100 & 100& 1.16 &0.25  & 1.19 & 100 & 100\\ 
   & Strauss & $\gamma=0.5$ & 1485& 0.9&0.23 & 0.93&100 &100  & 0.94 &0.22  &0.97 &100& 100\\ 
  & Geyer  & $\gamma=1.5$ & 5623& 1.05& 0.1& 1.05& 100&100 &1.05 & 0.1 & 1.05 &100  & 100\\ 
  %& Area  & $\eta=1.2$ &  & &  &   &  &  &  &  &   &  & \\
   \hline
 \end{tabular}
\end{table}

%Elastic Net 
\newpage

\setlength{\tabcolsep}{1pt}
\renewcommand{\arraystretch}{1.5}
\begin{table}[!ht]
\caption{Empirical prediction properties (Bias, SD, and RMSE) and empirical selection properties (TPR, and FPR in $\%$) based on 500 replications of Strauss and Geyer models using cBIC, and cERIC for the Elastic net penalty function and the regularized pseudo-likelihood function. The mean number of points $n$ under each model is provided.}
\label{table:enet} 
\centering
%\begin{tabular}{c l c c | ccccc | ccccc}
\begin{tabular}{@{\extracolsep{1pt}}c l c c | ccccc | ccccc @{}}
\hline
\hline 
 \multicolumn{1}{c}{Spatial} & \multicolumn{1}{c}{Model} & \multicolumn{1}{c}{Interaction} & \multicolumn{1}{c}{Av. number} & \multicolumn{5}{c}{cBIC} & \multicolumn{5}{c}{cERIC} \\ 
 \cline{5-9} \cline{9-14}
domain &  & parameter & of points (n) & Bias & SD & RMSE & FPR & TPR & Bias & SD & RMSE & FPR & TPR  \\ 
  \hline
  \hline
  & \multicolumn{11}{c}{Scenario~\ref{sce1}}\\
\hline
  \multirow{3}{*}{$W_1$} & Strauss & $\gamma=0.2$ & 101 & 1.89 & 0.5 & 1.96 & 8 & 70 & 1.85 & 0.4 & 1.89 & 10 & 82\\ 
   & Strauss & $\gamma=0.5$ & 138 & 1.76 & 0.35 &  1.79 & 10 & 92 & 1.68 & 0.26 & 1.7 & 12 & 99 \\ 
   & Geyer  & $\gamma=1.5$ & 750 & 1.39 & 0.32 & 1.43 & 12 & 100 & 1.38 & 0.31 & 1.41 & 12 & 100\\
   %& Area  & $\eta=1.2$ &  596& 2.59& 0.83 &  2.72 & 14 & 100 & 2.62 & 0.71 & 2.71  & 12 & 100\\ 
\hline
  \multirow{3}{*}{$W_2$} & Strauss & $\gamma=0.2$ & 395 & 1.56& 0.23 & 1.58&8 & 100 & 1.49 & 0.23 & 1.51 & 10 & 100\\ 
   & Strauss & $\gamma=0.5$ & 540 & 1.53 & 0.18 & 1.54 & 8& 100 & 1.44 & 0.18 & 1.45 & 11 & 100\\ 
  & Geyer  & $\gamma=1.5$ & 2968 & 1.2& 0.2 & 1.22 & 9& 100 & 1.18 & 0.17 & 1.19 & 9 & 100\\ 
  %& Area  & $\eta=1.2$ &  2377& 1.76& 0.4 &1.8   & 10 & 100 & 1.7 & 0.34 & 1.73  & 11 & 100\\
   \hline
  \multirow{3}{*}{$W_3$} & Strauss & $\gamma=0.2$ & 1137 & 1.43& 0.15&1.44 & 5& 100 & 1.37 & 0.15 & 1.38 & 7& 100\\ 
   & Strauss & $\gamma=0.5$ &  1484& 1.39& 0.13 & 1.4& 6& 100 & 1.32& 0.13&1.33 & 9&100 \\ 
  & Geyer  & $\gamma=1.5$ & 5630 & 1.14 &0.11  & 1.15 & 6& 100 & 1.12& 0.11& 1.13& 7&100 \\ 
  %& Area  & $\eta=1.2$ &  & &  &   &  &  &  &  &   &  & \\
   \hline
   & \multicolumn{11}{c}{Scenario~\ref{sce2}} \\
   \hline
   \multirow{3}{*}{$W_1$} & Strauss & $\gamma=0.2$ & 101& 1.74& 4.23& 4.57& 15& 34 & 1.72 & 3.95& 4.31&15&54 \\ 
   & Strauss & $\gamma=0.5$ & 138& 2.17&1.79& 2.81&1 &17 & 2.08& 0.15& 2.09 & 2 & 49\\ 
   & Geyer  & $\gamma=1.5$& 749& 1.17& 2.77& 3.01& 50& 50& 1.15& 2.53&2.78&50 &53 \\ 
   %& Area  & $\eta=1.2$ & 594 &1.7 & 4.8 & 5.09  &  75& 75 & 2.69 &  4.34&  5.11 & 58 & 58\\
\hline
  \multirow{3}{*}{$W_2$} & Strauss & $\gamma=0.2$ &394 & 2.32 & 0.81 & 2.46& 1 & 12 & 2.14& 0.12& 2.14 & 1 & 50\\ 
   & Strauss & $\gamma=0.5$ &538 & 1.29 & 5.5& 5.65 & 42& 52& 2.05 & 0.07 & 2.05 &2  &50 \\ 
  & Geyer  & $\gamma=1.5$ & 2964& 2.1 & 0.34 & 2.13 & 1&  2& 1.91 & 0.32 & 1.94 & 5 & 51\\ 
  %& Area  & $\eta=1.2$ & 2379 & 5.16& 2.32 & 5.66  & 14 & 14 & 5.16 & 2.3 & 5.65  & 14 &14 \\
   \hline
  \multirow{3}{*}{$W_3$} & Strauss & $\gamma=0.2$ & 1137&2.06& 1.73 & 2.69& 7 &53 &1.9 & 0.73&2.04 &3 & 51\\ 
   & Strauss & $\gamma=0.5$ &1485 & 1.87&1.83&2.62&12 &56&1.61&1.75&2.38&15 & 57\\ 
  & Geyer  & $\gamma=1.5$ &5623&2.13&0.21&2.14 & 0&1& 1.93& 0.32&1.96 & 3& 52\\ 
  %& Area  & $\eta=1.2$ &  & &  &   &  &  &  &  &   &  & \\
   \hline
 \end{tabular}
\end{table}

%Adaptive Lasso
\newpage

\setlength{\tabcolsep}{1pt}
\renewcommand{\arraystretch}{1.5}
\begin{table}[!ht]
\caption{Empirical prediction properties (Bias, SD, and RMSE) and empirical selection properties (TPR, and FPR in $\%$) based on 500 replications of Strauss and Geyer models using cBIC, and cERIC for the Adaptive lasso penalty function and the regularized pseudo-likelihood function. The mean number of points $n$ under each model is provided.}
\label{table:alasso} 
\centering
%\begin{tabular}{c l c c | ccccc | ccccc}
\begin{tabular}{@{\extracolsep{1pt}}c l c c | ccccc | ccccc @{}}
\hline
\hline 
 \multicolumn{1}{c}{Spatial} & \multicolumn{1}{c}{Model} & \multicolumn{1}{c}{Interaction} & \multicolumn{1}{c}{Av. number} & \multicolumn{5}{c}{cBIC} & \multicolumn{5}{c}{cERIC} \\ 
 \cline{5-9} \cline{9-14}
domain &  & parameter & of points (n) & Bias & SD & RMSE & FPR & TPR & Bias & SD & RMSE & FPR & TPR  \\ 
  \hline
  \hline
  & \multicolumn{11}{c}{Scenario~\ref{sce1}}\\
\hline
  \multirow{3}{*}{$W_1$} & Strauss & $\gamma=0.2$ & 101 & 1.94 & 1.05 & 2.21 & 2 & 23 & 1.93 & 0.61 & 2.02 & 1 & 53\\ 
   & Strauss & $\gamma=0.5$ & 138 & 1.58 & 1.2 & 1.98 & 8 & 33 & 1.66 & 0.48 & 1.73 & 0 & 60\\ 
   & Geyer  & $\gamma=1.5$ & 750 & 0.07 & 0.48 &  0.49 & 33 & 100 & 0.45 & 0.32 & 0.55  & 1 & 100\\ 
   %& Area  & $\eta=1.2$ & 596 &0.29 & 0.84 & 0.89  & 42 & 100 & 1.52 & 0.91 & 1.77  & 2 & 100\\
\hline
  \multirow{3}{*}{$W_2$} & Strauss & $\gamma=0.2$ &395 & 0.08& 0.53& 0.54 & 27& 100 & 0.61& 0.27 & 0.67 & 0 & 100\\ 
   & Strauss & $\gamma=0.5$ & 540& 0.06& 0.45 & 0.45& 32 & 100 & 0.57 & 0.16 & 0.59 &0  &100 \\ 
  & Geyer  & $\gamma=1.5$ & 2968& 0.06 & 0.18& 0.19 & 2& 100 & 0.18 & 0.16& 0.24 & 0 & 100\\ 
  %& Area  & $\eta=1.2$ & 2377 & 0.16& 0.29 & 0.33  & 13 & 100 & 0.73 & 0.37 & 0.82  &1  & 100\\
   \hline
  \multirow{3}{*}{$W_3$} & Strauss & $\gamma=0.2$ & 1137 & 0.1& 0.23& 0.25&9  & 100 & 0.38 & 0.12 & 0.4 & 0& 100\\ 
   & Strauss & $\gamma=0.5$ &1484 & 0.06 & 0.19 & 0.2 & 10 & 100 & 0.36 & 0.09 & 0.37 & 0 & 100\\ 
  & Geyer  & $\gamma=1.5$ & 5630& 0.05& 0.09 & 0.1 & 0 & 100 & 0.15 & 0.09 & 0.17 & 0 & 100\\ 
  %& Area  & $\eta=1.2$ &  & &  &   &  &  &  &  &   &  & \\
   \hline
   & \multicolumn{11}{c}{Scenario~\ref{sce2}} \\
   \hline
   \multirow{3}{*}{$W_1$} & Strauss & $\gamma=0.2$ & 101&1.09& 2.3& 2.55 & 5 & 53 & 0.82 & 2.15 & 2.3 & 7 & 78\\ 
   & Strauss & $\gamma=0.5$ & 138 & 0.48& 0.68 & 0.83 & 0 & 86 & 0.48& 0.6 & 0.77 & 3 & 63\\ 
   & Geyer  & $\gamma=1.5$ & 749& 0.09 & 0.35 & 0.36 & 0 & 100 & 0.37 & 0.3 & 0.48  & 0 & 100\\ 
   %& Area  & $\eta=1.2$ &  594& 0.64&  0.77&  1 &0  & 100 & 1.66 & 0.62 & 1.77  &0  & 100\\
\hline
  \multirow{3}{*}{$W_2$} & Strauss & $\gamma=0.2$ & 394 & 0.15 & 0.25 & 0.29  & 0 & 100 & 0.34 &  0.28& 0.44&1 & 100\\ 
   & Strauss & $\gamma=0.5$ & 538 & 0.1 & 0.18 & 0.21  & 0 &100  & 0.27 & 0.22 & 0.35 & 1 & 100\\ 
  & Geyer  & $\gamma=1.5$ & 2964 & 0.04 & 0.17 & 0.17 & 0 & 100 & 0.18 & 0.15 & 0.23 & 0 & 100\\ 
  %& Area  & $\eta=1.2$ & 2379 & 0.23& 0.29 & 0.37  & 0 & 100 & 0.76 & 0.27 & 0.81  & 0 & 100\\
   \hline
  \multirow{3}{*}{$W_3$} & Strauss & $\gamma=0.2$ &1137&0.08&0.13&0.15&0&100&0.25&0.15&0.29&0 & 100\\ 
   & Strauss & $\gamma=0.5$ &1485&0.06& 0.1& 0.12&0&100&0.22&0.12&0.25&0&100 \\ 
  & Geyer  & $\gamma=1.5$ &5623&0.04&0.08&0.09&0&100&0.16&0.08&0.18&0&100 \\ 
  %& Area  & $\eta=1.2$ &  & &  &   &  &  &  &  &   &  & \\
   \hline
 \end{tabular}
\end{table}

%Adaptive Elastic Net 
\newpage

\setlength{\tabcolsep}{1pt}
\renewcommand{\arraystretch}{1.5}
\begin{table}[!ht]
\caption{Empirical prediction properties (Bias, SD, and RMSE) and empirical selection properties (TPR, and FPR in $\%$) based on 500 replications of Strauss and Geyer models using cBIC, and cERIC for the Adaptive Elastic Net penalty function and the regularized pseudo-likelihood function. The mean number of points $n$ under each model is provided.}
\label{table:aenet} 
\centering
%\begin{tabular}{c l c c | ccccc | ccccc}
\begin{tabular}{@{\extracolsep{1pt}}c l c c | ccccc | ccccc @{}}
\hline
\hline 
 \multicolumn{1}{c}{Spatial} & \multicolumn{1}{c}{Model} & \multicolumn{1}{c}{Interaction} & \multicolumn{1}{c}{Av. number} & \multicolumn{5}{c}{cBIC} & \multicolumn{5}{c}{cERIC} \\ 
 \cline{5-9} \cline{9-14}
domain &  & parameter & of points (n) & Bias & SD & RMSE & FPR & TPR & Bias & SD & RMSE & FPR & TPR  \\ 
  \hline
  \hline
  & \multicolumn{11}{c}{Scenario~\ref{sce1}}\\
\hline
  \multirow{3}{*}{$W_1$} & Strauss & $\gamma=0.2$ & 101 & 1.6 & 0.94 & 1.86 & 3 & 53 & 2.17 & 0.48 & 2.22 & 1 & 54\\ 
   & Strauss & $\gamma=0.5$ & 138 & 1.1 & 0.63 &  1.27 & 4 & 90 & 2.07 & 0.36 & 2.1 & 1 & 54\\ 
   & Geyer  & $\gamma=1.5$ & 750 & 0.58 & 0.43 & 0.72 & 4 & 100 & 0.9 & 0.35 & 0.97 & 3 & 100\\
   %& Area  & $\eta=1.2$ & 596 & 1.07& 0.71 & 1.28 & 4 & 100 & 1.54 & 0.75 & 1.71  & 4 & 100\\ 
\hline
  \multirow{3}{*}{$W_2$} & Strauss & $\gamma=0.2$ &395 & 0.77& 0.33 & 0.84 & 2 & 100 & 0.93 & 0.42& 1.02 & 3 & 98\\ 
   & Strauss & $\gamma=0.5$ & 540& 0.73 & 0.3& 0.79 & 3 & 100 & 0.85 & 0.31 & 0.9 & 4 & 100\\ 
  & Geyer  & $\gamma=1.5$ & 2968 & 0.32 & 0.23 & 0.39 & 4 & 100 & 0.65 & 0.22 & 0.69 & 2 & 100\\ 
  %& Area  & $\eta=1.2$ & 2377 & 0.46& 0.32 & 0.56  & 3 & 100 & 0.77 & 0.32 & 0.83  & 3 & 100\\
   \hline
  \multirow{3}{*}{$W_3$} & Strauss & $\gamma=0.2$ & 1137 & 0.61& 0.25 & 0.66 & 2 & 100 & 0.82 & 0.22 & 0.85 & 2 & 100\\ 
   & Strauss & $\gamma=0.5$ &1484 & 0.53 & 0.22 & 0.57 & 2&100  & 0.74 & 0.22 & 0.77 & 2 & 100\\ 
  & Geyer  & $\gamma=1.5$ & 5630 & 0.25 & 0.16 & 0.3 & 2 & 100& 0.5 &0.17  & 0.53 & 2 & 100\\ 
  %& Area  & $\eta=1.2$ &  & &  &   &  &  &  &  &   &  & \\
   \hline
   & \multicolumn{11}{c}{Scenario~\ref{sce2}} \\
   \hline
   \multirow{3}{*}{$W_1$} & Strauss & $\gamma=0.2$ & 101& 1.32 & 1.54 & 2.03& 5& 50 & 0.68 & 1.28 & 1.45 & 9 & 90\\ 
   & Strauss & $\gamma=0.5$ & 138& 0.82& 0.69 & 1.07 & 0 & 82 & 0.47 & 0.56& 0.73 & 6 & 98\\ 
   & Geyer  & $\gamma=1.5$ & 749& 0.31 & 0.39& 0.5  & 0 & 100& 0.47 & 0.31 & 0.56  & 0 & 100\\
   %& Area  & $\eta=1.2$ &594  & 1.94& 1.47 &  2.43 &  0& 99 & 2.43 & 1.07 &  2.66 & 0 & 100\\ 
\hline
  \multirow{3}{*}{$W_2$} & Strauss & $\gamma=0.2$ & 394& 0.36& 0.3 & 0.47 & 0 & 99 & 0.34 & 0.31 &0.46  & 3 & 100\\ 
   & Strauss & $\gamma=0.5$ & 538& 0.27& 0.23& 0.35 & 0&100 & 0.26 & 0.23& 0.35& 3&100 \\ 
  & Geyer  & $\gamma=1.5$ &2964 & 0.1 & 0.17 & 0.2& 0 & 100& 0.2 & 0.15 & 0.25 & 0 & 100\\ 
  %& Area  & $\eta=1.2$ & 2379 & 0.78& 0.66 & 1.02 & 0 & 100 & 1.04 & 0.51& 1.16  & 0 & 100\\
   \hline
  \multirow{3}{*}{$W_3$} & Strauss & $\gamma=0.2$ &1137&0.17&0.15&0.23&0&100&0.24&0.15&0.28&1&100 \\ 
   & Strauss & $\gamma=0.5$ &1485&0.13&0.11&0.17&0&100&0.2&0.13&0.24&1&100 \\ 
  & Geyer  & $\gamma=1.5$ &5623&0.07&0.09&0.11&0&100&0.17&0.07&0.18&0&100 \\ 
  %& Area  & $\eta=1.2$ &  & &  &   &  &  &  &  &   &  & \\
   \hline
 \end{tabular}
\end{table}

%SCAD 
\newpage

\setlength{\tabcolsep}{1pt}
\renewcommand{\arraystretch}{1.5}
\begin{table}[!ht]
\caption{Empirical prediction properties (Bias, SD, and RMSE) and empirical selection properties (TPR, and FPR in $\%$) based on 500 replications of Strauss and Geyer models using cBIC, and cERIC for the SCAD penalty function and the regularized pseudo-likelihood function. The mean number of points $n$ under each model is provided.}
\label{table:scad} 
\centering
%\begin{tabular}{c l c c | ccccc | ccccc}
\begin{tabular}{@{\extracolsep{1pt}}c l c c | ccccc | ccccc @{}}
\hline
\hline 
 \multicolumn{1}{c}{Spatial} & \multicolumn{1}{c}{Model} & \multicolumn{1}{c}{Interaction} & \multicolumn{1}{c}{Av. number} & \multicolumn{5}{c}{cBIC} & \multicolumn{5}{c}{cERIC} \\ 
 \cline{5-9} \cline{9-14}
domain &  & parameter & of points (n) & Bias & SD & RMSE & FPR & TPR & Bias & SD & RMSE & FPR & TPR  \\ 
  \hline
  \hline
  & \multicolumn{11}{c}{Scenario~\ref{sce1}}\\
\hline
  \multirow{3}{*}{$W_1$} & Strauss & $\gamma=0.2$ & 101 & 2.16 & 0.86 & 2.32 & 2 & 12 & 2.31 & 0.61 & 2.39 & 1 & 4\\ 
   & Strauss & $\gamma=0.5$ & 138 & 0.42 & 1.6 & 1.65 & 65 & 83 & 0.91 & 1.45 & 1.71 & 33 & 71\\ 
   & Geyer  & $\gamma=1.5$ & 750 & 0.23 & 0.6 &  0.64 & 81 & 100 & 0.29 & 0.71 & 0.77 & 68 & 100\\
   %& Area  & $\eta=1.2$ & 596 & 0.52& 0.76 & 0.92  & 90 & 100 & 0.52 & 0.74 & 0.9  & 84 & 100\\ 
\hline
  \multirow{3}{*}{$W_2$} & Strauss & $\gamma=0.2$ & 395 & 0.1 & 0.87 & 0.88 & 60& 98 & 2.16 & 0.55 & 2.23 &  1& 19\\ 
   & Strauss & $\gamma=0.5$ &  540& 0.06 & 0.8 & 0.8 & 79 & 99 & 0.95 &0.97 & 1.36 & 4 & 77\\ 
  & Geyer  & $\gamma=1.5$ & 2968& 0.29 & 0.25 & 0.38  & 34 & 100 & 0.3 & 0.21 &0.37  & 4 & 100\\ 
  %& Area  & $\eta=1.2$ &2377  & 0.43& 0.34 & 0.55  & 75 & 100 & 0.42 & 0.28 & 0.5  & 22 & 100\\
   \hline
  \multirow{3}{*}{$W_3$} & Strauss & $\gamma=0.2$ & 1137& 0.06& 0.36 &0.36  & 12& 100&0.12  &0.48  & 0.49 & 0 & 98\\ 
   & Strauss & $\gamma=0.5$ & 1484& 0.04& 0.35 & 0.35 &14 & 100& 0.08& 0.42 & 0.43& 1&100 \\ 
  & Geyer  & $\gamma=1.5$ & 5630& 0.27& 0.15 & 0.31 & 28& 100 & 0.27 & 0.11 & 0.29 & 1 & 100\\ 
  %& Area  & $\eta=1.2$ &  & &  &   &  &  &  &  &   &  & \\
   \hline
   & \multicolumn{11}{c}{Scenario~\ref{sce2}} \\
   \hline
   \multirow{3}{*}{$W_1$} & Strauss & $\gamma=0.2$ & 101& 0.9& 5.4& 5.47& 7 & 57 & 1.53 & 5.41 & 5.62& 7 & 35\\ 
   & Strauss & $\gamma=0.5$ & 138& 0.16& 0.65& 0.67  & 0& 93 & 0.75& 0.91& 1.18 & 0 & 75\\ 
   & Geyer  & $\gamma=1.5$ & 749& 0.28 & 0.34 & 0.44 & 0& 100 & 0.28 & 0.35& 0.45& 0 & 99\\ 
   %& Area  & $\eta=1.2$ & 594 & 0.51&  0.53&  0.74 &0  & 100 & 0.54 & 0.64 & 0.84  & 0 & 99\\
\hline
  \multirow{3}{*}{$W_2$} & Strauss & $\gamma=0.2$ & 394& 0.02& 0.25& 0.25 & 0& 100& 0.36 & 0.67 &0.76  & 0 & 80\\ 
   & Strauss & $\gamma=0.5$ & 538& 0.01& 0.17& 0.17 & 0& 100 & 0.03 & 0.19& 0.19& 0&100 \\ 
  & Geyer  & $\gamma=1.5$ & 2964 & 0.25 & 0.15 & 0.29& 0& 100 & 0.26 & 0.15 & 0.3 & 0& 100\\ 
  %& Area  & $\eta=1.2$ &2379 & 0.43& 0.26 & 0.5  & 0 & 100 & 0.44 & 0.26 & 0.51  & 0 & 100\\
   \hline
  \multirow{3}{*}{$W_3$} & Strauss & $\gamma=0.2$ & 1137&0.04&0.13&0.14&0&100&0.04&0.13&0.14&0&100 \\ 
   & Strauss & $\gamma=0.5$ &1485&0.01&0.1&0.1&0&100&0.02&0.1&0.1&0&100 \\ 
  & Geyer  & $\gamma=1.5$ & 5623&0.25&0.07&0.26&0&100&0.26&0.07&0.27 &0&100 \\ 
  %& Area  & $\eta=1.2$ &  & &  &   &  &  &  &  &   &  & \\
   \hline
 \end{tabular}
\end{table}

%MC+ 
\newpage

\setlength{\tabcolsep}{1pt}
\renewcommand{\arraystretch}{1.5}
\begin{table}[!ht]
\caption{Empirical prediction properties (Bias, SD, and RMSE) and empirical selection properties (TPR, and FPR in $\%$) based on 500 replications of Strauss and Geyer models using cBIC, and cERIC for the MC+ penalty function and the regularized pseudo-likelihood function. The mean number of points $n$ under each model is provided.}
\label{table:mcp} 
\centering
%\begin{tabular}{c l c c | ccccc | ccccc}
\begin{tabular}{@{\extracolsep{1pt}}c l c c | ccccc | ccccc @{}}
\hline
\hline 
 \multicolumn{1}{c}{Spatial} & \multicolumn{1}{c}{Model} & \multicolumn{1}{c}{Interaction} & \multicolumn{1}{c}{Av. number} & \multicolumn{5}{c}{cBIC} & \multicolumn{5}{c}{cERIC} \\ 
 \cline{5-9} \cline{9-14}
domain &  & parameter & of points (n) & Bias & SD & RMSE & FPR & TPR & Bias & SD & RMSE & FPR & TPR  \\ 
  \hline
  \hline
  & \multicolumn{11}{c}{Scenario~\ref{sce1}}\\
\hline
  \multirow{3}{*}{$W_1$} & Strauss & $\gamma=0.2$ & 101 & 2.1 & 0.92 & 2.29 & 2 & 11 & 2.28 & 0.61 & 2.36 & 1& 4\\ 
   & Strauss & $\gamma=0.5$ & 138 & 0.11 & 1.51 & 1.51 & 80 & 91 & 0.54 & 1.35 & 1.45 & 38 & 76\\ 
   & Geyer  & $\gamma=1.5$ & 750 & 0.21 & 0.56 &  0.6 & 82 & 100 & 0.22 & 0.53 & 0.57  & 70 & 100\\ 
   %& Area  & $\eta=1.2$ & 596& 0.52& 0.76 & 0.92  & 90 & 100 & 0.52 & 0.74 & 0.9  & 84 & 100\\
\hline
  \multirow{3}{*}{$W_2$} & Strauss & $\gamma=0.2$ &395  & 0.02 & 0.76 & 0.76  & 61 & 100 & 2.25 & 0.64 & 2.34 & 0 & 7\\ 
   & Strauss & $\gamma=0.5$ &540 & 0.02 & 0.72& 0.72  & 79& 100 & 0.12 & 0.33 &0.35  & 1 & 99\\ 
  & Geyer  & $\gamma=1.5$ & 2968& 0.26 & 0.24 & 0.35 & 46 & 100 & 0.25& 0.16 & 0.3& 3 & 100\\ 
  %& Area  & $\eta=1.2$ & 2377 & 0.43& 0.34 & 0.55  & 75 & 100 & 0.42 & 0.28 & 0.5  & 22 & 100\\
   \hline
  \multirow{3}{*}{$W_3$} & Strauss & $\gamma=0.2$ & 1137 & 0.02 & 0.23 & 0.23& 10& 100 & 0.03 &0.12  & 0.12 & 0 &100 \\ 
   & Strauss & $\gamma=0.5$ & 1484 & 0.01 & 0.23& 0.23 & 14 & 100 & 0.02& 0.09 & 0.09& 0 &100 \\ 
  & Geyer  & $\gamma=1.5$ & 5630& 0.25 & 0.13&  0.28& 30 & 100 & 0.26& 0.07 & 0.27 & 0 & 100\\ 
  %& Area  & $\eta=1.2$ &  & &  &   &  &  &  &  &   &  & \\
   \hline
   & \multicolumn{11}{c}{Scenario~\ref{sce2}} \\
   \hline
   \multirow{3}{*}{$W_1$} & Strauss & $\gamma=0.2$ & 101& 0.82& 6.1 & 6.15 & 9& 58 & 1.37 & 6.16 & 6.31 & 9 & 37\\ 
   & Strauss & $\gamma=0.5$ & 138& 0.1& 0.55 & 0.56 & 0 & 92 & 0.7 & 0.75 & 1.03 & 0 & 68\\ 
   & Geyer  & $\gamma=1.5$ & 749& 0.27& 0.32 & 0.42 & 0& 100 & 0.29& 0.38 & 0.48  & 0 & 99\\ 
   %& Area  & $\eta=1.2$ & 594 & 0.5&  0.52&  0.72 & 0 &  100& 0.52 & 0.55 & 0.76  &  0& 100\\
\hline
  \multirow{3}{*}{$W_2$} & Strauss & $\gamma=0.2$ & 394& 0.02& 0.24 & 0.24&0 & 100& 0.4 & 0.62& 0.74 & 0 & 77\\ 
   & Strauss & $\gamma=0.5$ & 538 & 0.01 & 0.17& 0.17  & 0 & 100 & 0.04 & 0.19 & 0.19 & 0 & 100\\ 
  & Geyer  & $\gamma=1.5$ & 2964& 0.25 & 0.15 & 0.29 & 0&100  & 0.26 & 0.15 &0.3 & 0& 100\\ 
  %& Area  & $\eta=1.2$ & 2379 & 0.43& 0.25 & 0.5  & 0 & 100 & 0.43 & 0.26 & 0.5  & 0 & 100\\
   \hline
  \multirow{3}{*}{$W_3$} & Strauss & $\gamma=0.2$ &1137&0.04&0.13&0.14&0&100&0.04&0.13&0.14&0&100 \\ 
   & Strauss & $\gamma=0.5$ &1485&0.01&0.1&0.1&0&100&0.02&0.1&0.1&0&100 \\ 
  & Geyer  & $\gamma=1.5$ & 5623&0.25&0.07&0.26&0&100&0.26&0.07&0.27&0&100 \\ 
  %& Area  & $\eta=1.2$ &  & &  &   &  &  &  &  &   &  & \\
   \hline
 \end{tabular}
\end{table}
%\end{comment}
\newpage

%\begin{supplement}
%\stitle{Supplement to ``Inference for possibly high-dimensional inhomogeneous Gibbs point processes"}
%\sdescription{The supplemental file accompanying this paper contains the proofs of all auxiliary lemmas as well as proofs of Theorems~\ref{THM:ROOT}-\ref{THM:SPARSITYCLT} and further material regarding the simulation study.}
%\end{supplement}

\bibliographystyle{imsart-nameyear}
\bibliography{SelectionGPP_arxiv2}
%\bibliography{refejsgibbs}

% \newpage

%\section*{Acknowledgements}

%\bibliographystyle{imsart-nameyear}
%\bibliography{refejsgibbs}

\end{document}